\documentclass[11pt]{amsart}
\usepackage{graphicx, hyperref, mathtools, amsfonts, amssymb, amsthm}
\usepackage[margin=0.75in]{geometry}
\usepackage{float}

\usepackage{tikz-cd}

\usepackage{subcaption}
\usepackage{float}
\usepackage[backend=biber, sorting=nyt]{biblatex}
\newtheorem*{remark}{Remark}
\newtheorem{prop}{Proposition}[section]
\newtheorem{theorem}{Theorem}[section]
\newtheorem{corollary}[theorem]{Corollary}
\newtheorem{lemma}[theorem]{Lemma}
\theoremstyle{definition}
\newtheorem{definition}{Definition}[section]
\newtheorem{example}{Example}[section]

\newcommand{\tsi}[0]{\textup{TSI}}
\newcommand{\var}[0]{\textup{Var}}

\title{The Topological Stability Index: A Variance-Based Measure for Persistence Barcodes}

\author{Joris Kirchner}
\author{Ioannis Diamantis}

\address{Department of Data Analytics and Digitalisation, 
Maastricht University, School of Business and Economics, 
P.O. Box 616, 6200 MD Maastricht, The Netherlands}

\email{joris.kirchner@maastrichtuniversity.nl}
\email{i.diamantis@maastrichtuniversity.nl}

\begin{document}

\begin{abstract}
We introduce the \emph{Topological Stability Index} (TSI), a variance-based scalar measure for persistence barcodes that quantifies the dispersion of persistence lifetimes. Unlike persistent entropy, which depends only on normalized weights, the TSI captures absolute variability and is sensitive to heterogeneous feature scales. We establish fundamental properties of the TSI, including its scaling behavior, invariance under lifetime translation and explicit update formulas under insertion and deletion of bars. We also consider a complementary first-moment-type quantity, the \emph{Topological Signal Index} (TSigI), which captures the typical scale of persistence lifetimes and provides additional interpretability alongside the TSI. We further introduce a normalized version, $cv\tsi$, which is scale invariant and admits an explicit algebraic relation to the Rényi entropy of order two. In particular, $cv\tsi$ is an affine function of the collision probability $\sum_i p_i^2$, and therefore a monotone reparametrization of the Rényi entropy, providing a direct link between variance-based and entropy-based summaries in topological data analysis. Numerical experiments on synthetic data and stochastic time series demonstrate that the TSI captures structural variability complementary to entropy: it is relatively insensitive to deterministic trends, while responding strongly to stochastic fluctuations and variations in persistence magnitude.
\end{abstract}

\begingroup
\renewcommand\thefootnote{}
\renewcommand\footnoterule{}
\footnotetext{\textbf{Keywords:} Topological Data Analysis, Persistent Homology, Topological Stability Index, Persistent Entropy.\\
\textbf{MSC 2020:} 62R40, 55N31, 68T09.}
\endgroup

\maketitle

\section{Introduction}

Topological Data Analysis (TDA) has emerged as a powerful framework for extracting geometric and structural information from complex data sets \cite{zomorodian_computing_2005}. At its core, TDA represents data (such as point clouds, time series, or networks) through a filtration, i.e., a family of nested topological spaces, and studies the evolution of topological features across scales using persistent homology. This approach is often motivated by the assumption that data exhibit an underlying geometric or topological structure, whose features persist across scales and encode meaningful information about the system under study.

A central object in TDA is the persistence diagram (or equivalently, the barcode), which records the birth and death times of topological features across the filtration. While persistence diagrams provide a rich and stable representation of topological structure, their integration with statistical methods remains a fundamental challenge \cite{cohen-steiner_stability_2007}. The space of persistence diagrams lacks a simple linear or convex structure, making it difficult to directly apply classical statistical tools \cite{Mileyko_2011}.

Several approaches have been developed to address this challenge. One direction focuses on performing statistical inference directly on persistence diagrams, including the construction of confidence regions and hypothesis testing procedures \cite{fasy_confidence_2014, robinson_hypothesis_2017}. More broadly, probabilistic and asymptotic frameworks have been proposed to study distributions of persistence diagrams \cite{mileyko_probability_2011, hiraoka_limit_2018}. While conceptually natural, these methods require substantial adaptations due to the geometric complexity of the diagram space.

A complementary line of work aims to improve robustness to noise and outliers by modifying the underlying filtration, for example through Distance-to-Measure (DTM) or kernel-based constructions \cite{chazal_robust_2018, anai_dtm-based_2020}. These approaches enhance stability properties of persistence diagrams, but typically introduce additional parameters and increased computational complexity.

A widely adopted alternative is to map persistence diagrams to different representations, enabling the use of standard statistical techniques \cite{ali_survey_2023}. Examples include functional summaries such as persistence landscapes \cite{bubenik_statistical_2015} and persistence curves \cite{chung_persistence_2022}, as well as scalar summaries. Among the latter, \emph{persistent entropy} \cite{rucco_characterisation_2016} has gained particular popularity. By applying Shannon entropy to the normalized distribution of persistence lifetimes, it provides a scale-invariant measure of the relative distribution of topological features and has been shown to be stable under perturbations \cite{atienza_persistent_2019, atienza_stability_2020}.

Despite these advances, an important aspect of persistence information remains less explored: the \emph{absolute dispersion} of persistence lifetimes. Entropy-based summaries depend only on normalized weights and therefore do not capture differences in scale or variability across barcodes. In many applications, however, such absolute differences are meaningful, as they reflect heterogeneity in persistence lifetimes and variations in structural complexity.

In this paper, we introduce the \emph{Topological Stability Index} (TSI), a variance-based scalar measure for persistence barcodes that quantifies the dispersion of persistence lifetimes. Complementing this, we consider a first-moment-type quantity, the \emph{Topological Signal Index} (TSigI), which captures the typical scale of persistence lifetimes. Together, these quantities provide a simple two-dimensional summary of persistence barcodes, encoding both magnitude and variability. The TSI was originally introduced in \cite{diamantis_shape_2025} as a conceptual tool; here, we develop it into a rigorous mathematical object and systematically investigate its properties. In particular, we establish its scaling behavior, invariance under lifetime translation, and explicit update formulas under insertion and deletion of bars.

To bridge the gap between variance-based and entropy-based summaries, we further introduce a normalized version of the TSI, denoted $cv\tsi$. We show that this quantity is scale invariant and admits an explicit algebraic relation to the Rényi entropy of order two. In particular, $cv\tsi$ is an affine function of the collision probability $\sum_i p_i^2$, and therefore a monotone reparametrization of the Rényi entropy. This connection provides a unified perspective on scalar summaries in TDA, linking dispersion-based and information-theoretic approaches.

Finally, we illustrate the behavior of the TSI and its normalized counterpart through numerical experiments on synthetic geometric data and stochastic time series. These experiments demonstrate that the TSI captures structural variability complementary to entropy: it is relatively insensitive to deterministic trends, while responding strongly to stochastic fluctuations and variations in persistence magnitude.\footnote{All experiments are fully reproducible; code is available at \url{https://github.com/siroj99/TSI/}.}

\bigbreak 

The rest of the paper is organized as follows. In Section~\ref{Sec:The Topological Stability Index}, we introduce the Topological Stability Index and establish its main properties. Section~\ref{Sec:Relation to Persistence Entropy} develops the normalized version and its connection to Rényi entropy. In Section~\ref{Sec:Numerical Experiments and Applications}, we present numerical experiments and applications, illustrating the behavior of the proposed summaries on geometric and stochastic data. Finally, Section~\ref{Sec:Discussion and Future Work} discusses implications, limitations, and directions for future research.

\section{The Topological Stability Index}\label{Sec:The Topological Stability Index}

Persistence barcodes provide a rich representation of topological features across scales, but their direct comparison and interpretation often require suitable summary statistics. Several scalar descriptors have been proposed in the literature, many of which are based on normalized quantities and therefore primarily capture relative information about the distribution of persistence lifetimes.

In this section, we introduce the \emph{Topological Stability Index} (TSI), a simple yet expressive statistic defined as the variance of persistence lifetimes. Unlike entropy-based summaries, the TSI retains information about absolute scale and dispersion, making it sensitive to structural heterogeneity in the barcode. We develop its basic properties, analyze its behavior under fundamental transformations, and study its response to perturbations such as the addition or removal of bars.

\subsection{Definition and Interpretation}

In this subsection we recall the definition of the \emph{Topological Stability Index} (TSI), a scalar summary of a persistence barcode defined in terms of the dispersion of its lifetimes.

\begin{definition}
Let $B=\{[b_i,d_i)\}_{i=1}^{n^B}$ be a persistence barcode, and let
\[
\mathcal{L}_B=\{\ell_i=d_i-b_i\}_{i=1}^{n^B}
\]
denote the multiset of bar lengths. The \emph{Topological Stability Index} of $B$ is defined by
\begin{equation}
    \tsi(B):=\var(\mathcal{L}_B),
\end{equation}
where $\var$ denotes the (unbiased) sample variance of the multiset of lifetimes.
\end{definition}

Thus, the TSI measures the dispersion of persistence lifetimes. Small values correspond to barcodes whose lifetimes are nearly uniform, whereas large values indicate stronger heterogeneity, for example due to the presence of dominant or highly variable topological features.

\begin{remark}\rm 
Throughout this paper, we use the unbiased sample variance. Hence, for a barcode $B$ with $n^B\geq 2$ bars,
\begin{equation}
    \tsi(B)=\frac{1}{n^B-1}\sum_{i=1}^{n^B}\left(\ell_i-\frac{1}{n^B}L^B\right)^2,
\end{equation}
where
\[
L^B:=\sum_{i=1}^{n^B}\ell_i
\]
denotes the total persistence. Equivalently,
\begin{equation}
    \tsi(B)=\frac{1}{n^B-1}\left(\sum_{i=1}^{n^B}\ell_i^2-\frac{(L^B)^2}{n^B}\right).
\end{equation}

For convenience, we set $\tsi(B)=0$ when $n^B\leq 1$, including the empty barcode.
\end{remark}



\begin{example}
Consider two barcodes $B_1$ and $B_2$ with lifetime multisets
\[
\mathcal{L}_{B_1}=\{1,1,1,1\},
\qquad
\mathcal{L}_{B_2}=\{0,0,0,4\}.
\]
Both barcodes have the same total persistence,
\[
L^{B_1}=L^{B_2}=4,
\]
but their TSI values are very different. Indeed,
\[
\tsi(B_1)=0,
\]
since all bars in $B_1$ have the same length, while
\[
\tsi(B_2)=4.
\]
Thus, although the two barcodes have the same total persistence, the TSI distinguishes between a perfectly uniform distribution of lifetimes and a highly uneven one. This illustrates that the TSI captures dispersion rather than magnitude alone.
\end{example}

This contrast is illustrated schematically in Figure~\ref{fig:tsi_same_total_persistence}, where two barcodes with identical total persistence exhibit markedly different distributions of lifetimes. The figure emphasizes how concentration of persistence in a few dominant bars leads to a larger TSI compared to a more uniform distribution.

\begin{figure}[H]
\centering
\begin{subfigure}[t]{0.5\textwidth}
        \centering
        \includegraphics[height=1.5in]{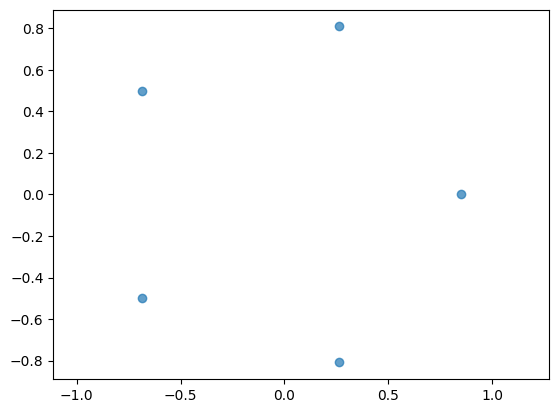}
        \caption{Point cloud.}
    \end{subfigure}
    ~
    \begin{subfigure}[t]{0.3\textwidth}
        \centering
        \includegraphics[height=1.5in]{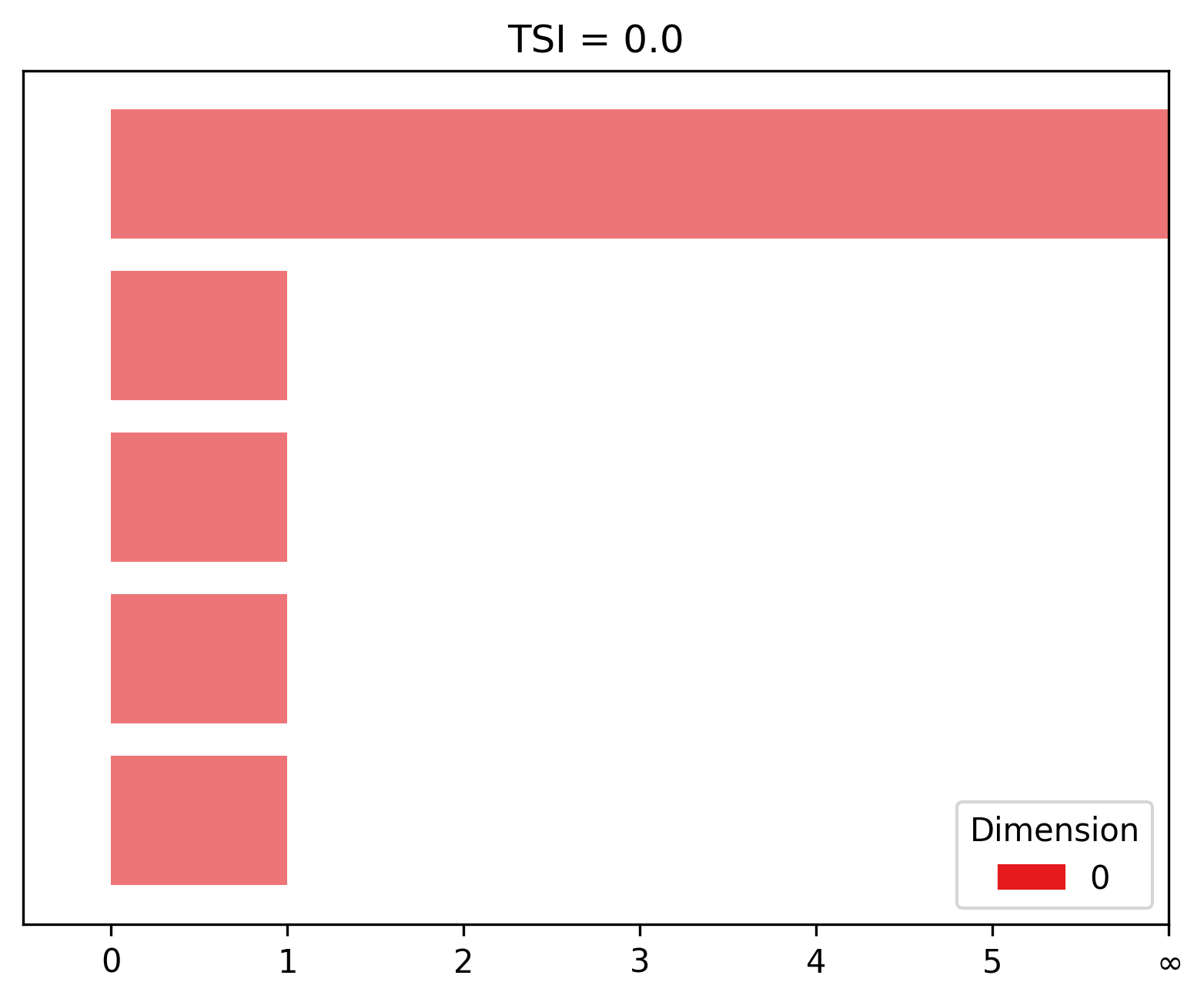}
        \caption{Persistence barcode.}
    \end{subfigure}
    
    \begin{subfigure}[t]{0.5\textwidth}
        \centering
        \includegraphics[height=1.5in]{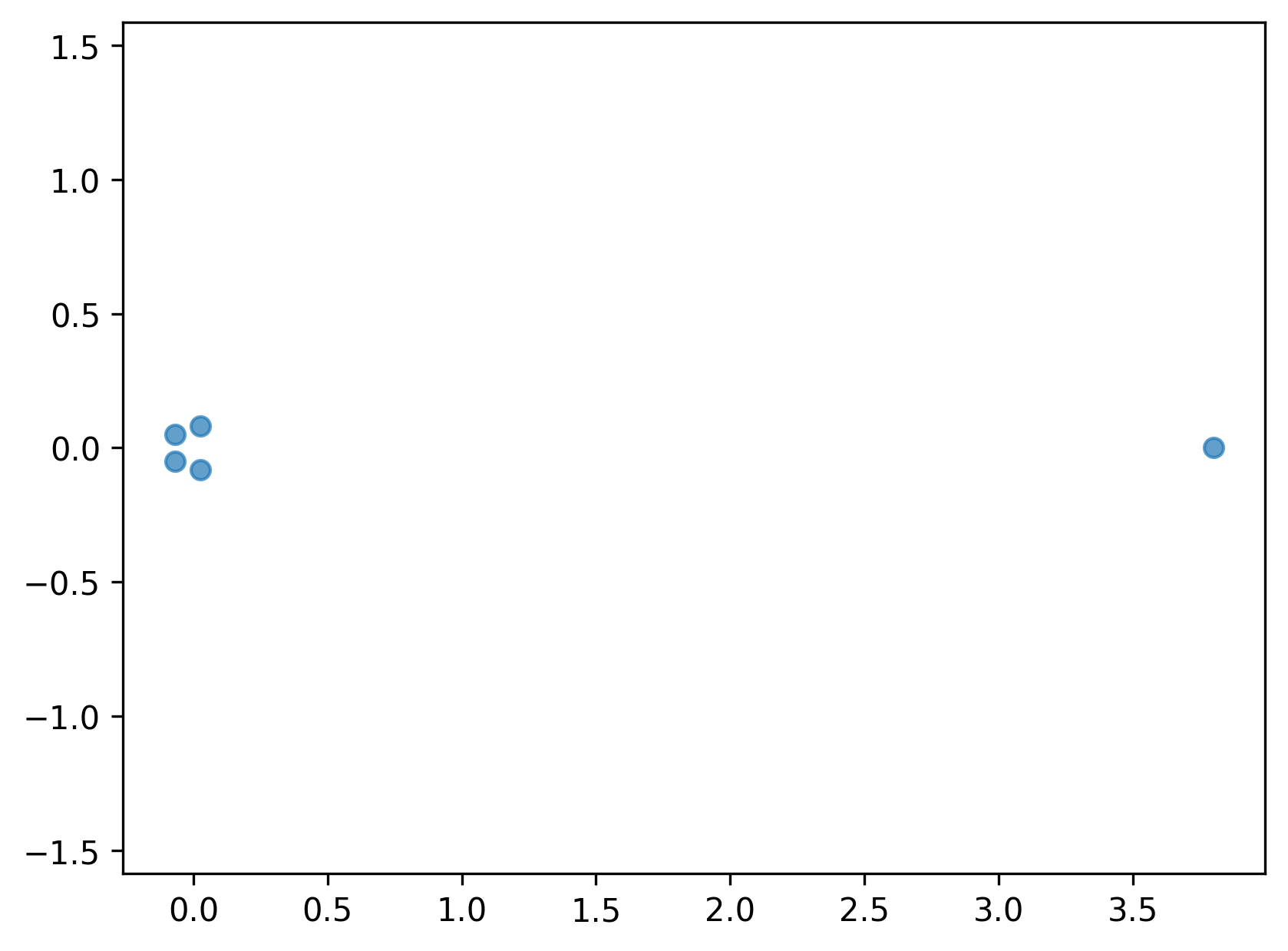}
        \caption{Point cloud.}
    \end{subfigure}
    ~
    \begin{subfigure}[t]{0.3\textwidth}
        \centering
        \includegraphics[height=1.5in]{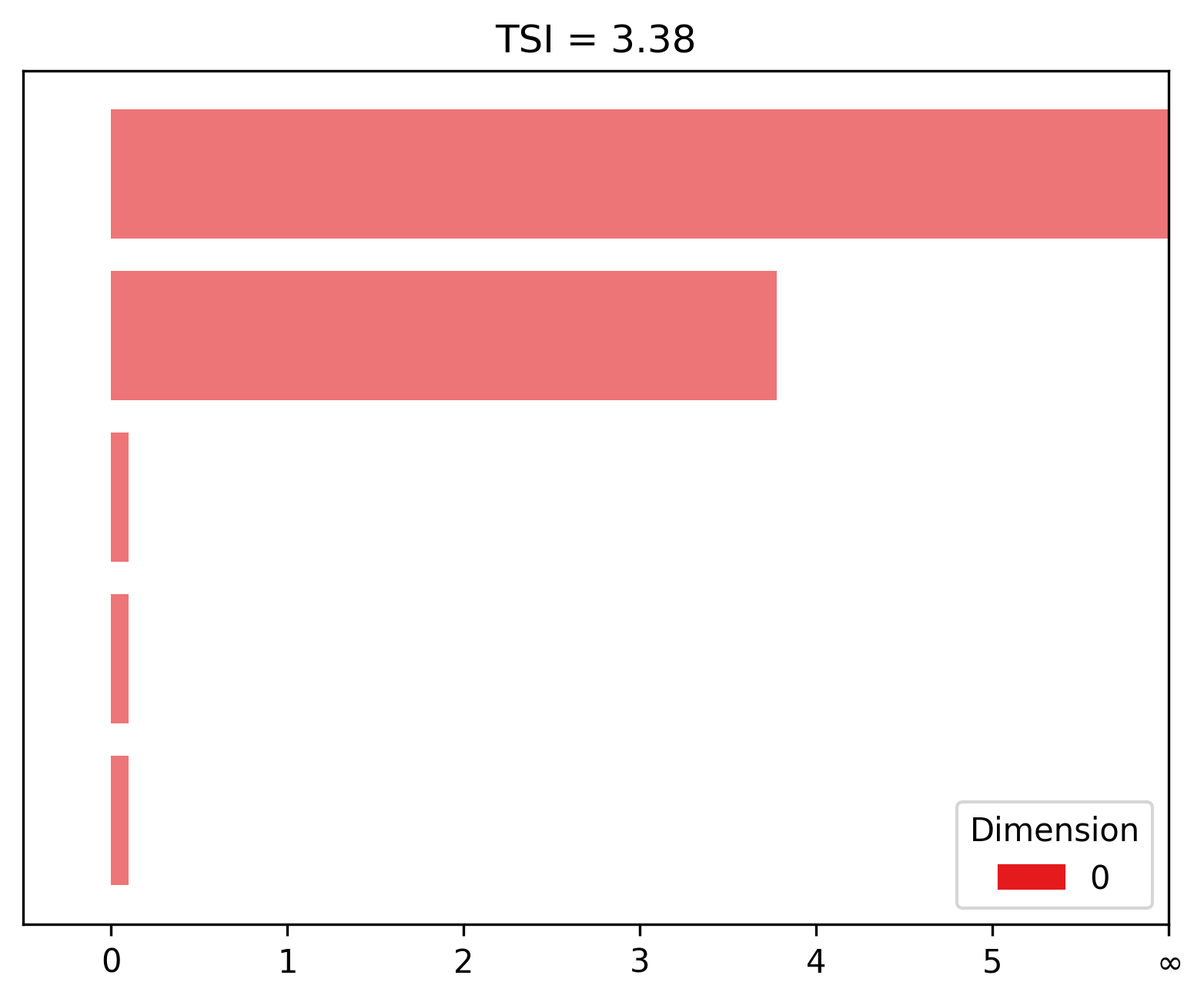}
        \caption{Persistence barcode.}
    \end{subfigure}
\caption{Two persistence barcodes with identical total persistence but different distributions of lifetimes. The top barcode exhibits moderate variability, with one long bar and several shorter ones, whereas the bottom barcode shows a highly uneven distribution, concentrating persistence in a small number of longer bars while the remaining bars are significantly shorter. This highlights that the TSI measures dispersion rather than total persistence.}
\label{fig:tsi_same_total_persistence}
\end{figure}

We now allow barcodes to contain zero-length intervals for the purpose of extremal statements. We have:

\begin{theorem}[Extremal characterization of the TSI]\label{thm:tsi_extremal}
Let $B=\{[b_i,d_i)\}_{i=1}^{n}$ be a barcode with $n\geq 2$ bars and total persistence
\[
L^B=\sum_{i=1}^n \ell_i,
\qquad \ell_i=d_i-b_i\geq 0.
\]
Assume that both $n$ and $L^B$ are fixed.

Then the Topological Stability Index satisfies
\begin{equation}\label{eq:tsi_extremal_bounds}
0\leq \tsi(B)\leq \frac{(L^B)^2}{n}.
\end{equation}
Moreover:
\begin{enumerate}
    \item $\tsi(B)=0$ if and only if all bar lengths are equal, i.e.
    \[
    \ell_1=\cdots=\ell_n=\frac{L^B}{n}.
    \]
    \item $\tsi(B)=\frac{(L^B)^2}{n}$ if and only if, up to permutation of the bars,
    \[
    (\ell_1,\ldots,\ell_n)=(L^B,0,\ldots,0).
    \]
\end{enumerate}
In particular, among all barcodes with fixed number of bars and fixed total persistence, the TSI is minimized by the most uniform barcode and maximized by the most concentrated one.
\end{theorem}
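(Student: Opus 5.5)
The plan is to work entirely with the second expression for the TSI from the Remark,
\[
\tsi(B)=\frac{1}{n-1}\left(\sum_{i=1}^{n}\ell_i^2-\frac{(L^B)^2}{n}\right),
\]
together with the deviation form $\tsi(B)=\frac{1}{n-1}\sum_{i}(\ell_i-L^B/n)^2$. Since $n$ and $L^B$ are fixed, $\tsi(B)$ is an increasing affine function of the single quantity $S:=\sum_i \ell_i^2$. Both extremal statements therefore reduce to bounding $S$ over the simplex $\{\ell_i\geq 0,\ \sum_i\ell_i=L^B\}$ and characterizing when the bounds are attained.

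For the lower bound and item (1), I would use the deviation form. It is a sum of squares divided by $n-1>0$, so $\tsi(B)\geq 0$. Equality holds exactly when every deviation $\ell_i-L^B/n$ vanishes, that is, when $\ell_1=\cdots=\ell_n=L^B/n$. This is the same as Cauchy--Schwarz giving $S\geq (L^B)^2/n$, and no further work is needed here.

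For the upper bound and item (2), I would expand
\[
(L^B)^2=\Big(\sum_i\ell_i\Big)^2=S+2\sum_{i<j}\ell_i\ell_j .
\]
Nonnegativity of the lifetimes makes every cross term $\ell_i\ell_j\geq 0$, so $S\leq (L^B)^2$. Substituting this gives
\[
\tsi(B)\leq \frac{1}{n-1}\cdot (L^B)^2\Big(1-\frac1n\Big)=\frac{(L^B)^2}{n}.
\]
Equality holds iff $\sum_{i<j}\ell_i\ell_j=0$. Because the terms are nonnegative, this happens iff $\ell_i\ell_j=0$ for all $i\neq j$, i.e.\ at most one lifetime is nonzero. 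Combined with $\sum_i\ell_i=L^B$, that nonzero lifetime must equal $L^B$, which gives $(L^B,0,\ldots,0)$ up to permutation. Conversely, a direct check shows this configuration attains the bound.

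The only point requiring care, and the step I would flag as the potential obstacle, is the equality characterization in the degenerate case $L^B=0$. There all lifetimes vanish, both bounds collapse to $0$, and items (1) and (2) describe the same barcode, so the two "if and only if" statements remain consistent. This case is also exactly where the hypothesis $\ell_i\geq 0$, made possible by allowing zero-length bars, is essential: without nonnegativity the cross terms could be negative and the upper bound would fail. The closing sentence of the theorem then follows immediately, since monotonicity in $S$ identifies the uniform barcode as the minimizer and the concentrated one as the maximizer.
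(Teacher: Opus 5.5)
Your proof is correct and follows essentially the paper's route: for fixed $n$ and $L^B$ you reduce to bounding $\sum_i \ell_i^2$, get the lower bound from the sum-of-squares form (equivalent to the paper's Cauchy--Schwarz step), and get the upper bound from $\sum_i\ell_i^2\le (L^B)^2$. Your cross-term expansion rests on the same identity $(L^B)^2-\sum_i\ell_i^2=\sum_i\ell_i(L^B-\ell_i)=2\sum_{i<j}\ell_i\ell_j$ that the paper applies termwise as $\ell_i^2\le L^B\ell_i$, and your equality analysis is sound, including the degenerate case $L^B=0$, which the paper's ``exactly one index'' phrasing glosses over.
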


\begin{proof}
Let $n=n^B$ and $L=L^B$. Since the TSI is defined using the unbiased sample variance, we have
\begin{equation}\label{eq:tsi_second_moment_form}
\tsi(B)=\frac{1}{n-1}\left(\sum_{i=1}^n \ell_i^2-\frac{L^2}{n}\right).
\end{equation}
Thus, for fixed $n$ and $L$, the behavior of $\tsi(B)$ is completely determined by the quantity
\[
\sum_{i=1}^n \ell_i^2.
\]

We first determine its minimum under the constraints
\[
\ell_i\geq 0,
\qquad
\sum_{i=1}^n \ell_i=L.
\]

By the Cauchy-Schwarz inequality,
\[
\left(\sum_{i=1}^n \ell_i\right)^2 \leq n\sum_{i=1}^n \ell_i^2.
\]
Since $\sum_i \ell_i=L$, this yields
\[
L^2\leq n\sum_{i=1}^n \ell_i^2,
\]
and therefore
\[
\sum_{i=1}^n \ell_i^2\geq \frac{L^2}{n}.
\]
Substituting into \eqref{eq:tsi_second_moment_form}, we obtain
\[
\tsi(B)\geq \frac{1}{n-1}\left(\frac{L^2}{n}-\frac{L^2}{n}\right)=0.
\]
Hence $\tsi(B)\geq 0$.

Equality holds if and only if equality holds in Cauchy--Schwarz, which happens precisely when all $\ell_i$ are equal. Since their sum is $L$, this means
\[
\ell_1=\cdots=\ell_n=\frac{L}{n}.
\]
This proves the first statement.

We now determine the maximum of $\sum_i \ell_i^2$ subject to the same constraints. Since $\ell_i\geq 0$ and $\sum_i \ell_i=L$, we have
\[
\ell_i^2\leq L\ell_i
\qquad \text{for each } i,
\]
because $0\leq \ell_i\leq L$. Summing over $i$ gives
\[
\sum_{i=1}^n \ell_i^2 \leq L\sum_{i=1}^n \ell_i=L^2.
\]
Substituting this into \eqref{eq:tsi_second_moment_form}, we obtain
\[
\tsi(B)\leq \frac{1}{n-1}\left(L^2-\frac{L^2}{n}\right)
=\frac{1}{n-1}\cdot \frac{n-1}{n}L^2
=\frac{L^2}{n}.
\]
This proves the upper bound in \eqref{eq:tsi_extremal_bounds}.

It remains to characterize when equality holds. Equality in
\[
\sum_{i=1}^n \ell_i^2 \leq L^2
\]
holds if and only if equality holds in $\ell_i^2\leq L\ell_i$ for every $i$. Since
\[
\ell_i^2=L\ell_i
\quad \Longleftrightarrow \quad
\ell_i(\ell_i-L)=0,
\]
each $\ell_i$ must be either $0$ or $L$. Because the sum of all bar lengths is exactly $L$, there can be exactly one index $j$ such that $\ell_j=L$, while all other bar lengths must be zero. Hence, up to permutation,
\[
(\ell_1,\ldots,\ell_n)=(L,0,\ldots,0).
\]
This proves the second statement.

Therefore, for fixed $n^B$ and $L^B$, the TSI is minimized by the most uniform barcode and maximized by the most concentrated barcode.
\end{proof}

The extremal configurations described in Theorem~\ref{thm:tsi_extremal} are illustrated in Figure~\ref{fig:tsi_extremal_configurations}. 
The left panel shows the uniform distribution of lifetimes, which minimizes the TSI, while the right panel shows the fully concentrated configuration, which maximizes it.

\begin{figure}[H]
\centering
\begin{subfigure}[t]{0.5\textwidth}
        \centering
        \includegraphics[height=2in]{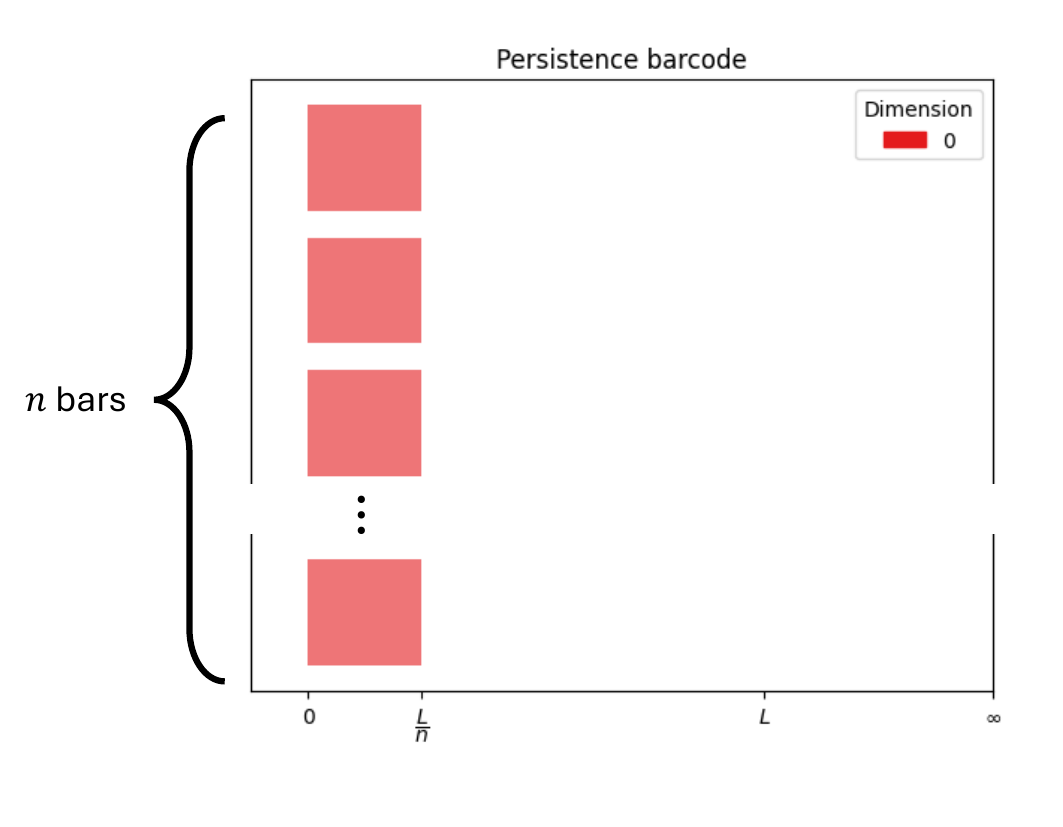}
        \caption{Barcode where $\tsi = 0$.}
    \end{subfigure}
    ~
    \begin{subfigure}[t]{0.5\textwidth}
        \centering
        \includegraphics[height=2in]{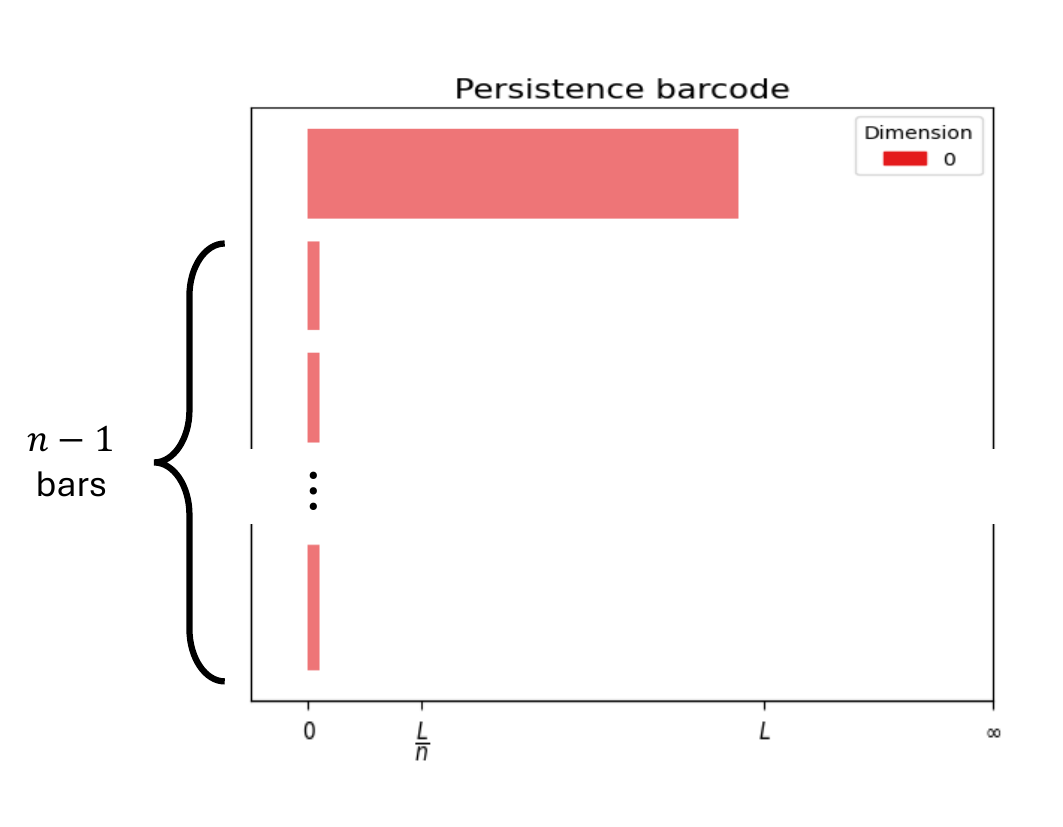}
        \caption{Barcode where TSI is maximal.}
    \end{subfigure}
\caption{Extremal configurations for the TSI under fixed total persistence $L$ and number of bars $n$. A barcode with equal bar lengths $\ell_i = L/n$ minimizes the TSI (left), while a barcode in which one bar has length $L$ and the remaining $n-1$ bars have length zero (shown schematically) maximizes the TSI (right).}
\label{fig:tsi_extremal_configurations}
\end{figure}

\subsection{Behaviour under Scaling and Translation}

We now examine how TSI behaves under simple transformations of the barcode and establish several basic structural properties.

We first consider the effect of scaling all filtration values.

\begin{prop}[Scaling]\label{prop:scalar_mult_tsi}
Let $B$ be a barcode and $c\geq 0$. Define
\[
cB=\{[cb,cd):[b,d)\in B\}.
\]
Then
\begin{equation}
\tsi(cB)=c^2\tsi(B).
\end{equation} 
\end{prop}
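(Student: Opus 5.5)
The plan is to reduce the statement to the elementary behaviour of the sample variance under multiplication of the data by a constant. First, I would note that scaling acts bar by bar: the bar $[b_i,d_i)\in B$ goes to $[cb_i,cd_i)\in cB$. Its lifetime is therefore
\[
cd_i-cb_i=c\,\ell_i .
\]
So the lifetime multiset of $cB$ is $\mathcal{L}_{cB}=\{c\ell_i\}_{i=1}^{n^B}$, with the same multiplicities. In particular $n^{cB}=n^B$ and $L^{cB}=cL^B$.

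One point needs care. When $c=0$ (or when distinct bars collide), the scaled bars may coincide or degenerate to zero length. I would regard $cB$ as a multiset indexed by the same index set, so no bars are merged or discarded. This is consistent with the convention just adopted before Theorem~\ref{thm:tsi_extremal} that zero-length intervals are allowed.

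Next I would split into cases according to $n^B$. If $n^B\le 1$, then $n^{cB}\le 1$ as well, so by the convention in the Remark both sides equal $0$ and the identity holds trivially. If $n^B\ge 2$, I would substitute into the second-moment formula for the TSI:
\[
\tsi(cB)=\frac{1}{n^B-1}\left(\sum_{i=1}^{n^B} c^2\ell_i^2-\frac{(cL^B)^2}{n^B}\right)=c^2\,\tsi(B).
\]
One could equally use the centred form: the mean also scales by $c$, so each squared deviation picks up a factor $c^2$.

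There is no real obstacle here. The only step requiring attention is the bookkeeping just described, namely that $cB$ is the multiset image of $B$ bar by bar, so the number of bars is preserved even when $c=0$. In the case $c=0$ the formula gives $\tsi(0B)=0$, which agrees with all lifetimes being $0$, as in Theorem~\ref{thm:tsi_extremal}(1).
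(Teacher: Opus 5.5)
Your proposal is correct and follows essentially the same route as the paper, which substitutes $c\ell_i$ and $cL^B/n^B$ into the centred form of the unbiased variance and pulls out the factor $c^2$; your use of the equivalent second-moment form changes nothing of substance. Your explicit treatment of the cases $n^B\le 1$ and $c=0$ (keeping $cB$ as a multiset with the same number of bars) is bookkeeping the paper leaves implicit.
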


\begin{proof}
Using the explicit expression for the sample variance, we obtain
\begin{align*}
\tsi(cB)
&=\frac{1}{n^B-1}\sum_{i=1}^{n^B}\left(c\ell_i-\frac{c}{n^B}L^B\right)^2\\
&=c^2\frac{1}{n^B-1}\sum_{i=1}^{n^B}\left(\ell_i-\frac{1}{n^B}L^B\right)^2
=c^2\tsi(B).
\end{align*}
\end{proof}

While translating the underlying point cloud does not affect persistence diagrams arising from distance-based filtrations, it is natural to consider the effect of translating persistence lifetimes directly. More precisely, consider the transformation that shifts all death times by a constant while keeping birth times fixed (or equivalently shifts all lifetimes by a constant). Such operations arise, for example, when comparing filtrations that differ by a uniform offset. In general, barcode summaries are not invariant under such transformations. However, the TSI remains unchanged, as shown in the following proposition.

\begin{prop}[Translation of lifetimes (death-shift invariance)]\label{prop:length_translate_tsi}

Let $B$ be a barcode and let $c\in\mathbb{R}$ satisfy $c>-\min_i \ell_i$. Then
\begin{equation}
\tsi(\{[b,d+c):[b,d)\in B\})=\tsi(B).
\end{equation}
\end{prop}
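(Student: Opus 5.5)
The plan is to reduce the statement to the translation invariance of the sample variance. Write $B'=\{[b_i,d_i+c)\}_{i=1}^{n^B}$. The first step is to identify the lifetime multiset of $B'$. Each bar $[b_i,d_i+c)$ has length $\ell_i' = d_i+c-b_i = \ell_i + c$, so $\mathcal{L}_{B'}=\{\ell_i+c\}_{i=1}^{n^B}$.

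The hypothesis $c>-\min_i \ell_i$ is used only here. It guarantees $\ell_i+c>0$ for every $i$, so that each $[b_i,d_i+c)$ is a genuine interval with $d_i+c>b_i$. It also guarantees that $B'$ has the same number of bars, $n^{B'}=n^B$, with none of them degenerating or disappearing.

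The case $n^B\le 1$ is handled first. Then $n^{B'}\le 1$ as well, and both sides equal $0$ by the convention in the Remark.

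For $n^B\ge 2$, the total persistence of $B'$ is $L^{B'}=L^B+n^B c$. Hence the mean lifetime shifts by exactly $c$:
\[
\frac{L^{B'}}{n^{B}}=\frac{L^B}{n^B}+c.
\]
Substituting into the centered form of the TSI from the Remark, each deviation is unchanged:
\[
\ell_i+c-\Bigl(\frac{L^B}{n^B}+c\Bigr)=\ell_i-\frac{L^B}{n^B}.
\]
The sum of squared deviations divided by $n^B-1$ is therefore identical for $B'$ and $B$, which gives the claim. This parallels the computation in Proposition~\ref{prop:scalar_mult_tsi}, with an additive shift in place of a multiplicative one.

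There is no substantial obstacle. The only point needing care is the bookkeeping above: the hypothesis on $c$ is what ensures that $B'$ is a well-defined barcode with the same number of bars, so that the normalizing factor $1/(n^B-1)$ and the convention for small $n$ apply identically to both sides.
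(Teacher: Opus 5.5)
Your proof is correct and is essentially the paper's argument: the paper likewise substitutes the shifted lifetimes $\ell_i+c$ and the shifted mean $\frac{1}{n^B}(L^B+n^Bc)$ into the centered form of the sample variance and observes that every deviation is unchanged. Your additional bookkeeping is a harmless refinement that the paper leaves implicit; it covers the case $n^B\le 1$ and the role of $c>-\min_i\ell_i$ in keeping all $n^B$ bars nondegenerate.
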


\begin{proof}
We compute
\begin{align*}
\tsi(\{[b,d+c)\})
&=\frac{1}{n^B-1}\sum_{i=1}^{n^B}\left(\ell_i+c-\frac{1}{n^B}(L^B+n^Bc)\right)^2\\
&=\frac{1}{n^B-1}\sum_{i=1}^{n^B}\left(\ell_i-\frac{1}{n^B}L^B\right)^2
=\tsi(B).
\end{align*}
\end{proof}

\begin{remark}\rm 
The transformation in Proposition \ref{prop:length_translate_tsi} corresponds to a vertical shift in the persistence diagram and to a uniform translation in lifetime space. Moreover, Propositions \ref{prop:scalar_mult_tsi} and \ref{prop:length_translate_tsi} show that the TSI is invariant under translations of lifetimes and homogeneous of degree two under uniform scaling. In particular, the TSI depends only on deviations of lifetimes from their mean and scales quadratically under uniform rescaling. These effects are illustrated in Figure~\ref{fig:tsi_scaling_translation}, where scaling increases the TSI while a uniform shift leaves it unchanged.
\end{remark}

\begin{figure}[htbp]
\centering
\begin{subfigure}[t]{0.33\textwidth}
    \centering
    \includegraphics[height=1.75in]{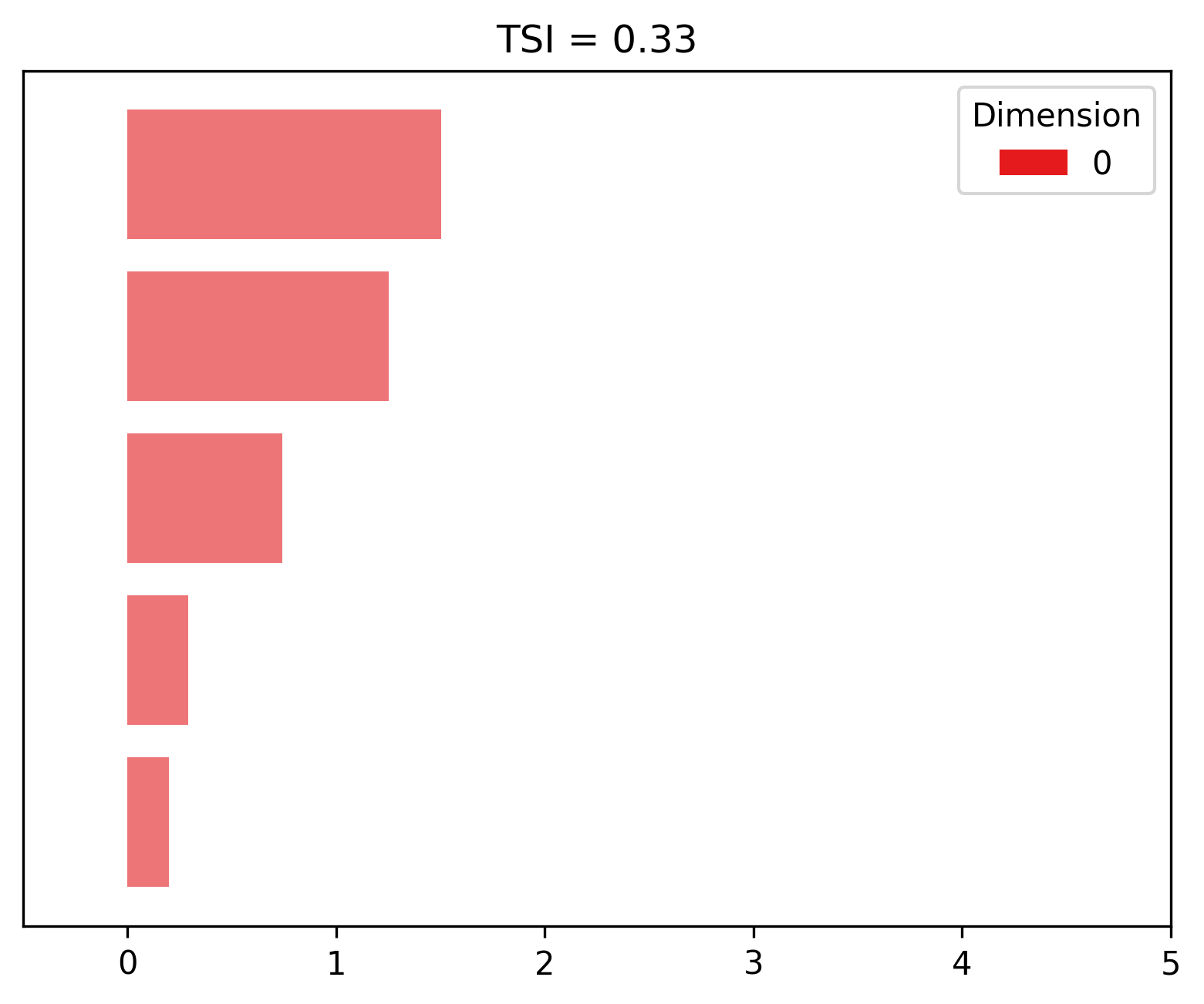}
    \caption{Example of a barcode.}
\end{subfigure}
~
\begin{subfigure}[t]{0.33\textwidth}
    \centering
    \includegraphics[height=1.75in]{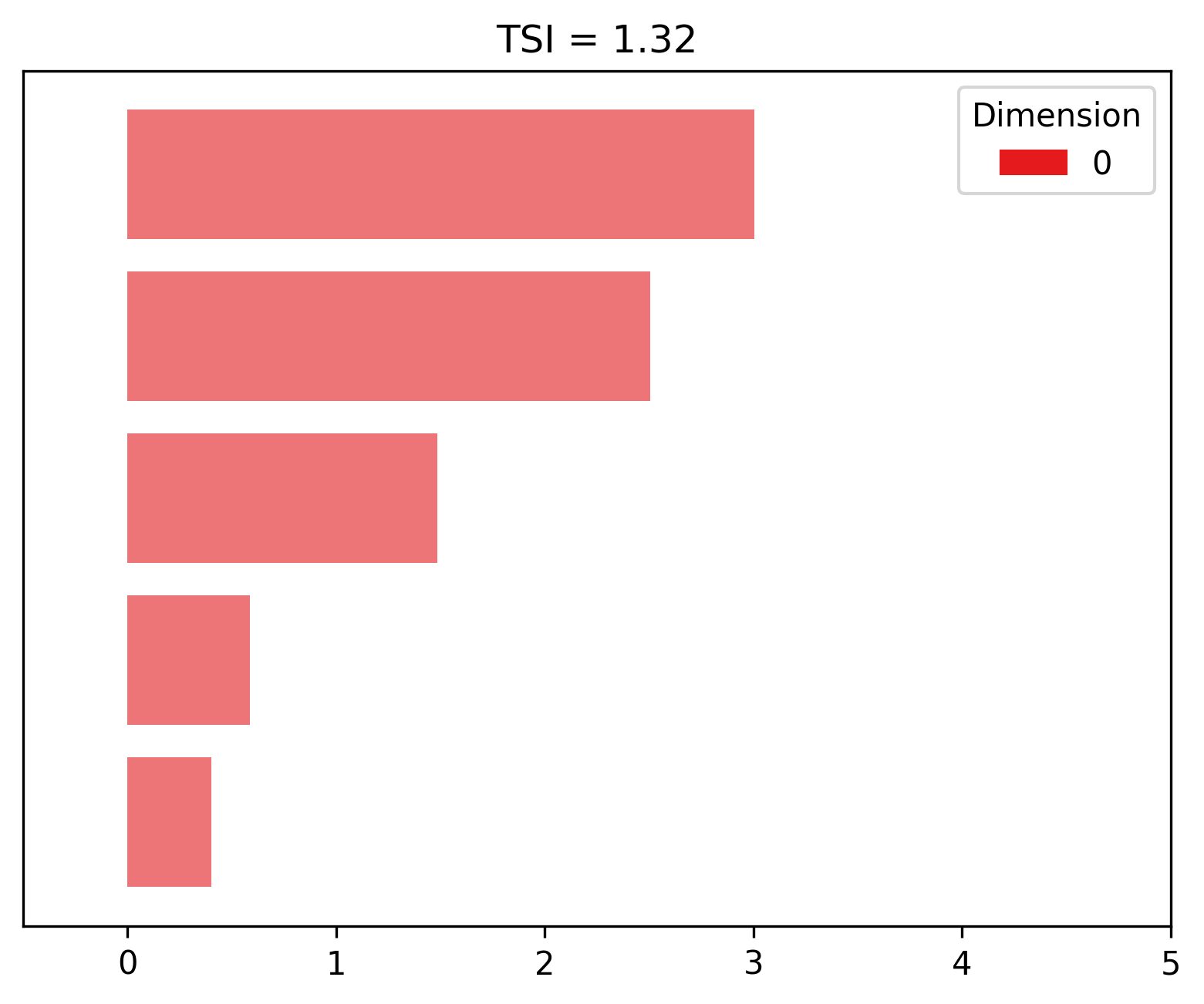}
    \caption{Barcode scaled by 2.}
\end{subfigure}
~
\begin{subfigure}[t]{0.33\textwidth}
    \centering
    \includegraphics[height=1.75in]{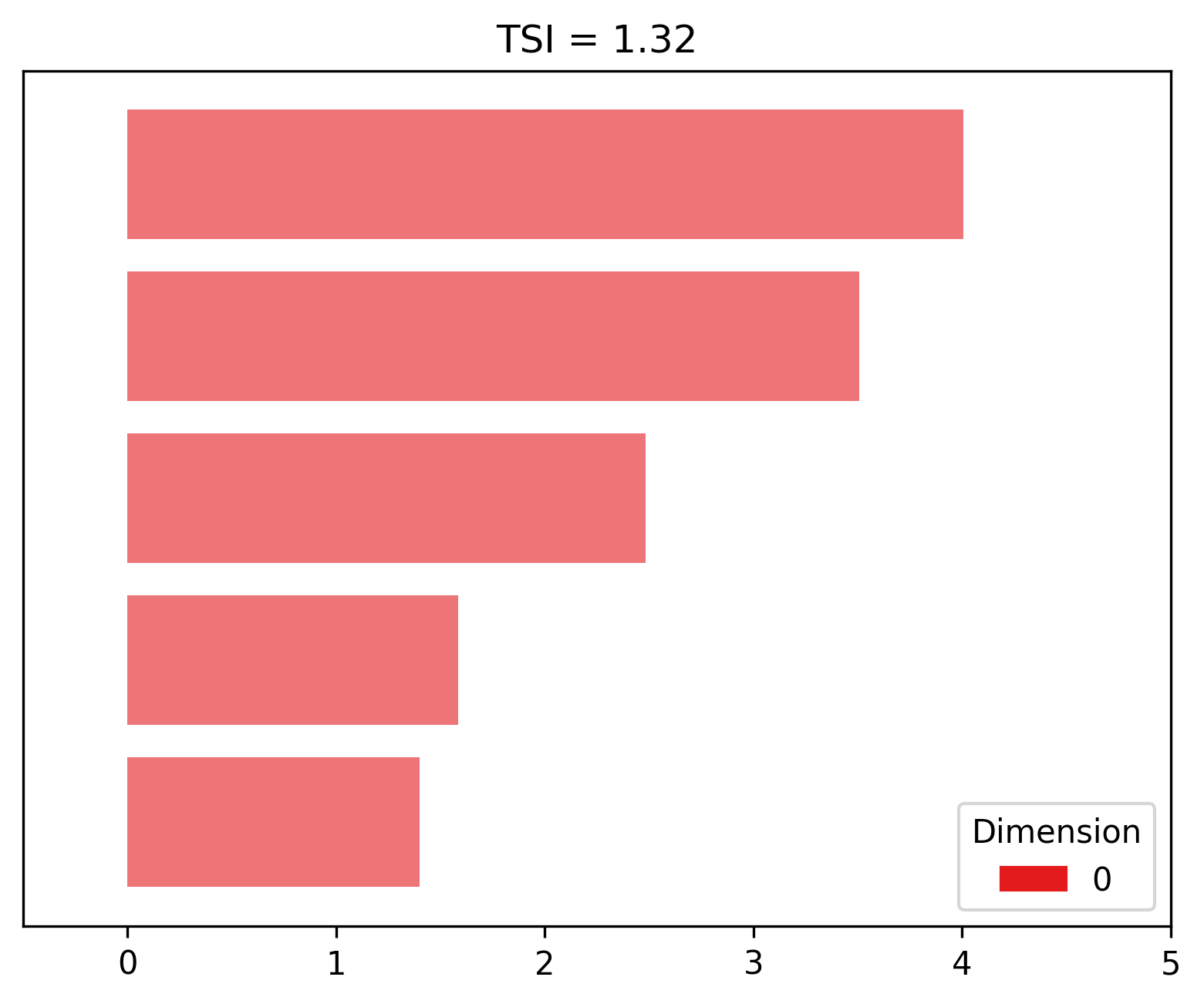}
    \caption{Death times increased by 1.}
\end{subfigure}

\caption{Effect of uniform scaling and death shifts on a barcode. Uniform scaling multiplies all lifetimes by the same factor and causes the TSI to scale quadratically, whereas shifting all death times by a constant leaves the TSI unchanged.}
\label{fig:tsi_scaling_translation}
\end{figure}

\subsection{Addition and Removal of Bars}

We now describe how the TSI changes under insertion or deletion of a bar. To do this, we need the following lemma.

\begin{lemma}\label{lemma:1_changed_bar}
Let $B$ be a barcode with $n^B\geq 2$ bars, and let
\[
\bar{\ell}_B:=\frac{L^B}{n^B}
\]
denote the mean bar length.

Adding a bar of length $\ell$ yields
\begin{equation}
\tsi(B\cup\{[b,b+\ell)\})
=\frac{n^B-1}{n^B}\tsi(B)
+\frac{1}{n^B+1}\left(\ell-\bar{\ell}_B\right)^2.
\end{equation}

If moreover $n^B\geq 3$, then removing a bar of length $\ell$ gives
\begin{equation}
\tsi(B\setminus\{[b,b+\ell)\})
=\frac{n^B-1}{n^B-2}\tsi(B)
-\frac{n^B}{(n^B-1)(n^B-2)}\left(\ell-\bar{\ell}_B\right)^2.
\end{equation}
\end{lemma}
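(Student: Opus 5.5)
The plan is to work throughout with the sum of squared deviations,
\[
S_B:=\sum_{i=1}^{n^B}\left(\ell_i-\bar{\ell}_B\right)^2=(n^B-1)\tsi(B),
\]
and to derive a Welford-type update rule for $S$. Both formulas in the lemma then follow by dividing by the appropriate $n-1$.

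For insertion, write $n=n^B$ and let $B'=B\cup\{[b,b+\ell)\}$. The new mean is
\[
\bar{\ell}_{B'}=\frac{n\bar{\ell}_B+\ell}{n+1}=\bar{\ell}_B+\frac{\ell-\bar{\ell}_B}{n+1}.
\]
Set $\delta:=\ell-\bar{\ell}_B$, so the mean moves by $\delta/(n+1)$. I would split $S_{B'}$ into the contribution of the old bars and that of the new bar.

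For the old bars, each deviation is shifted by the constant $\delta/(n+1)$. Since the old deviations sum to zero, the cross term vanishes, and their contribution is
\[
S_B+\frac{n\delta^2}{(n+1)^2}.
\]
The new bar contributes $\left(\delta-\frac{\delta}{n+1}\right)^2=\frac{n^2\delta^2}{(n+1)^2}$. Adding the two pieces gives
\[
S_{B'}=S_B+\frac{n\delta^2}{n+1}.
\]
Dividing by $n$ (since $n^{B'}-1=n$) yields
\[
\tsi(B')=\frac{n-1}{n}\tsi(B)+\frac{1}{n+1}\delta^2,
\]
which is the first claim. The condition $n\ge 2$ ensures that $\tsi(B)$ is given by the genuine variance formula rather than the convention $\tsi=0$.

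For deletion, I would not recompute directly. Instead, I would apply the insertion identity with the roles reversed. Let $C=B\setminus\{[b,b+\ell)\}$, which has $n-1\ge 2$ bars, so the insertion formula applies to $C$. Adding the removed bar back to $C$ recovers $B$, and that formula gives
\[
S_B=S_C+\frac{n-1}{n}\left(\ell-\bar{\ell}_C\right)^2.
\]
The main obstacle is rewriting $\ell-\bar{\ell}_C$ in terms of $\bar{\ell}_B$. From $(n-1)\bar{\ell}_C=n\bar{\ell}_B-\ell$ we get
\[
\ell-\bar{\ell}_C=\frac{n}{n-1}\left(\ell-\bar{\ell}_B\right),
\]
and therefore
\[
S_C=S_B-\frac{n}{n-1}\left(\ell-\bar{\ell}_B\right)^2.
\]
Dividing by $n-2$, using $S_B=(n-1)\tsi(B)$, gives exactly
\[
\tsi(C)=\frac{n-1}{n-2}\tsi(B)-\frac{n}{(n-1)(n-2)}\left(\ell-\bar{\ell}_B\right)^2.
\]
The hypothesis $n\ge 3$ is precisely what keeps $n-2>0$ and ensures that $C$ still has at least two bars.
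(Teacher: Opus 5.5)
Your proof is correct. The insertion half is the same computation as in the paper: you shift the mean by $\delta/(n+1)$, use that the old deviations sum to zero, and obtain $S_{B'}=S_B+\frac{n}{n+1}\delta^2$.

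For removal you take a different route. The paper expands $\sum_{i\neq k}(\ell_i-\bar\ell^-)^2$ directly with $\bar\ell^-=\bar\ell_B-\delta/(n-1)$, and only states the result. In that direct expansion the cross term does not vanish, because the surviving deviations about $\bar\ell_B$ sum to $-\delta$ rather than $0$. One therefore has to track $\sum_{i\neq k}(\ell_i-\bar\ell_B)^2=(n-1)\tsi(B)-\delta^2$ together with a cross-term contribution of $-2\delta^2/(n-1)$.

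You instead apply the insertion identity to $C$ itself. There the deviations are measured about $C$'s own mean, so the clean cancellation is available, and the only new work is the one-line conversion $\ell-\bar\ell_C=\frac{n}{n-1}(\ell-\bar\ell_B)$. Your route makes the deletion formula a formal consequence of the insertion formula, since the two are inverse updates, and it supplies the step the paper leaves as ``one finds.'' The paper's direct expansion, in exchange, keeps everything referenced to $\bar\ell_B$ and mirrors the insertion computation line by line.

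A small refinement: your argument only uses the $S$-form of the insertion update, which holds whenever $C$ is nonempty. So the hypothesis $n^B\geq 3$ is really needed only for the final division by $n-2$.
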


\begin{proof}
Write $n=n^B$, let the bar lengths of $B$ be $\ell_1,\dots,\ell_n$, and let
\[
\bar{\ell}=\frac{L^B}{n}=\frac{1}{n}\sum_{i=1}^n \ell_i.
\]
By definition,
\[
(n-1)\tsi(B)=\sum_{i=1}^n (\ell_i-\bar{\ell})^2.
\]

We first prove the formula for adding a bar of length $\ell$.
Let
\[
B^+:=B\cup\{[b,b+\ell)\}.
\]
Its new mean bar length is
\[
\bar{\ell}^+=\frac{L^B+\ell}{n+1}
=\bar{\ell}+\frac{\ell-\bar{\ell}}{n+1}.
\]
Set $\delta:=\ell-\bar{\ell}$. Then
\[
\bar{\ell}^+=\bar{\ell}+\frac{\delta}{n+1}.
\]
Hence
\begin{align*}
n\,\tsi(B^+)
&=\sum_{i=1}^n (\ell_i-\bar{\ell}^+)^2+(\ell-\bar{\ell}^+)^2\\
&=\sum_{i=1}^n \left((\ell_i-\bar{\ell})-\frac{\delta}{n+1}\right)^2
+\left(\delta-\frac{\delta}{n+1}\right)^2.
\end{align*}
Expanding the first sum and using
\[
\sum_{i=1}^n (\ell_i-\bar{\ell})=0,
\]
we get
\begin{align*}
n\,\tsi(B^+)
&=\sum_{i=1}^n (\ell_i-\bar{\ell})^2
+\frac{n\delta^2}{(n+1)^2}
+\frac{n^2\delta^2}{(n+1)^2}\\
&=(n-1)\tsi(B)+\frac{n(n+1)\delta^2}{(n+1)^2}\\
&=(n-1)\tsi(B)+\frac{n}{n+1}\delta^2.
\end{align*}
Dividing by \(n\) yields
\[
\tsi(B^+)
=\frac{n-1}{n}\tsi(B)+\frac{1}{n+1}\delta^2,
\]
which proves the first formula.

For the removal formula, let \(B^-\) be obtained from \(B\) by deleting a bar of length \(\ell\), and again write \(\delta=\ell-\bar{\ell}\).
The new mean is
\[
\bar{\ell}^-=\frac{L^B-\ell}{n-1}
=\bar{\ell}-\frac{\delta}{n-1}.
\]
Thus
\begin{align*}
(n-2)\tsi(B^-)
&=\sum_{i\neq k}(\ell_i-\bar{\ell}^-)^2,
\end{align*}
where \(\ell_k=\ell\) is the removed bar.
Using
\[
(n-1)\tsi(B)=\sum_{i\neq k}(\ell_i-\bar{\ell})^2+(\ell-\bar{\ell})^2
\]
and expanding with \(\bar{\ell}^-=\bar{\ell}-\delta/(n-1)\), one finds
\[
(n-2)\tsi(B^-)
=(n-1)\tsi(B)-\frac{n}{n-1}\delta^2.
\]
Therefore,
\[
\tsi(B^-)
=\frac{n-1}{n-2}\tsi(B)-\frac{n}{(n-1)(n-2)}\delta^2,
\]
which proves the second formula.
\end{proof}

\begin{remark}\rm
Lemma \ref{lemma:1_changed_bar} provides an exact finite-sample update formula for the TSI, analogous to classical incremental variance formulas in statistics. In particular, it shows that the TSI is sensitive not only to the magnitude of a new bar but to its deviation from the existing mean lifetime.
\end{remark}

\begin{corollary}
Under the notation of Lemma \ref{lemma:1_changed_bar}, adding a bar of length \(\ell\) increases the TSI if and only if
\[
\left(\ell-\bar{\ell}_B\right)^2>\frac{n^B+1}{n^B}\tsi(B).
\]
\end{corollary}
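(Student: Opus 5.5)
The corollary follows from the insertion formula of Lemma~\ref{lemma:1_changed_bar}, so I will argue directly from it. Write $n=n^B\geq 2$, $\delta=\ell-\bar{\ell}_B$, and $B^+=B\cup\{[b,b+\ell)\}$. The lemma gives
\[
\tsi(B^+)=\frac{n-1}{n}\tsi(B)+\frac{1}{n+1}\delta^2,
\]
and subtracting $\tsi(B)$ from both sides yields the exact increment
\[
\tsi(B^+)-\tsi(B)=\frac{1}{n+1}\delta^2-\frac{1}{n}\tsi(B).
\]

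The TSI increases precisely when this difference is strictly positive. Since $n+1>0$, multiplying by $n+1$ preserves the inequality and turns the condition into
\[
\delta^2>\frac{n+1}{n}\tsi(B).
\]
This is the stated criterion. Each step is an equivalence, because the rescaling is by a positive constant. So both directions of the ``if and only if'' hold at once.

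There is no real obstacle beyond the hypothesis $n^B\geq 2$, which the lemma needs. I will state this explicitly so the convention $\tsi=0$ for $n\le 1$ plays no role. I will also note the boundary case: equality $\delta^2=\frac{n+1}{n}\tsi(B)$ corresponds exactly to the TSI being unchanged. In particular, when $\tsi(B)=0$, any $\ell\neq\bar{\ell}_B$ strictly increases the TSI.
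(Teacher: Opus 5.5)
Your proof is correct and is essentially the argument the paper intends. The paper gives no separate proof and treats the corollary as an immediate consequence of the insertion formula in Lemma~\ref{lemma:1_changed_bar}; subtracting $\tsi(B)$ and multiplying by the positive factor $n^B+1$ is exactly that step, and your remarks on the equality case and on $\tsi(B)=0$ are accurate.
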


\begin{remark}\rm
The threshold
\[
\sqrt{\frac{n^B+1}{n^B}\tsi(B)}
\]
acts as a variance barrier: bars further from the mean than this value increase dispersion, while closer bars reduce it.
\end{remark}

This threshold behavior is illustrated in Figure~\ref{fig:tsi_bar_insertion}, where the dashed line indicates the mean lifetime $\bar{\ell}_B$.

\begin{figure}[htbp]
\centering
\begin{subfigure}[t]{0.33\textwidth}
    \centering
    \includegraphics[height=1.75in]{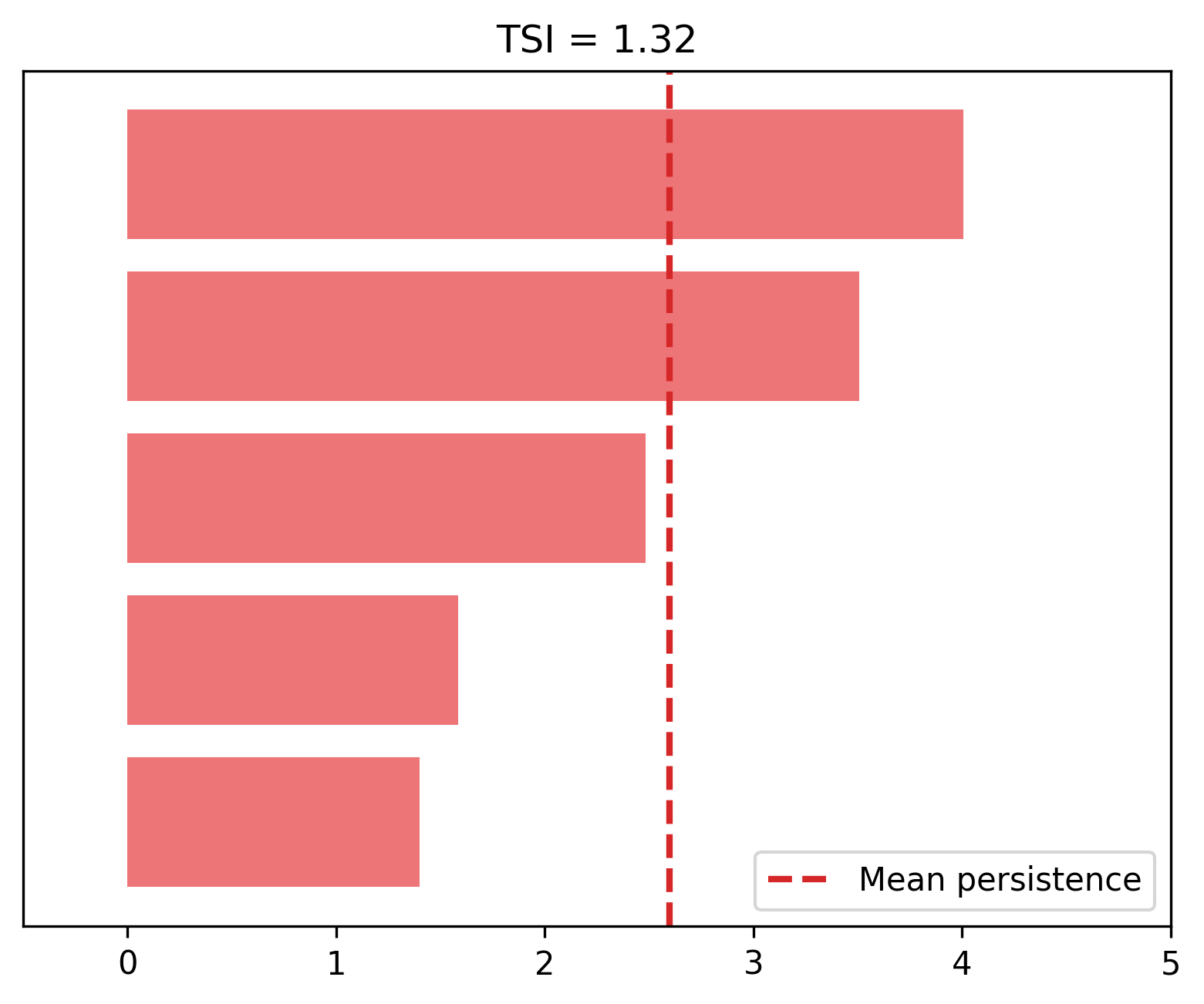}
    \caption{Example of a barcode with mean.}
\end{subfigure}
~
\begin{subfigure}[t]{0.33\textwidth}
    \centering
    \includegraphics[height=1.75in]{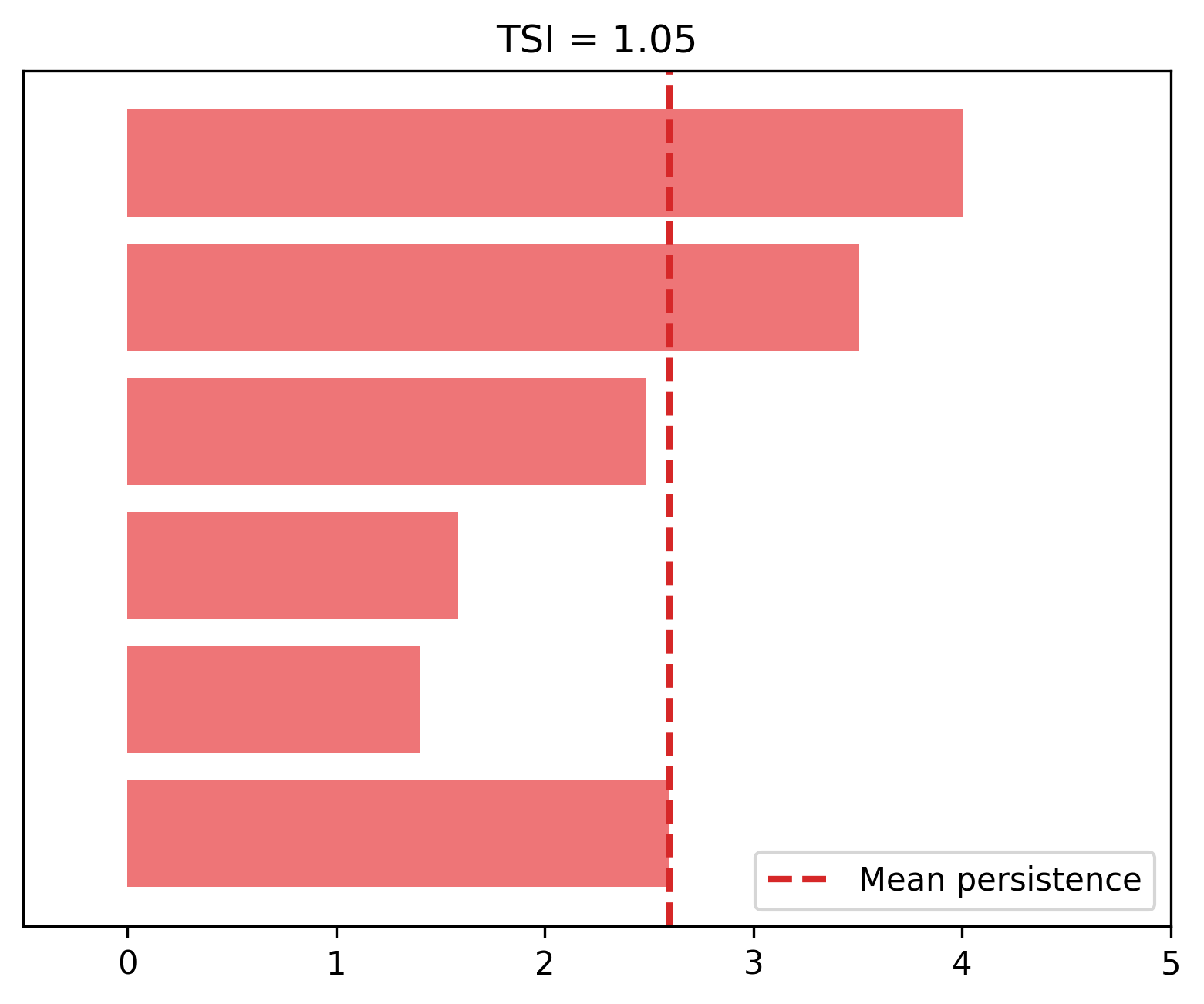}
    \caption{Adding a bar at the mean.}
\end{subfigure}
~
\begin{subfigure}[t]{0.33\textwidth}
    \centering
    \includegraphics[height=1.75in]{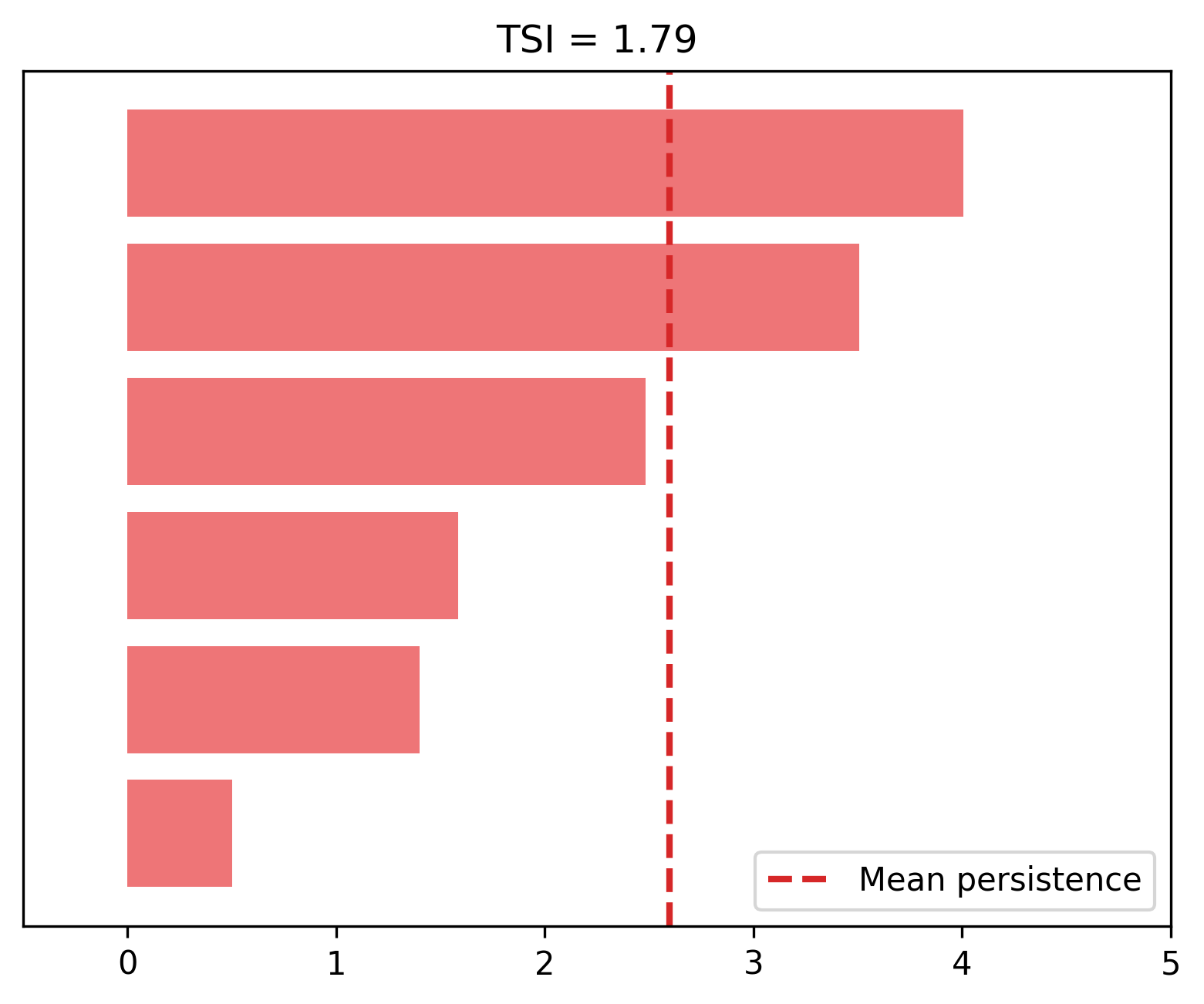}
    \caption{Adding a bar away from the mean.}
\end{subfigure}
\caption{Effect of inserting a new bar. If the new bar length is close to the mean lifetime, the TSI may decrease; if it is sufficiently far from the mean, the TSI increases. This threshold behavior is quantified by the variance barrier in the preceding corollary.}
\label{fig:tsi_bar_insertion}
\end{figure}

This sensitivity to the position of the inserted bar has an important consequence for stability. As a consequence, if $\ell\to 0$,
\[
\tsi(B\cup\{[b,b+\ell)\})
\to
\frac{n^B-1}{n^B}\tsi(B)
+\frac{1}{n^B+1}\left(\frac{L^B}{n^B}\right)^2,
\]
which in general differs from $\tsi(B)$. Thus, TSI is not continuous under the insertion of arbitrarily short bars. In particular, no Lipschitz-type stability bound of the form
\[
|\tsi(B)-\tsi(B')|\le C\,d_{W_p}(B,B')
\]
with a constant $C$ independent of the number of bars can hold. Indeed, adding a bar of length $\ell$ contributes a quantity of order $\ell$ to the Wasserstein distance, while the TSI converges to a value different from $\tsi(B)$ as $\ell\to 0$, because the insertion changes both the sample mean and the sample-size normalization. This behavior is illustrated in Figure~\ref{fig:tsi_noncontinuity}.

\begin{figure}[htbp]
\centering
\includegraphics[width = 0.7\textwidth]{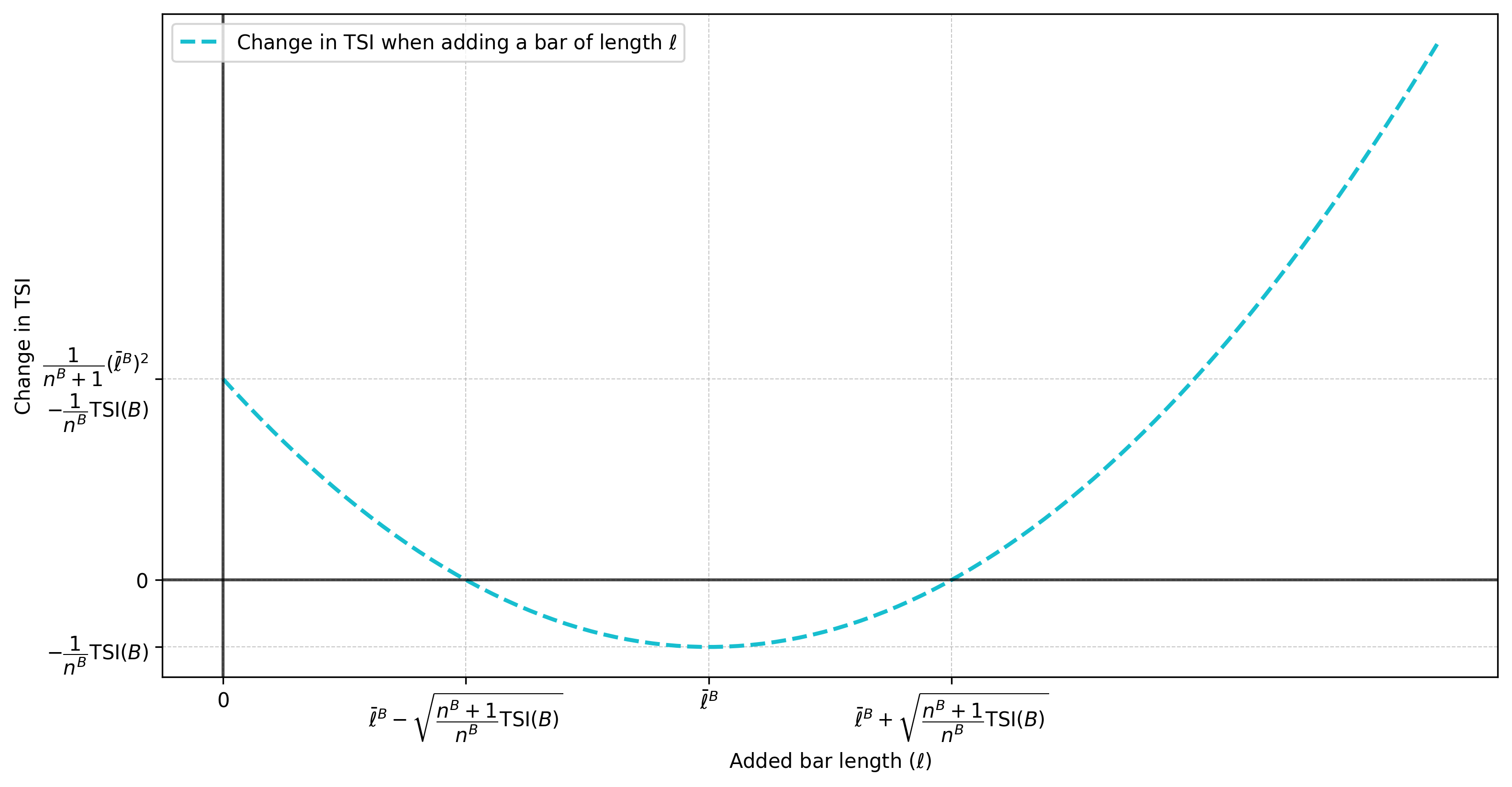}
\caption{Non-continuity of the TSI under insertion of arbitrarily short bars. As the added bar length $\ell$ tends to zero, the Wasserstein distance to the original barcode vanishes, while the TSI converges to a value that in general differs from the original one.}
\label{fig:tsi_noncontinuity}
\end{figure}


\subsection{Stability Properties and Quantitative Bounds}

Despite the lack of a general stability bound under the insertion of bars, meaningful estimates can still be obtained in specific settings. In particular, one can control the TSI when comparing a barcode to the empty diagram, where all intervals are matched to the diagonal. This leads to the following bound.

\begin{lemma}[Bounds relative to the empty diagram]
Let $B=\{[b_i,d_i)\}_{i=1}^{n^B}$ be a barcode. Then, for every $2\le p\le \infty$, one has
\begin{equation}
\tsi(B)\le 4 (n^B)^{-2/p}\, d_{W_p}(B,\emptyset)^2,
\end{equation}
where the Wasserstein distance to the empty diagram is computed by matching all intervals to the diagonal.
\end{lemma}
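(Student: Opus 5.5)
The plan is to compute $d_{W_p}(B,\emptyset)$ explicitly and then reduce the claim to an elementary inequality between the sample variance and a power mean of the lifetimes. The cases $n^B\le 1$ are trivial, since then $\tsi(B)=0$ by convention. So assume $n:=n^B\ge 2$.

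\emph{Step 1: the distance to the empty diagram.} Match every point $(b_i,d_i)$ to its nearest diagonal point $\bigl(\tfrac{b_i+d_i}{2},\tfrac{b_i+d_i}{2}\bigr)$. With the usual $\ell^\infty$ ground metric this costs exactly $\ell_i/2$. Since the only admissible matching to $\emptyset$ sends everything to the diagonal, we get
\[
d_{W_p}(B,\emptyset)=\Bigl(\sum_{i=1}^n (\ell_i/2)^p\Bigr)^{1/p},
\qquad\text{hence}\qquad
4\,d_{W_p}(B,\emptyset)^2=\Bigl(\sum_{i=1}^n \ell_i^p\Bigr)^{2/p}.
\]
For $p=\infty$ this reads $4\,d_{W_\infty}(B,\emptyset)^2=(\max_i\ell_i)^2$. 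If the paper's convention instead uses the Euclidean ground metric, each cost becomes $\ell_i/\sqrt2$. The right-hand side then only gets larger, by a factor $2$, so the bound below still holds.

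\emph{Step 2: drop the mean correction.} Starting from the second-moment form \eqref{eq:tsi_second_moment_form}, I will show
\[
\tsi(B)\le \frac1n\sum_{i=1}^n\ell_i^2 .
\]
This is equivalent to
\[
\sum_i\ell_i^2-\frac{L^2}{n}\le \frac{n-1}{n}\sum_i\ell_i^2,
\quad\text{i.e.}\quad
\sum_i\ell_i^2\le L^2 .
\]
The last inequality holds because all $\ell_i\ge0$: expanding $L^2$ gives $\sum_i\ell_i^2$ plus nonnegative cross terms. This is the same fact used in the upper bound of Theorem~\ref{thm:tsi_extremal}.

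\emph{Step 3: the power-mean inequality.} For $2\le p<\infty$, apply Hölder (equivalently, monotonicity of power means) to the lifetimes:
\[
\frac1n\sum_i\ell_i^2\le\Bigl(\frac1n\sum_i\ell_i^p\Bigr)^{2/p}
= n^{-2/p}\Bigl(\sum_i\ell_i^p\Bigr)^{2/p}.
\]
For $p=\infty$, use directly $\frac1n\sum_i\ell_i^2\le(\max_i\ell_i)^2$, which agrees with $n^{-2/p}=1$. Chaining Steps 2 and 3 with Step 1 gives the stated inequality.

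The only delicate point is Step 1: fixing the convention for the ground metric and checking that the optimal matching to the empty diagram is the diagonal projection. I expect the rest to be routine. The factor $4$ is then exactly absorbed by the halving of the lifetimes in the $\ell^\infty$ projection.
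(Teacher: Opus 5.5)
Your proposal is correct and is essentially the paper's proof. Your Step 2 is the paper's use of $\|\ell\|_1\ge\|\ell\|_2$ to get $\tsi(B)\le\frac1n\|\ell\|_2^2$, and your Step 3 is its norm inequality $\|\ell\|_2\le n^{1/2-1/p}\|\ell\|_p$ combined with $\|\ell\|_p=2\,d_{W_p}(B,\emptyset)$, with the same direct handling of $p=\infty$; your side remarks on the case $n^B\le 1$ and on a Euclidean ground metric only enlarging the right-hand side are correct, and the paper does not address them.
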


\begin{proof}
Write $n=n^B$ and let $\ell_i=d_i-b_i\ge 0$ denote the bar lengths. Recall that
\begin{equation}\label{eq:tsi_variance_formula_empty}
\tsi(B)=\frac{1}{n-1}\left(\sum_{i=1}^n \ell_i^2-\frac{1}{n}\left(\sum_{i=1}^n \ell_i\right)^2\right).
\end{equation}
Also, when matching to the diagonal, one has
\[
d_{W_p}(B,\emptyset)
=
\left(\sum_{i=1}^n \left(\frac{\ell_i}{2}\right)^p\right)^{1/p}
=
\frac12\left(\sum_{i=1}^n \ell_i^p\right)^{1/p}
\qquad (2\le p<\infty),
\]
and
\[
d_{W_\infty}(B,\emptyset)=\frac12\max_i \ell_i.
\]

We first treat the case $2\le p<\infty$. Let
\[
\|\ell\|_p:=\left(\sum_{i=1}^n \ell_i^p\right)^{1/p}.
\]
Then \eqref{eq:tsi_variance_formula_empty} can be written as
\[
\tsi(B)=\frac{1}{n-1}\left(\|\ell\|_2^2-\frac{1}{n}\|\ell\|_1^2\right).
\]
By the standard norm inequalities on $\mathbb{R}^n$, for $2\le p<\infty$ we have
\[
\|\ell\|_2 \le n^{\frac12-\frac1p}\|\ell\|_p
\qquad\text{and}\qquad
\|\ell\|_1 \ge \|\ell\|_2.
\]
Hence
\[
\|\ell\|_1^2 \ge \|\ell\|_2^2,
\]
so from \eqref{eq:tsi_variance_formula_empty} we obtain
\begin{align*}
\tsi(B)
&\le \frac{1}{n-1}\left(\|\ell\|_2^2-\frac{1}{n}\|\ell\|_2^2\right)\\
&=\frac{1}{n}\|\ell\|_2^2.
\end{align*}
Using again $\|\ell\|_2 \le n^{\frac12-\frac1p}\|\ell\|_p$, we get
\[
\tsi(B)\le \frac{1}{n}\, n^{1-\frac{2}{p}}\|\ell\|_p^2
= n^{-2/p}\|\ell\|_p^2.
\]
Since
\[
\|\ell\|_p = 2\, d_{W_p}(B,\emptyset),
\]
it follows that
\[
\tsi(B)\le 4 n^{-2/p}\, d_{W_p}(B,\emptyset)^2.
\]

Now consider the case $p=\infty$. Since
\[
\|\ell\|_2^2=\sum_{i=1}^n \ell_i^2 \le n\,\|\ell\|_\infty^2,
\]
equation \eqref{eq:tsi_variance_formula_empty} gives
\[
\tsi(B)\le \frac{1}{n-1}\|\ell\|_2^2 \le \frac{n}{n-1}\|\ell\|_\infty^2.
\]
A sharper estimate is obtained by combining
\[
\tsi(B)\le \frac{1}{n}\|\ell\|_2^2
\]
with
\[
\|\ell\|_2^2\le n\,\|\ell\|_\infty^2,
\]
which yields
\[
\tsi(B)\le \|\ell\|_\infty^2.
\]
Since
\[
\|\ell\|_\infty = 2\, d_{W_\infty}(B,\emptyset),
\]
we conclude that
\[
\tsi(B)\le 4\, d_{W_\infty}(B,\emptyset)^2.
\]
This is exactly the claimed bound, since $n^{-2/\infty}=n^0=1$.
\end{proof}

The previous lemma shows that, although the TSI is not stable under arbitrary barcode perturbations, it is still controlled when all bars are compared directly to the diagonal. In this sense, the TSI is controlled by the squared $p$-moment of the lifetime distribution when the barcode is compared to the empty diagram.

\begin{remark}\rm
For the bottleneck distance ($p=\infty$), a bound in terms of the range of the lifetimes follows from Popoviciu's inequality
\cite{bhatia_better_2000}. Writing
\[
L_\infty^B:=\max_i \ell_i,
\qquad
L_0^B:=\min_i \ell_i,
\]
and recalling that \(\tsi\) is the unbiased sample variance, we obtain
\[
\tsi(B)\leq
\frac{n^B}{n^B-1}\frac{1}{4}(L_\infty^B-L_0^B)^2.
\]
Since
\[
d_{W_\infty}(B,\emptyset)=\frac12 L_\infty^B,
\]
we also have
\[
\tsi(B)\leq
\frac{n^B}{n^B-1}d_{W_\infty}(B,\emptyset)^2.
\]
Since \(\tsi(\emptyset)=0\), this yields a bound on
\(|\tsi(B)-\tsi(\emptyset)|\) in terms of the bottleneck distance.
\end{remark}

Finally, assuming two barcodes have the same number of bars, the following lemma describes the relationship between TSI and bottleneck distance:

\begin{lemma}
Let $B_1$ and $B_2$ be barcodes with the same number of bars $n\geq 2$. Then
\begin{equation}
|\tsi(B_1)-\tsi(B_2)|
\leq \frac{4}{n-1}\left(L^{B_1}+L^{B_2}\right)d_B(B_1,B_2).
\end{equation}
\end{lemma}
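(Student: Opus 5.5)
The plan is to reduce the bottleneck matching to a bijection between the two multisets of lifetimes in which each pair of matched lifetimes differs by at most $2d$, where $d:=d_B(B_1,B_2)$. I would then estimate the two terms of the second-moment form of the TSI used in the proof of Theorem~\ref{thm:tsi_extremal} separately.

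\emph{Step 1 (a lifetime bijection).} Fix an optimal partial matching realizing $d$; if only an infimum is available, take a matching of cost $d+\varepsilon$ and let $\varepsilon\to 0$ at the end. A bar $[b,d_1)$ matched to a bar $[b',d_1')$ with $|b-b'|,|d_1-d_1'|\le d$ has lifetimes satisfying $|\ell-\ell'|\le 2d$. A bar matched to the diagonal has $\ell\le 2d$, since its $\ell^\infty$-distance to the diagonal is $\ell/2$. Because $B_1$ and $B_2$ both have $n$ bars, they contain the same number of bars left unmatched (i.e.\ sent to the diagonal). I pair these leftover bars arbitrarily. For such a pair, $|\ell-\ell'|\le\max(\ell,\ell')\le 2d$, using $\ell,\ell'\ge 0$. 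The result is a permutation $\sigma$ with $|\ell_i-\ell'_{\sigma(i)}|\le 2d$ for all $i$. After relabelling, I write $\ell_i'$ for $\ell'_{\sigma(i)}$.

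\emph{Step 2 (the estimate).} Using
\[
(n-1)\tsi(B)=\sum_i \ell_i^2-\frac{L^2}{n},
\]
the difference $(n-1)\bigl(\tsi(B_1)-\tsi(B_2)\bigr)$ splits into two terms. The first is
\[
\sum_i(\ell_i^2-\ell_i'^2)=\sum_i(\ell_i-\ell_i')(\ell_i+\ell_i'),
\]
which is bounded in absolute value by $2d\sum_i(\ell_i+\ell_i')=2d\,(L^{B_1}+L^{B_2})$. The second is
\[
\frac1n\bigl((L^{B_1})^2-(L^{B_2})^2\bigr)=\frac1n(L^{B_1}-L^{B_2})(L^{B_1}+L^{B_2}).
\]
Since $|L^{B_1}-L^{B_2}|\le\sum_i|\ell_i-\ell_i'|\le 2nd$, this term is bounded by $2d\,(L^{B_1}+L^{B_2})$. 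Adding the two bounds and dividing by $n-1$ gives exactly the constant $\frac{4}{n-1}$ in the statement.

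\emph{Main obstacle.} The only delicate point is Step 1: the optimal bottleneck matching need not be a bijection between the bars, because some bars may be matched to the diagonal. This is handled by the cardinality argument together with the nonnegativity of lifetimes. Note that the equality of bar counts is exactly what fails in the insertion counterexample discussed before this lemma. I would also remark on how the bound depends on the diagram conventions: if the bottleneck distance is taken in the $\ell^\infty$ norm on $(b,d)$, each lifetime moves by at most $2d$, and the computation above applies as stated.
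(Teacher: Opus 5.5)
Your proposal is correct and follows the paper's proof essentially step for step: matched lifetimes differ by at most $2d_B(B_1,B_2)$, and the second-moment form is split into the $\sum_i \ell_i^2$ term and the $(L^B)^2/n$ term, each bounded by $2d_B(B_1,B_2)\,(L^{B_1}+L^{B_2})$. Your Step 1 also justifies something the paper takes for granted: it treats the optimal matching as a bijection of bars, whereas you show, via the cardinality count and $\ell\le 2d_B(B_1,B_2)$ for diagonal-matched bars, that it can be turned into one even when some bars are matched to the diagonal.
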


\begin{proof}
The proof follows ideas from \cite{atienza_persistent_2019}.
Let $\delta=d_B(B_1,B_2)$ and let $\gamma:dgm(B_1)\to dgm(B_2)$ be an optimal matching. Denote by $(x_i,y_i)$ the points of $dgm(B_1)$ and $(x_i',y_i')=\gamma((x_i,y_i))$ their matches in $dgm(B_2)$. Write $\ell_i=y_i-x_i$ and $\ell_i'=y_i'-x_i'$.

Since the bottleneck distance is defined using the $\ell^\infty$ norm, we have
\[
|x_i-x_i'|\leq \delta, \quad |y_i-y_i'|\leq \delta,
\]
and therefore
\[
|\ell_i-\ell_i'|
= |(y_i-x_i)-(y_i'-x_i')|
\leq |y_i-y_i'|+|x_i-x_i'|
\leq 2\delta.
\]

Summing over all $i$ gives
\[
|L^{B_1}-L^{B_2}|
= \left|\sum_i \ell_i - \sum_i \ell_i'\right|
\leq \sum_i |\ell_i-\ell_i'|
\leq 2n\delta.
\]

We now estimate the difference of TSI:
\begin{align*}
|\tsi(B_1)-\tsi(B_2)|
&=\frac{1}{n-1}\left|\sum_i \ell_i^2 - \frac{1}{n}(L^{B_1})^2
-\sum_i (\ell_i')^2 + \frac{1}{n}(L^{B_2})^2\right|\\
&\leq \frac{1}{n-1}\sum_i |\ell_i^2-(\ell_i')^2|
+\frac{1}{n-1}\left|\frac{1}{n}(L^{B_1})^2-\frac{1}{n}(L^{B_2})^2\right|.
\end{align*}

For the first term, we use
\[
|\ell_i^2-(\ell_i')^2|
=|\ell_i-\ell_i'||\ell_i+\ell_i'|
\leq 2\delta\,|\ell_i+\ell_i'|,
\]
so that
\[
\sum_i |\ell_i^2-(\ell_i')^2|
\leq 2\delta \sum_i (\ell_i+\ell_i')
=2\delta\left(L^{B_1}+L^{B_2}\right).
\]

For the second term, we use
\[
|(L^{B_1})^2-(L^{B_2})^2|
=|L^{B_1}-L^{B_2}|\cdot|L^{B_1}+L^{B_2}|
\leq 2n\delta\left(L^{B_1}+L^{B_2}\right).
\]

Combining the estimates, we obtain
\begin{align*}
|\tsi(B_1)-\tsi(B_2)|
&\leq \frac{1}{n-1}\left[
2\delta\left(L^{B_1}+L^{B_2}\right)
+\frac{1}{n}\cdot 2n\delta\left(L^{B_1}+L^{B_2}\right)
\right]\\
&=\frac{1}{n-1}\left[
2\delta\left(L^{B_1}+L^{B_2}\right)
+2\delta\left(L^{B_1}+L^{B_2}\right)
\right]\\
&=\frac{4}{n-1}\delta\left(L^{B_1}+L^{B_2}\right).
\end{align*}
\end{proof}


\subsection{Mean Persistence and a Signal Strength Interpretation}

While the TSI captures the dispersion of persistence lifetimes, it is also natural to consider a complementary quantity that measures their typical scale. This leads to a simple first-moment-type summary, which we interpret as a measure of topological signal strength.

\begin{definition}
Let $B = \{[b_i, d_i)\}_{i=1}^{n^B}$ be a persistence barcode with lifetimes $\ell_i = d_i - b_i$ and total persistence
\[
L^B = \sum_{i=1}^{n^B} \ell_i.
\]
Assume $L^B > 0$. The \emph{Topological Signal Index} (TSigI) of $B$ is defined by
\begin{equation}
\label{eq:tsigi}
\mathrm{TSigI}(B) := \frac{\sum_{i=1}^{n^B} \ell_i^2}{\sum_{i=1}^{n^B} \ell_i}.
\end{equation}
\end{definition}

The quantity $\mathrm{TSigI}(B)$ can be interpreted as a persistence-weighted mean of the lifetimes. Indeed, defining weights
\[
p_i := \frac{\ell_i}{L^B}, \qquad \sum_{i=1}^{n^B} p_i = 1,
\]
we obtain
\[
\mathrm{TSigI}(B) = \sum_{i=1}^{n^B} p_i \, \ell_i.
\]
Thus, $\mathrm{TSigI}(B)$ represents the expected lifetime of a feature sampled proportionally to its persistence.

\begin{remark}\rm
Let $B$ be a barcode with $n^B \geq 1$ and $L^B > 0$. Then:
\begin{enumerate}
    \item $\displaystyle \frac{L^B}{n^B} \leq \mathrm{TSigI}(B) \leq L^B$,
    \item equality on the left holds if and only if all lifetimes are equal,
    \item equality on the right holds if and only if the persistence is fully concentrated in a single bar,
    \item for $c > 0$, $\mathrm{TSigI}(cB) = c\, \mathrm{TSigI}(B)$.
\end{enumerate}
These statements follow directly from elementary inequalities and the arguments used in Theorem~\ref{thm:tsi_extremal}.
\end{remark}

The Topological Signal Index complements the TSI by capturing the \emph{typical scale} of persistence lifetimes, whereas the TSI measures their \emph{dispersion}. The two quantities are directly related through the identity:
\begin{equation}
\label{eq:tsi_tsigi_relation}
\mathrm{TSI}(B)
=
\frac{1}{n^B - 1}
\left(
\sum_{i=1}^{n^B} \ell_i^2 - \frac{(L^B)^2}{n^B}
\right)
=
\frac{L^B}{n^B - 1}
\left(
\mathrm{TSigI}(B) - \frac{L^B}{n^B}
\right).
\end{equation}

\subsubsection{A Hierarchy of Topological Signal Moments}

The relation between \(\tsi(B)\) and \(\text{TSigI}(B)\) suggests that these quantities should not be viewed as isolated descriptors. Rather, they arise naturally from consecutive powers of the persistence lifetimes. This point of view leads to a hierarchy of scalar summaries that interpolate between average persistence and the largest persistence lifetime, resembling the ideas behind the {\it Lehmer mean} \cite{LEHMER1971183}. More precisely, we have:

\begin{definition}
For \(k\geq 1\), the \(k\)-th \emph{Topological Signal Moment} of \(B\) is defined by
\[
M_k(B):=
\frac{\sum_{i=1}^{n}\ell_i^k}
{\sum_{i=1}^{n}\ell_i^{k-1}},
\]
whenever the denominator is nonzero.
\end{definition}

The quantity \(M_k(B)\) compares two consecutive power sums of the lifetime multiset. Equivalently, it may be written as
\[
M_k(B)
=
\sum_{i=1}^{n} w_i^{(k)}\ell_i,
\qquad
w_i^{(k)}
=
\frac{\ell_i^{k-1}}{\sum_{j=1}^{n}\ell_j^{k-1}}.
\]
Thus \(M_k(B)\) is a weighted average of the lifetimes, where the weights depend on the \((k-1)\)-st powers of the lifetimes. As \(k\) increases, longer bars receive progressively greater weight. The first two members of this hierarchy recover familiar quantities. Indeed,

\[
M_1(B)=\frac{L^B}{n} \quad \text{and} \quad M_2(B)=\text{TSigI}(B).
\]

This shows that \(TSigI\) is not an ad hoc companion to the TSI. It is the second member of a natural sequence of lifetime-based descriptors. We now rewrite the identity between \(TSI\) and \(TSigI\) in terms of the moment hierarchy.

\begin{prop}
For every barcode \(B\) with \(n\geq 2\),
\[
\tsi(B)
=
\frac{L^B}{n-1}
\left(
M_2(B)-M_1(B)
\right).
\]
\end{prop}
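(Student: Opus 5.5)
The plan is to substitute the moment identifications $M_1(B)=L^B/n$ and $M_2(B)=\mathrm{TSigI}(B)$, noted just after the definition of the Topological Signal Moments, into the relation \eqref{eq:tsi_tsigi_relation}. As a safeguard, I would also verify that relation directly from the second-moment form of the TSI used in the proof of Theorem~\ref{thm:tsi_extremal}.

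First, I would fix the standing assumption $L^B>0$, since this is needed for $M_2(B)$ to be defined. Its denominator is $\sum_i \ell_i^{1}=L^B$. For $M_1(B)$ the denominator is $\sum_i \ell_i^0=n$, which is nonzero, so $M_1(B)=L^B/n$ with no extra hypothesis.

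Next, I would start from
\[
\tsi(B)=\frac{1}{n-1}\left(\sum_{i=1}^n \ell_i^2-\frac{(L^B)^2}{n}\right)
\]
and factor out $L^B$:
\[
\sum_i \ell_i^2-\frac{(L^B)^2}{n}=L^B\left(\frac{\sum_i \ell_i^2}{L^B}-\frac{L^B}{n}\right)=L^B\bigl(M_2(B)-M_1(B)\bigr).
\]
Dividing by $n-1$, which is positive because $n\ge 2$, gives the claim.

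The only point requiring care is the degenerate case $L^B=0$. Then every $\ell_i=0$ and $M_2(B)$ is undefined, so the identity must be read under the convention $L^B>0$ inherited from the definition of TSigI. In that case $\tsi(B)=0$ anyway, and the right-hand side can be interpreted as $0$ if one multiplies the factor $L^B$ through before dividing. I would add a one-line remark to this effect. Apart from this, there is no real obstacle; the argument is a direct algebraic rewriting.
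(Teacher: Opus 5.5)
Your proposal is correct and matches the paper's proof: both start from the second-moment form of the TSI, rewrite $\sum_i \ell_i^2 = L^B M_2(B)$ and $(L^B)^2/n = L^B M_1(B)$, and factor out $L^B$. Your explicit note that $L^B>0$ is needed for $M_2(B)$ to be defined is a reasonable addition; the paper leaves this assumption implicit.
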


\begin{proof}
\[
\begin{array}{lclcl}
\tsi(B) & = & \frac{1}{n-1}\left(\sum_{i=1}^{n}\ell_i^2 -
\frac{(L^B)^2}{n}\right)  &\overset{\sum_i\ell_i^2=L^B M_2(B)}{\underset{L^B=n M_1(B)}{=}}& \frac{1}{n-1}\left(L^B M_2(B)-L^B M_1(B)\right)\ =\\
&&&&\\
& = &  \frac{L^B}{n-1}\left(M_2(B)-M_1(B)\right)&&
\end{array}
\]
\end{proof}

Consequently, the TSI measures the discrepancy between ordinary average persistence and persistence-weighted average persistence. Since \(M_2(B)\ge M_1(B)\), with equality if and only if all lifetimes are equal, uniform lifetimes imply \(M_1(B)=M_2(B)\), and therefore the TSI vanishes. As persistence becomes increasingly concentrated among a small number of dominant bars, the weighted average \(M_2(B)\) moves further away from the ordinary average \(M_1(B)\),
reflecting increasing heterogeneity in the barcode and producing larger TSI values.

\smallbreak 

The hierarchy also has a natural monotonicity property. In particular:

\begin{prop}
For every barcode \(B\) with nonnegative lifetimes and at least one positive lifetime,
\[
M_1(B)\leq M_2(B)\leq M_3(B)\leq \cdots .
\]
\end{prop}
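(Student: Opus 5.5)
Write $S_j:=\sum_{i=1}^n \ell_i^j$ for $j\ge 0$, with the convention $0^0=1$ so that $S_0=n$ and $M_1(B)=S_1/S_0$. Since all $\ell_i\ge 0$ and at least one is positive, every $S_j$ is strictly positive. Hence each $M_k(B)=S_k/S_{k-1}$ is well defined for all $k\ge 1$.

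The claim $M_k(B)\le M_{k+1}(B)$ is equivalent to
\[
S_k^2\le S_{k-1}S_{k+1}.
\]
I would prove this with the Cauchy--Schwarz inequality, in the same spirit as the proof of Theorem~\ref{thm:tsi_extremal}. Write $\ell_i^k=\ell_i^{(k-1)/2}\,\ell_i^{(k+1)/2}$. Then
\[
S_k=\sum_i \ell_i^{(k-1)/2}\ell_i^{(k+1)/2}\le \Bigl(\sum_i \ell_i^{k-1}\Bigr)^{1/2}\Bigl(\sum_i \ell_i^{k+1}\Bigr)^{1/2}=\sqrt{S_{k-1}S_{k+1}}.
\]
Squaring gives the desired inequality. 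Dividing by the positive quantity $S_{k-1}S_k$ then yields $M_k\le M_{k+1}$, and chaining over $k$ gives the full monotone sequence.

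The only point needing care is the case $k=1$, where zero lifetimes appear with exponent $0$. If the convention $0^0=1$ is used, the $k=1$ case reads $S_1^2\le n S_2$, which is exactly the Cauchy--Schwarz step already used in Theorem~\ref{thm:tsi_extremal}. For $k\ge 2$ the zero bars contribute nothing, and the argument above applies verbatim with all exponents nonnegative.

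As a byproduct, equality in Cauchy--Schwarz characterizes the equality cases. Equality holds exactly when the vectors $(\ell_i^{(k-1)/2})$ and $(\ell_i^{(k+1)/2})$ are proportional, that is, when all positive lifetimes are equal (and, for $k=1$, when no zero lifetimes are present). This recovers the earlier remark that $M_1=M_2$ iff all lifetimes are equal. I expect no real obstacle: the main step is recognizing the statement as log-convexity of the power sums $S_j$ in $j$.
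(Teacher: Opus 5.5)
Your proof is correct and follows the paper's argument: reduce $M_k\le M_{k+1}$ to $S_k^2\le S_{k-1}S_{k+1}$, then apply Cauchy--Schwarz to the vectors $(\ell_i^{(k-1)/2})$ and $(\ell_i^{(k+1)/2})$. Your explicit use of the convention $0^0=1$, which is needed for $S_0=n$ and hence $M_1=L^B/n$, and your equality-case analysis are small additions that the paper leaves implicit.
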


\begin{proof}
We prove that \(M_k(B)\leq M_{k+1}(B)\) for every \(k\geq 1\). Let \(S_k(B):=\sum_{i=1}^{n}\ell_i^k\). Then, in terms of power sums, the inequality is equivalent to
\[
\frac{S_k(B)}{S_{k-1}(B)}
\leq
\frac{S_{k+1}(B)}{S_k(B)}.
\]
Since all denominators are positive, this is equivalent to \( S_k(B)^2 \leq S_{k-1}(B)S_{k+1}(B)\). But this follows from the Cauchy--Schwarz inequality applied to the two vectors
\[
\left(\ell_1^{(k-1)/2},\ldots,\ell_n^{(k-1)/2}\right) \quad \text{and} \quad \left(\ell_1^{(k+1)/2},\ldots,\ell_n^{(k+1)/2}\right).
\]
Indeed,
\[
\left(
\sum_{i=1}^{n}
\ell_i^{(k-1)/2}\ell_i^{(k+1)/2}
\right)^2
\leq
\left(
\sum_{i=1}^{n}\ell_i^{k-1}
\right)
\left(
\sum_{i=1}^{n}\ell_i^{k+1}
\right).
\]
The left-hand side is exactly \(S_k(B)^2\), while the right-hand side is \(S_{k-1}(B)S_{k+1}(B)\). Since, \(M_k(B):=\frac{S_k(B)}{S_{k-1}(B)}\), we obtain
\[
M_k(B)\leq M_{k+1}(B),
\]
as required.
\end{proof}

This monotonicity confirms the interpretation of the hierarchy. The first moment describes average persistence. Higher moments increasingly emphasize
larger bars. Thus, the sequence \(M_1(B),M_2(B),M_3(B),\ldots\) moves from global average behavior toward dominant persistent features. The limiting case is the maximal lifetime.

\begin{prop}
Let \(\ell_{\max}:=\max_i \ell_i\). Then
\[
\lim_{k\to\infty} M_k(B)=\ell_{\max}.
\]
\end{prop}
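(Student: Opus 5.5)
The idea is to sandwich $M_k(B)$ between $\ell_{\max}$ and a quantity that converges to $\ell_{\max}$. By the standing hypotheses of the previous proposition, lifetimes are nonnegative and at least one is positive, so $\ell_{\max}>0$. Let $m\geq 1$ be the number of indices attaining the maximum, and write
\[
r_i:=\ell_i/\ell_{\max}\in[0,1].
\]
Since $0^0=1$, the denominator $S_{k-1}(B)$ is positive for every $k\geq 1$, so $M_k(B)$ is well defined. Factoring $\ell_{\max}^k$ from the numerator and $\ell_{\max}^{k-1}$ from the denominator gives
\[
M_k(B)=\ell_{\max}\,\frac{\sum_i r_i^{k}}{\sum_i r_i^{k-1}}.
\]
The proof then reduces to showing that the ratio of these normalized power sums tends to $1$.

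For the upper bound, note that $M_k(B)$ is a weighted average of the $\ell_i$ with the weights $w_i^{(k)}$ introduced before the definition. Hence $M_k(B)\leq \ell_{\max}$ for all $k$; equivalently, $r_i^k\leq r_i^{k-1}$ termwise.

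For the lower bound, split each sum into the $m$ maximal indices, where $r_i=1$, and the rest, where $0\leq r_i<1$. As $k\to\infty$, the numerator $m+\sum_{r_i<1} r_i^k$ tends to $m$. For $k\geq 2$ the denominator $m+\sum_{r_i<1} r_i^{k-1}$ also tends to $m$; a zero lifetime contributes $0^{k-1}=0$ once $k\geq 2$, so it causes no trouble. Since $m\geq 1$ and the sums are finite, the ratio tends to $1$, so $M_k(B)\to\ell_{\max}$. Alternatively, the monotonicity proposition shows that $M_k(B)$ is nondecreasing and bounded by $\ell_{\max}$, so the limit exists, and the split computation identifies it.

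If an explicit rate is wanted, bound the quantity directly. Let $\rho<1$ be the largest non-maximal ratio; if all bars are maximal, then $M_k(B)=\ell_{\max}$ for every $k$ and there is nothing to prove. Then
\[
\ell_{\max}-M_k(B)=\ell_{\max}\,\frac{\sum_i r_i^{k-1}(1-r_i)}{\sum_i r_i^{k-1}}\leq \ell_{\max}\,\frac{(n-m)\rho^{k-1}}{m}\to 0.
\]

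The only real obstacle is bookkeeping at the edges: zero lifetimes at $k=1$ (via the convention $0^0=1$) and ties at the maximum. Handling the maximal indices as a block of size $m$ takes care of both.
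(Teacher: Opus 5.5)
Your proof is correct and is essentially the paper's argument: after normalizing by powers of $\ell_{\max}$, the non-maximal ratios $(\ell_i/\ell_{\max})^{k}$ vanish, so numerator and denominator both tend to the multiplicity $m$ of the maximum. The additional pieces you supply, namely the bound $M_k(B)\leq \ell_{\max}$, the care with zero lifetimes via $0^0=1$, and the explicit rate $\ell_{\max}(n-m)\rho^{k-1}/m$, are all valid refinements that the paper omits.
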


\begin{proof}
Write
\[
M_k(B)=
\frac{\sum_i\ell_i^k}{\sum_i\ell_i^{k-1}}.
\]
Divide numerator and denominator by \(\ell_{\max}^{k-1}\). This gives
\[
M_k(B)
=
\frac{
\sum_i
\ell_i
\left(
\frac{\ell_i}{\ell_{\max}}
\right)^{k-1}
}
{
\sum_i
\left(
\frac{\ell_i}{\ell_{\max}}
\right)^{k-1}
}.
\]
If \(\ell_i<\ell_{\max}\), then
\[
\left(
\frac{\ell_i}{\ell_{\max}}
\right)^{k-1}
\to 0
\]
as \(k\to\infty\). The only terms that remain are those for which
\(\ell_i=\ell_{\max}\). Therefore the numerator converges to \(m\ell_{\max}\), and the denominator converges to \(m\), where \(m\) is the number of bars with maximal lifetime. Hence
\[
\lim_{k\to\infty}M_k(B)=\ell_{\max}.
\]
\end{proof}

The hierarchy further satisfies a natural scaling property. If \(c>0\), then
\[
M_k(cB)
=
cM_k(B).
\]
Indeed,
\[
M_k(cB)
=
\frac{\sum_i(c\ell_i)^k}{\sum_i(c\ell_i)^{k-1}}
=
c
\frac{\sum_i\ell_i^k}{\sum_i\ell_i^{k-1}}
=
cM_k(B).
\]

Hence every \(M_k\) is homogeneous of degree one. In this sense, the topological signal moments behave like scale descriptors, whereas the TSI, being variance-based, is homogeneous of degree two.

The hierarchy therefore provides a gradual transition from average persistence to dominant persistence features. Lower-order moments capture global characteristics of the barcode, whereas higher-order moments progressively emphasize the longest-lived topological structures.

\medskip

\paragraph{{\bf A Two-Dimensional Summary of Persistence Barcodes}}

The preceding relation
\[
\tsi(B)=\frac{L^B}{n-1} \left(M_2(B)-M_1(B)\right)
\]
shows that the TSI quantifies the discrepancy between ordinary average persistence and persistence-weighted average persistence. This naturally
suggests viewing \(TSI\) and \(TSigI\) as complementary descriptors. We therefore propose to view the pair
\((\mathrm{TSigI}(B),\mathrm{TSI}(B))\) as a joint summary capturing first- and second-order structure of persistence lifetimes (for an illustration see Figure~\ref{fig:noisy_circles_total}). Specifically, the pair \((\mathrm{TSigI}(B),\mathrm{TSI}(B))\) provides a two-dimensional description of the persistence barcode, encoding complementary aspects of its structure:
\begin{itemize}
    \item[i.] $\mathrm{TSigI}$ quantifies the typical strength of persistence lifetimes (signal strength),
    \item[ii.] $\mathrm{TSI}$ quantifies the dispersion of  lifetimes (structural variability).
\end{itemize}

Together, these quantities encode both the magnitude and the heterogeneity of topological features. In particular:
\begin{itemize}
    \item[-] high $\mathrm{TSigI}$ and low $\mathrm{TSI}$ indicate concentration of persistence around a dominant scale,
    \item[-] low $\mathrm{TSigI}$ and low $\mathrm{TSI}$ indicate low total persistence and absence of dominant features,
    \item[-] high $\mathrm{TSI}$ indicates significant variability in persistence lifetimes, corresponding to multiscale structure or structural transitions.
\end{itemize}

The quantity $\mathrm{TSigI}(B)$ is closely related to energy-type functionals appearing in persistence landscapes \cite{bubenik_statistical_2015}, where integrals of the form $\int \lambda(t)^2 \, dt$ encode second-order persistence information. In this sense, $\mathrm{TSigI}$ may be viewed as a scalar, lifetime-based analogue of such functional summaries.

\section{Relation to Persistence Entropy}\label{Sec:Relation to Persistence Entropy}
Persistent entropy is a widely used scalar summary of persistence barcodes, designed to capture the distribution of persistence lifetimes in a scale-invariant way \cite{rucco_characterisation_2016,atienza_persistent_2019,atienza_stability_2020}. In this section, we compare the Topological Stability Index with persistent entropy, and show that a normalized version of the TSI admits an exact algebraic relation to the Rényi entropy of order two. This connection clarifies the mathematical meaning of the TSI and places it within the broader family of entropy-based persistence summaries.


\subsection{Persistent Entropy and Conceptual Comparison}

\begin{definition}[\cite{atienza_persistent_2019}]
Let $B=\{[b_i,d_i)\}_{i=1}^{n^B}$ be a barcode with total persistence $L^B>0$, and let $\ell_i=d_i-b_i$ denote the corresponding bar lengths. Define
\[
L^B:=\sum_{i=1}^{n^B}\ell_i,
\qquad
p_i:=\frac{\ell_i}{L^B}.
\]
The \emph{persistent entropy} of $B$ is
\begin{equation}
E(B):=-\sum_{i=1}^{n^B} p_i \log p_i.
\end{equation}
\end{definition}

Both TSI and persistent entropy summarize the distribution of persistence lifetimes, but they emphasize different aspects of that distribution. The TSI measures the dispersion of lifetimes around their mean, and is therefore sensitive to absolute scale and heterogeneity. In contrast, persistent entropy depends only on the normalized weights $p_i$, and thus measures the relative distribution of persistence mass. Consequently, the TSI is not scale invariant, while persistent entropy is invariant under uniform scaling of all lifetimes.

\begin{prop}
There exist barcodes $B_1$ and $B_2$ such that
\[
E(B_1)=E(B_2)
\quad \text{but} \quad
\tsi(B_1)\neq \tsi(B_2).
\]
\end{prop}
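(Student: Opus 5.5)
The plan is an explicit construction based on the mismatch in scaling behaviour already noted: persistent entropy depends only on the normalized weights $p_i=\ell_i/L^B$, so it is invariant under uniform scaling. By Proposition~\ref{prop:scalar_mult_tsi}, however, $\tsi(cB)=c^2\tsi(B)$. It therefore suffices to take a barcode $B_1$ with $\tsi(B_1)\neq 0$ and set $B_2:=cB_1$ for some $c>0$ with $c\neq 1$.

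The first step is to verify scale invariance of $E$. If $B_2=cB_1$ with $c>0$, then the lifetimes are $c\ell_i$ and the total persistence is $cL^{B_1}$. Hence the weights $p_i=c\ell_i/(cL^{B_1})=\ell_i/L^{B_1}$ are unchanged, and so $E(B_2)=E(B_1)$. Here $L^{B_1}>0$ is needed so that $E$ is defined.

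The second step is to fix a concrete $B_1$ with nonzero TSI. Take $B_1=\{[0,1),[0,2)\}$, so $\mathcal{L}_{B_1}=\{1,2\}$. The unbiased variance formula gives $\tsi(B_1)=(1/4+1/4)/1=1/2$. With $c=2$, i.e.\ $B_2=\{[0,2),[0,4)\}$, Proposition~\ref{prop:scalar_mult_tsi} gives $\tsi(B_2)=4\cdot\tfrac12=2\neq\tfrac12$. The entropies agree: $E(B_1)=E(B_2)=-\tfrac13\log\tfrac13-\tfrac23\log\tfrac23$.

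There is no real obstacle. The only point needing care is to choose $B_1$ with unequal lifetimes. By Theorem~\ref{thm:tsi_extremal}(1), a barcode with equal lifetimes has $\tsi=0$, and then scaling could not separate the two values. A second valid route is to use the death-shift of Proposition~\ref{prop:length_translate_tsi}, which preserves TSI, but that changes $E$. So the scaling construction is the one to use.
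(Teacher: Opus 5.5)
Your proof is correct and is essentially the paper's argument. The paper also takes lifetimes $(1,2)$ and $(2,4)$, notes the common weights $(1/3,2/3)$ and hence equal entropy, and then uses Proposition~\ref{prop:scalar_mult_tsi} to get $\tsi(B_2)=4\,\tsi(B_1)\neq\tsi(B_1)$ since $\tsi(B_1)>0$. Your closing aside is slightly garbled: a death-shift preserves $\tsi$, so it is not a ``second valid route'' at all, though this does not affect your main argument.
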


\begin{proof}
Consider two barcodes whose lifetimes are given by
\[
B_1=(1,2), \qquad B_2=(2,4).
\]
They induce the same normalized weights,
\[
\left(\frac13,\frac23\right),
\]
and hence
\[
E(B_1)=E(B_2).
\]
However, since $B_2=2B_1$, Proposition~\ref{prop:scalar_mult_tsi} gives
\[
\tsi(B_2)=4\,\tsi(B_1).
\]
Since $\tsi(B_1)>0$, it follows that
\[
\tsi(B_1)\neq \tsi(B_2).
\]
\end{proof}

The example in the proof is depicted in Figure~\ref{fig:entropy_same_scale_diff_tsi}.

\begin{figure}[htbp]
\centering
\begin{subfigure}[t]{0.49\textwidth}
    \centering
    \includegraphics[height=2in]{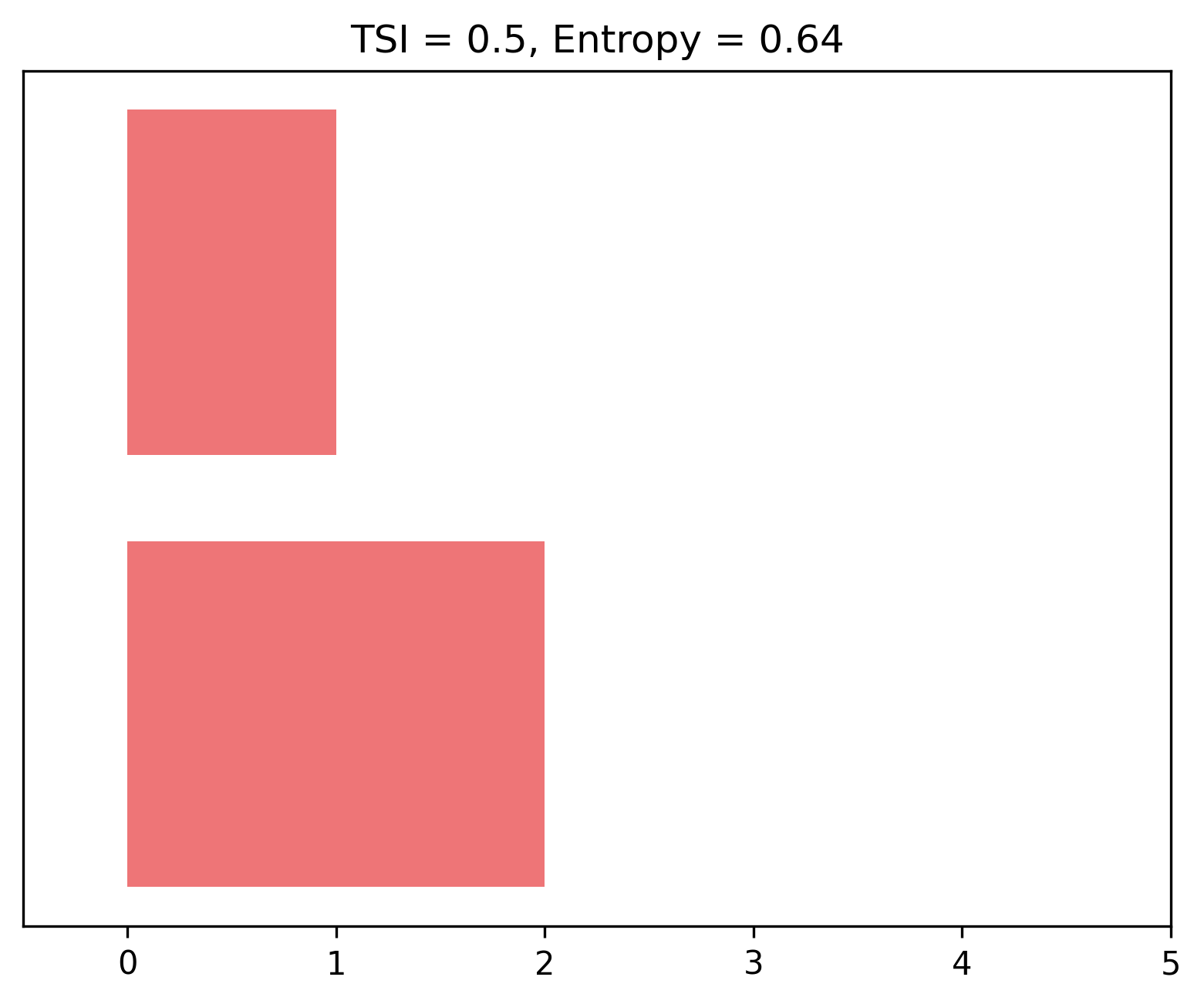}
    \caption{A barcode with lifetimes $(1,2)$.}
\end{subfigure}
~
\begin{subfigure}[t]{0.49\textwidth}
    \centering
    \includegraphics[height=2in]{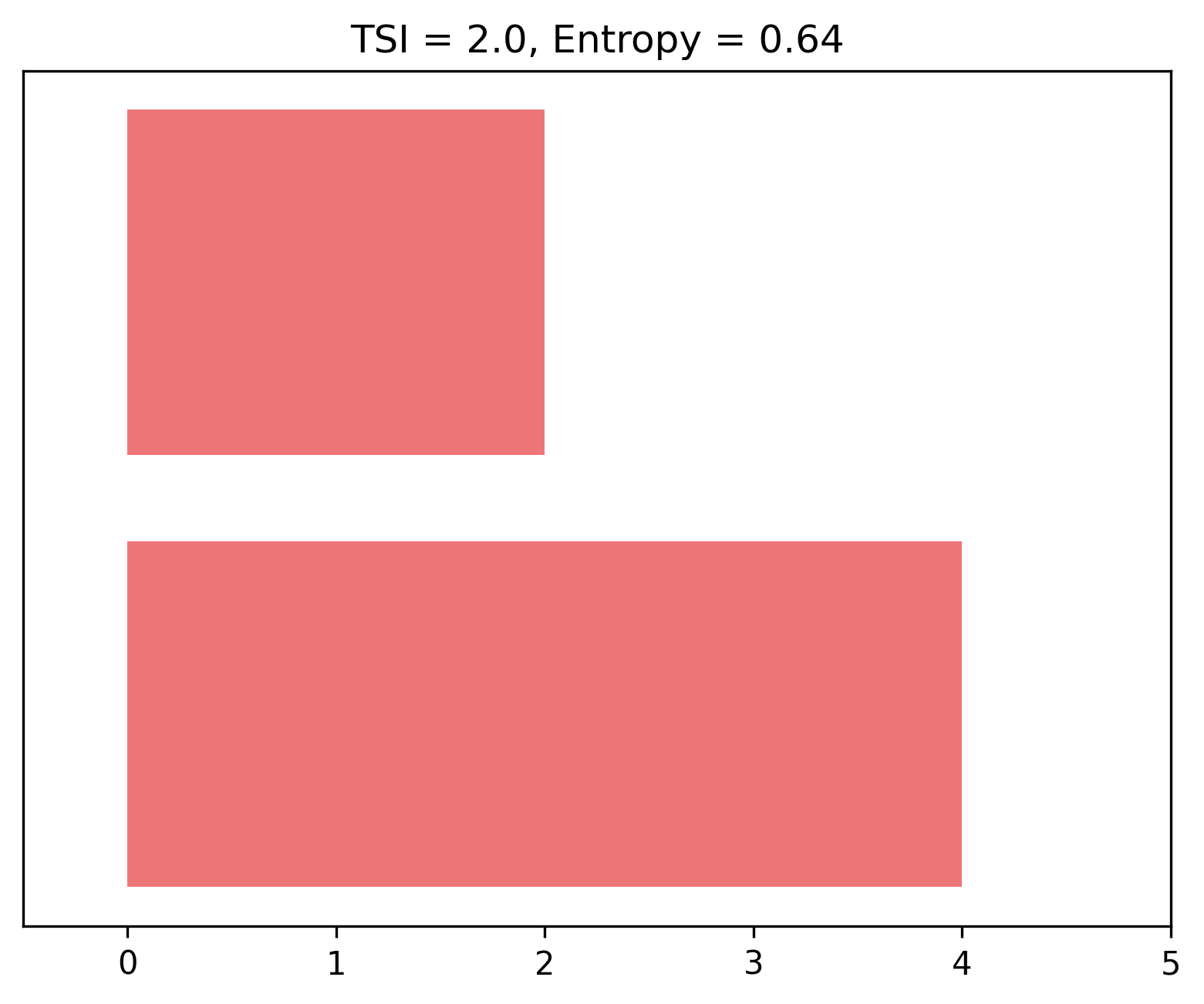}    \caption{The same barcode scaled by a factor of two.}
\end{subfigure}
\caption{Two barcodes with the same normalized lifetime distribution $(1/3,2/3)$ and therefore the same persistent entropy, but different total scale. The right barcode is obtained by uniformly scaling the lifetimes of the left by a factor of two. Persistent entropy is invariant under such scaling, whereas the TSI scales quadratically.}
\label{fig:entropy_same_scale_diff_tsi}
\end{figure}

A second important distinction concerns perturbations that change the number of bars. As shown in Section~\ref{Sec:The Topological Stability Index}, the TSI is sensitive to the insertion of new bars, even when the inserted bar has arbitrarily small length. Persistent entropy behaves differently: it is continuous under this type of perturbation \cite{atienza_stability_2020}.

Indeed, suppose a new bar of length $\ell$ is added to $B$. Then the normalized weights become
\[
\tilde p_i=\frac{\ell_i}{L^B+\ell},
\qquad
p=\frac{\ell}{L^B+\ell}.
\]
As $\ell\to 0$, one has $\tilde p_i\to p_i$ and $p\to 0$. Therefore,
\[
E(B\cup\{[b,b+\ell)\})
=
-\sum_{i=1}^{n^B}\tilde p_i\log \tilde p_i - p\log p
\longrightarrow
-\sum_{i=1}^{n^B}p_i\log p_i
=
E(B),
\]
since $x\log x\to 0$ as $x\to 0^+$.

Thus, persistent entropy provides a scale-invariant and insertion-continuous summary, whereas the TSI retains absolute information about the spread of lifetimes and is correspondingly more sensitive to local modifications of the barcode.


\subsection{Normalized TSI and Rényi Entropy}

To better relate TSI to entropy-based summaries, we introduce a normalized version.

\begin{definition}
Let $B$ be a barcode with $n^B\geq 1$ bars and total persistence $L^B>0$. The \emph{normalized Topological Stability Index} is defined by
\begin{equation}
cv\tsi(B):=\frac{\tsi(B)}{\mathbb{E}[\mathcal L_B]^2},
\end{equation}
where $\mathbb{E}[\mathcal L_B]=L^B/n^B$ is the mean bar length.
\end{definition}

Using the explicit formula for the TSI, we immediately obtain the following expression.

\begin{prop}\label{prop:cvtsi_explicit}
For any barcode $B$ with $n^B\geq 2$,
\begin{equation}
cv\tsi(B)
=
\frac{1}{n^B-1}\sum_{i=1}^{n^B}\left(n^B\frac{\ell_i}{L^B}-1\right)^2.
\end{equation}
Equivalently, in terms of the normalized weights $p_i=\ell_i/L^B$,
\begin{equation}
cv\tsi(B)
=
\frac{1}{n^B-1}\sum_{i=1}^{n^B}(n^Bp_i-1)^2.
\end{equation}
\end{prop}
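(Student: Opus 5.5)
The identity is a direct computation from the definition of $cv\tsi$ together with the first explicit formula for the TSI in the Remark following the definition of the TSI. Write $n=n^B$ and $L=L^B$. Since $n\geq 2$ and $L>0$, that Remark gives
\[
\tsi(B)=\frac{1}{n-1}\sum_{i=1}^{n}\left(\ell_i-\frac{L}{n}\right)^2 .
\]
The mean bar length is $\mathbb{E}[\mathcal L_B]=L/n>0$, so division by its square is legitimate.

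Dividing each summand by $(L/n)^2$ brings the normalization inside the square:
\[
\frac{\left(\ell_i-\frac{L}{n}\right)^2}{\left(\frac{L}{n}\right)^2}
=\left(\frac{n\ell_i}{L}-1\right)^2 .
\]
Summing over $i$ and keeping the prefactor $\frac{1}{n-1}$ yields the first displayed expression for $cv\tsi(B)$. Substituting $p_i=\ell_i/L$ then gives the second form verbatim.

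As an alternative route, one could write $cv\tsi(B)=\tsi(B/c)$ with $c=L/n$ and invoke Proposition~\ref{prop:scalar_mult_tsi}. Here $B/c$ denotes the barcode with every filtration value divided by $c$. Its bar lengths are $n p_i$, with mean $1$, so the TSI formula from the Remark produces the same expression immediately.

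I do not expect a genuine obstacle. The only points to state explicitly are the hypotheses $L>0$, which makes $p_i$ and the normalization well defined, and $n\geq 2$, which avoids the convention $\tsi=0$ for $n\leq 1$ and the zero denominator $n-1$.
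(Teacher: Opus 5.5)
Your proposal is correct and follows the paper's own proof: both divide the explicit unbiased-variance formula for $\tsi(B)$ termwise by $(L^B/n^B)^2$, pulling the normalization inside each square, and then substitute $p_i=\ell_i/L^B$. Your alternative via Proposition~\ref{prop:scalar_mult_tsi} applied to $B/c$ with $c=L^B/n^B$ is also valid, and you rightly make explicit the hypothesis $L^B>0$, which the statement leaves implicit in the definition of $cv\tsi$.
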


\begin{proof}
By definition,
\[
cv\tsi(B)
=
\frac{\tsi(B)}{(L^B/n^B)^2}.
\]
Using the explicit formula for $\tsi(B)$ from Section~\ref{Sec:The Topological Stability Index},
\[
\tsi(B)
=
\frac{1}{n^B-1}\sum_{i=1}^{n^B}\left(\ell_i-\frac{L^B}{n^B}\right)^2,
\]
we obtain
\begin{align*}
cv\tsi(B)
&=
\frac{1}{n^B-1}
\sum_{i=1}^{n^B}
\frac{\left(\ell_i-\frac{L^B}{n^B}\right)^2}{(L^B/n^B)^2}\\
&=
\frac{1}{n^B-1}
\sum_{i=1}^{n^B}
\left(\frac{n^B\ell_i}{L^B}-1\right)^2,
\end{align*}
which proves the result.
\end{proof}

\begin{theorem}\label{thm:cvtsi_bounds}
Let $B$ be a barcode with $n^B=n\geq 2$ and $L^B>0$. Then
\[
0\leq cv\tsi(B)\leq n.
\]
Moreover:
\begin{enumerate}
    \item $cv\tsi(B)=0$ if and only if all bar lengths are equal.
    \item $cv\tsi(B)=n$ if and only if, up to permutation, one bar has length $L^B$ and the remaining $n-1$ bars have length $0$.
\end{enumerate}
\end{theorem}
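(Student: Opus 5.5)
The plan is to reduce everything to Theorem~\ref{thm:tsi_extremal} via the definition $cv\tsi(B)=\tsi(B)/(L/n)^2$, where $L=L^B>0$. Dividing a quantity by the fixed positive constant $(L/n)^2$ preserves inequalities and equality cases. Theorem~\ref{thm:tsi_extremal} gives $0\le \tsi(B)\le L^2/n$. Dividing through yields
\[
0\le cv\tsi(B)\le \frac{L^2/n}{L^2/n^2}=n .
\]
This settles the bounds.

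For the equality cases, note first that $cv\tsi(B)=0$ if and only if $\tsi(B)=0$, since the denominator is positive. By part (1) of Theorem~\ref{thm:tsi_extremal}, this happens exactly when all lengths equal $L/n$, that is, when all bar lengths are equal. Similarly, $cv\tsi(B)=n$ if and only if $\tsi(B)=L^2/n$. By part (2) of Theorem~\ref{thm:tsi_extremal}, this happens exactly when, up to permutation, the lengths are $(L,0,\ldots,0)$.

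As a cross-check, or as an alternative self-contained route, one can use Proposition~\ref{prop:cvtsi_explicit}. Expanding $\sum_i (np_i-1)^2 = n^2\sum_i p_i^2 - 2n + n$ with $\sum_i p_i=1$ gives
\[
cv\tsi(B)=\frac{n}{n-1}\left(n\sum_i p_i^2-1\right).
\]
The claims then follow from $1/n\le \sum_i p_i^2\le 1$ for a probability vector. The lower bound is Cauchy--Schwarz, with equality iff all $p_i=1/n$. The upper bound uses $p_i^2\le p_i$, with equality iff each $p_i\in\{0,1\}$.

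There is no real obstacle here. The only point needing care is that the hypothesis $L^B>0$ is what makes the normalization legitimate and equivalence-preserving. The theorem also implicitly allows zero-length bars, as in Theorem~\ref{thm:tsi_extremal}; otherwise the upper bound is not attained.
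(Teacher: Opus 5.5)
Your proposal is correct and matches the paper's proof: divide the bounds of Theorem~\ref{thm:tsi_extremal} by the positive constant $(L^B/n)^2$, and read the equality cases off from that theorem. Your alternative check via $1/n\le\sum_i p_i^2\le 1$ is also valid, and you are right that attaining the upper bound requires allowing zero-length bars.
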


\begin{proof}
By definition,
\[
cv\tsi(B)=\frac{\tsi(B)}{(L^B/n)^2}.
\]
Applying Theorem~\ref{thm:tsi_extremal}, we get
\[
0\leq \tsi(B)\leq \frac{(L^B)^2}{n}.
\]
Dividing through by $(L^B/n)^2$ yields
\[
0\leq cv\tsi(B)\leq \frac{(L^B)^2/n}{(L^B)^2/n^2}=n.
\]
The equality cases follow immediately from those of Theorem~\ref{thm:tsi_extremal}.
\end{proof}

\begin{remark}\rm
For a barcode $B$ with $n$ bars, the normalized quantity $\frac{cv\tsi(B)}{n}$ takes values in $[0,1]$ and admits a natural interpretation in terms of the distribution of persistence lifetimes:
\begin{itemize}
    \item[-] Values close to $0$ correspond to nearly uniform lifetimes. 
    \item[-] Values close to $1$ correspond to highly concentrated persistence.
    \item[-] Intermediate values (around $1/2$) indicate a moderate level of heterogeneity.
\end{itemize}
Thus, $\frac{cv\tsi(B)}{n}$ can be viewed as normalized measure of concentration for the lifetime distribution. 
\end{remark}

\begin{corollary}
For fixed $n$, the normalized TSI is smallest when the barcode lifetimes are perfectly uniform and largest when the persistence mass is maximally concentrated. Equivalently, $cv\tsi(B)$ is a concentration index for the normalized lifetime distribution $(p_1,\dots,p_n)$.
\end{corollary}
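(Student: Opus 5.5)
The corollary is essentially a restatement of Theorem~\ref{thm:cvtsi_bounds}, so the plan is to read it off from that theorem and then justify the ``concentration index'' reformulation using Proposition~\ref{prop:cvtsi_explicit}.

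\textbf{Extremal part.} Fix $n\geq 2$ and consider barcodes with $L^B>0$. Theorem~\ref{thm:cvtsi_bounds} gives $0\leq cv\tsi(B)\leq n$. It also gives the equality cases.
\begin{itemize}
\item The lower bound $0$ is attained exactly when all lifetimes are equal, i.e. $p_i=1/n$ for all $i$.
\item The upper bound $n$ is attained exactly when one $p_j=1$ and the rest vanish.
\end{itemize}
This is precisely the statement that the minimum is achieved by the perfectly uniform barcode and the maximum by the maximally concentrated one.

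\textbf{Dependence on $p$ only.} By Proposition~\ref{prop:cvtsi_explicit}, $cv\tsi(B)=\frac{1}{n-1}\sum_i (np_i-1)^2$. Hence $cv\tsi$ depends on $B$ only through the normalized weight vector $(p_1,\dots,p_n)$ in the simplex. It is therefore a well-defined function on probability vectors, invariant under scaling the barcode.

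\textbf{Collision-probability form.} Expanding the square and using $\sum_i p_i=1$ gives
\[
cv\tsi(B)=\frac{1}{n-1}\left(n^2\sum_i p_i^2-2n+n\right)=\frac{n}{n-1}\left(n\sum_i p_i^2-1\right).
\]
This is an increasing affine function of the collision probability $\sum_i p_i^2$, which is the standard concentration measure (the Herfindahl index). From this form one sees the following.
\begin{itemize}
\item $cv\tsi$ is permutation invariant.
\item It is Schur-convex, since $\sum_i p_i^2$ is convex and symmetric. Consequently, moving mass from a smaller weight to a larger weight (a reverse Robin Hood transfer) increases it.
\end{itemize}
These are exactly the axioms one expects of a concentration index, which justifies the word ``equivalently''.

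\textbf{Main obstacle.} The only real issue is interpretive: ``concentration index'' has no formal definition in the paper. I would fix it as ``a symmetric, Schur-convex function of $p$, minimized at the uniform distribution and maximized at the vertices of the simplex,'' and verify these properties through the affine formula above. Everything else is immediate from results already established.
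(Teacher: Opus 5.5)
Your proposal is correct and follows the paper's route. The paper treats this corollary as an immediate consequence of Theorem~\ref{thm:cvtsi_bounds}, and it justifies the concentration-index reading through the dependence on the weights $p_i$ alone and the collision-probability identity of Proposition~\ref{prop:collision_identity}, which you rederive directly; your Schur-convexity and transfer argument is a sound extra that the paper does not state.
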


\begin{remark}\rm 
    The preceding theorem and corollary demonstrate that the normalized TSI captures the dispersion of a barcode independently of the total bar length. Nevertheless, its scale remains dependent on the number of bars, $n$. Consequently, caution is required when using $cv\tsi$ to compare barcodes containing significantly different numbers of bars, as their scales may not be directly comparable. In such cases, the quantity $\frac{cv\tsi}{n}$ serves as a more suitable metric because it is strictly bounded within the interval [0, 1]. Specifically, a value of 0 is achieved if and only if all bars are of uniform length, whereas a value of 1 occurs exclusively when a single bar accounts for the entirety of the total persistence.
\end{remark}

The normalized quantity $cv\tsi(B)$ depends only on the proportions $p_i=\ell_i/L^B$, and is therefore scale invariant. More importantly, it admits an exact algebraic relation to the collision probability $\sum_i p_i^2$.

\begin{prop}\label{prop:collision_identity}
Let $B$ be a barcode with $n^B\geq 2$ and $L^B>0$, and let $p_i=\ell_i/L^B$. Then
\begin{equation}
\sum_{i=1}^{n^B} p_i^2
=
\frac{1}{n^B}
+
\frac{n^B-1}{(n^B)^2}cv\tsi(B).
\end{equation}
\end{prop}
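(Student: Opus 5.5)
The plan is to start from the explicit formula of Proposition~\ref{prop:cvtsi_explicit}, namely
\[
cv\tsi(B)=\frac{1}{n-1}\sum_{i=1}^{n}(np_i-1)^2,
\qquad n=n^B,\quad p_i=\ell_i/L^B .
\]
Since $L^B>0$, the weights $p_i$ are well defined and satisfy $\sum_i p_i=1$. The whole argument then reduces to expanding the square and solving for $\sum_i p_i^2$.

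The key step is the expansion
\[
\sum_{i=1}^n (np_i-1)^2 = n^2\sum_{i=1}^n p_i^2 - 2n\sum_{i=1}^n p_i + n = n^2\sum_{i=1}^n p_i^2 - n,
\]
where the normalization $\sum_i p_i=1$ is used in the middle term. This gives
\[
(n-1)\,cv\tsi(B)=n^2\sum_i p_i^2-n.
\]
Dividing by $n^2$ and rearranging yields
\[
\sum_i p_i^2=\frac1n+\frac{n-1}{n^2}\,cv\tsi(B),
\]
which is exactly the claimed identity.

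There is no real obstacle here; the only point to handle carefully is the use of $\sum_i p_i=1$, which is where the hypothesis $L^B>0$ enters. The hypothesis $n\ge 2$ ensures that the factor $1/(n-1)$ in Proposition~\ref{prop:cvtsi_explicit} makes sense.

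As a sanity check, I would verify the identity against Theorem~\ref{thm:cvtsi_bounds}. Uniform lifetimes give $\sum_i p_i^2=1/n$ and $cv\tsi(B)=0$. Full concentration gives $\sum_i p_i^2=1$ and $cv\tsi(B)=n$. Both cases are consistent with the formula.
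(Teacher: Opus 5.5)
Your proposal is correct and follows the paper's proof: start from Proposition~\ref{prop:cvtsi_explicit}, expand the square, use $\sum_i p_i=1$ to obtain $(n-1)\,cv\tsi(B)=n^2\sum_i p_i^2-n$, and rearrange. Your sanity check against the extremal cases of Theorem~\ref{thm:cvtsi_bounds} is a correct extra that the paper does not include.
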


\begin{proof}
Starting from Proposition~\ref{prop:cvtsi_explicit},
\begin{align*}
cv\tsi(B)
&=
\frac{1}{n^B-1}\sum_{i=1}^{n^B}(n^Bp_i-1)^2\\
&=
\frac{1}{n^B-1}\sum_{i=1}^{n^B}\left((n^B)^2p_i^2-2n^Bp_i+1\right).
\end{align*}
Since $\sum_i p_i=1$, this becomes
\begin{align*}
cv\tsi(B)
&=
\frac{1}{n^B-1}
\left((n^B)^2\sum_{i=1}^{n^B}p_i^2-2n^B+n^B\right)\\
&=
\frac{1}{n^B-1}
\left((n^B)^2\sum_{i=1}^{n^B}p_i^2-n^B\right).
\end{align*}
Rearranging yields
\[
(n^B)^2\sum_{i=1}^{n^B}p_i^2
=
n^B+(n^B-1)cv\tsi(B),
\]
and hence
\[
\sum_{i=1}^{n^B} p_i^2
=
\frac{1}{n^B}
+
\frac{n^B-1}{(n^B)^2}cv\tsi(B).
\]
\end{proof}

\begin{prop}\label{prop:cvtsi_l2_uniform}
Let $B$ be a barcode with $n^B\geq 2$ and $L^B>0$. Let $u=(1/n^B,\dots,1/n^B)$ and let $p=(p_1,\dots,p_{n^B})$ be the normalized lifetime distribution. Then
\[
cv\tsi(B)=\frac{(n^B)^2}{n^B-1}\|p-u\|_2^2.
\]
In particular, $cv\tsi(B)$ is a rescaled squared Euclidean distance from the uniform distribution.
\end{prop}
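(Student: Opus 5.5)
The plan is to start from the explicit formula in Proposition~\ref{prop:cvtsi_explicit} and rewrite each summand in terms of the deviation $p_i-1/n^B$. Writing $n=n^B$, the key observation is the elementary factorization
\[
n p_i - 1 = n\left(p_i-\frac{1}{n}\right) = n\,(p_i-u_i),
\]
where $u_i=1/n$ is the $i$-th coordinate of the uniform vector $u$.

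Squaring gives $(np_i-1)^2 = n^2 (p_i-u_i)^2$. Summing over $i$ then yields
\[
\sum_{i=1}^n (np_i-1)^2 = n^2\sum_{i=1}^n (p_i-u_i)^2 = n^2\|p-u\|_2^2 .
\]
Dividing by $n-1$, as in Proposition~\ref{prop:cvtsi_explicit}, gives exactly $cv\tsi(B)=\frac{n^2}{n-1}\|p-u\|_2^2$. The hypotheses $n\geq 2$ and $L^B>0$ are needed only so that $p$ and $cv\tsi$ are well defined and the denominator $n-1$ is nonzero.

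As a consistency check, one can also derive the identity from Proposition~\ref{prop:collision_identity}. Since $\sum_i p_i=1$,
\[
\|p-u\|_2^2=\sum_i p_i^2-\frac{2}{n}\sum_i p_i+\frac{1}{n}=\sum_i p_i^2-\frac1n.
\]
By Proposition~\ref{prop:collision_identity}, this equals $\frac{n-1}{n^2}cv\tsi(B)$, which is the same identity.

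There is no real obstacle here. The only point requiring care is keeping the normalization constants straight, namely the factor $n^2$ coming from the factorization and the $1/(n-1)$ coming from the unbiased variance. The final sentence of the statement, that $cv\tsi$ is a rescaled squared Euclidean distance to $u$, is then just a reading of the formula.
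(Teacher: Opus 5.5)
Your proof is correct. Your main route differs slightly from the paper's, which argues exactly as in your consistency check. The paper expands $\|p-u\|_2^2=\sum_i p_i^2-\frac{1}{n^B}$ using $\sum_i p_i=1$ and then substitutes Proposition~\ref{prop:collision_identity}. That collision identity was itself obtained by expanding $(n^Bp_i-1)^2$ and using $\sum_i p_i=1$, so the paper's proof in effect expands the square and then recombines it. Your termwise factorization $n^Bp_i-1=n^B(p_i-u_i)$ reads the result off Proposition~\ref{prop:cvtsi_explicit} in one step and does not even need the normalization $\sum_i p_i=1$. The paper's route keeps the collision probability $\sum_i p_i^2$ in view, which it needs next for the R\'enyi-entropy relation. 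For this statement alone, your direct factorization is the cleaner proof.
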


\begin{proof}
Since $u_i=1/n^B$, we have
\[
\|p-u\|_2^2
=
\sum_{i=1}^{n^B}\left(p_i-\frac{1}{n^B}\right)^2
=
\sum_{i=1}^{n^B}p_i^2-\frac{2}{n^B}\sum_{i=1}^{n^B}p_i+\frac{1}{n^B}.
\]
Using $\sum_i p_i=1$, this becomes
\[
\|p-u\|_2^2
=
\sum_{i=1}^{n^B}p_i^2-\frac{1}{n^B}.
\]
By Proposition~\ref{prop:collision_identity},
\[
cv\tsi(B)
=
\frac{(n^B)^2}{n^B-1}\left(\sum_{i=1}^{n^B}p_i^2-\frac{1}{n^B}\right)
=
\frac{(n^B)^2}{n^B-1}\|p-u\|_2^2.
\]
This proves the claim.
\end{proof}

We now recall the Rényi entropy of order $\alpha$ \cite{renyi1961}.

\begin{definition}
Let $(p_1,\dots,p_n)$ be a probability vector and let $\alpha>0$, $\alpha\neq 1$. The \emph{Rényi entropy of order $\alpha$} is defined by
\begin{equation}
H_\alpha(p_1,\dots,p_n)
:=
\frac{1}{1-\alpha}\log\left(\sum_{i=1}^n p_i^\alpha\right).
\end{equation}
For a barcode $B$, we write $H_\alpha(B)$ for the Rényi entropy of the associated weights $p_i=\ell_i/L^B$.
\end{definition}

In the case $\alpha=2$, this becomes
\[
H_2(B)=-\log\left(\sum_{i=1}^{n^B}p_i^2\right).
\]
Combining this with Proposition~\ref{prop:collision_identity}, we obtain the exact link between $cv\tsi$ and Rényi entropy.

\begin{corollary}\label{cor:renyi_relation}
For any barcode $B$ with $n^B\geq 2$ and $L^B>0$,
\begin{equation}
H_2(B)
=
-\log\left(
\frac{1}{n^B}
+
\frac{n^B-1}{(n^B)^2}cv\tsi(B)
\right).
\end{equation}
Equivalently,
\begin{equation}
cv\tsi(B)
=
\frac{(n^B)^2}{n^B-1}
\left(
e^{-H_2(B)}-\frac{1}{n^B}
\right).
\end{equation}
\end{corollary}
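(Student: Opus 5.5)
The plan is to derive Corollary~\ref{cor:renyi_relation} directly from Proposition~\ref{prop:collision_identity} and the definition of the Rényi entropy of order two. With $n=n^B$, the Rényi definition at $\alpha=2$ gives
\[
H_2(B)=\frac{1}{1-2}\log\Bigl(\sum_i p_i^2\Bigr)=-\log\Bigl(\sum_i p_i^2\Bigr).
\]
This is well defined because $L^B>0$ makes $p$ a genuine probability vector. Moreover $\sum_i p_i^2\geq 1/n>0$, which follows from Cauchy--Schwarz as in Theorem~\ref{thm:tsi_extremal}, or directly from Proposition~\ref{prop:collision_identity} together with $cv\tsi(B)\geq 0$ (Theorem~\ref{thm:cvtsi_bounds}). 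So the logarithm is finite.

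Next I substitute the affine expression of Proposition~\ref{prop:collision_identity} for $\sum_i p_i^2$ into this formula. That substitution gives the first displayed identity immediately.

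For the equivalent form, I exponentiate to get $e^{-H_2(B)}=\sum_i p_i^2=\frac1n+\frac{n-1}{n^2}cv\tsi(B)$. I then solve this linear equation for $cv\tsi(B)$: subtract $1/n$ and multiply by $n^2/(n-1)$, which is legitimate since $n\geq 2$.

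There is no real obstacle here. The only points to check are positivity of the argument of the logarithm and that $n-1\neq 0$, and both are covered by the hypotheses $n^B\geq 2$ and $L^B>0$. As a consistency check, I will also note that the map $cv\tsi\mapsto H_2$ is strictly decreasing, since it is $-\log$ of an increasing affine function. This confirms the monotone-reparametrization interpretation: $cv\tsi=0$ corresponds to $H_2=\log n$, and $cv\tsi=n$ corresponds to $H_2=0$.
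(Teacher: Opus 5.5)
Your proposal is correct and follows the paper's argument: the paper likewise writes $H_2(B)=-\log\bigl(\sum_i p_i^2\bigr)$, substitutes the affine collision identity of Proposition~\ref{prop:collision_identity}, and obtains the second form by exponentiating and solving for the normalized index. Your extra checks, that $\sum_i p_i^2\ge 1/n^B>0$ and $n^B-1\neq 0$, are correct details that the paper leaves implicit.
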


Corollary~\ref{cor:renyi_relation} shows that the normalized TSI is not merely related to the Rényi entropy of order two, but it is in fact a monotone reparametrization of it. In particular, $cv\tsi(B)$ and $H_2(B)$ contain exactly the same information about the distribution of normalized persistence lifetimes. This relationship is illustrated in Figure~\ref{fig:cvtsi_h2_extremal}. However, the two quantities emphasize different aspects of this information. The Rényi entropy is logarithmic (and therefore it compresses differences near highly concentrated configurations), while $cv\tsi(B)$ depends linearly on $e^{-H_2(B)}$ (and thus provides a more sensitive measure). This distinction is illustrated in Figure~\ref{fig:sampled_circles_alpha}(b).

\begin{figure}[htbp]
\centering
\begin{subfigure}[t]{0.49\textwidth}
    \centering
    \includegraphics[height=2in]{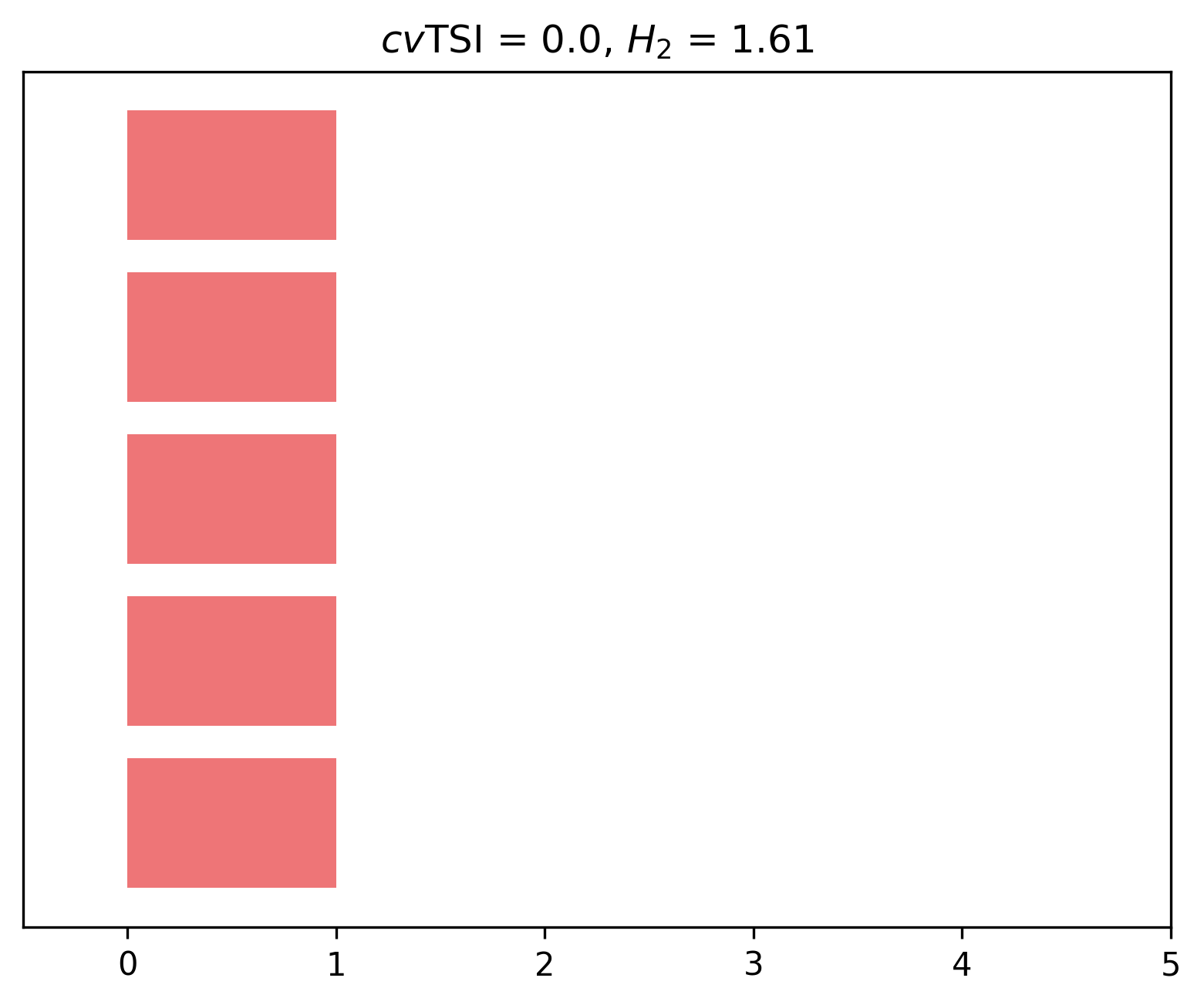}
    \caption{Uniform lifetime distribution.}
\end{subfigure}
~
\begin{subfigure}[t]{0.49\textwidth}
    \centering
    \includegraphics[height=2in]{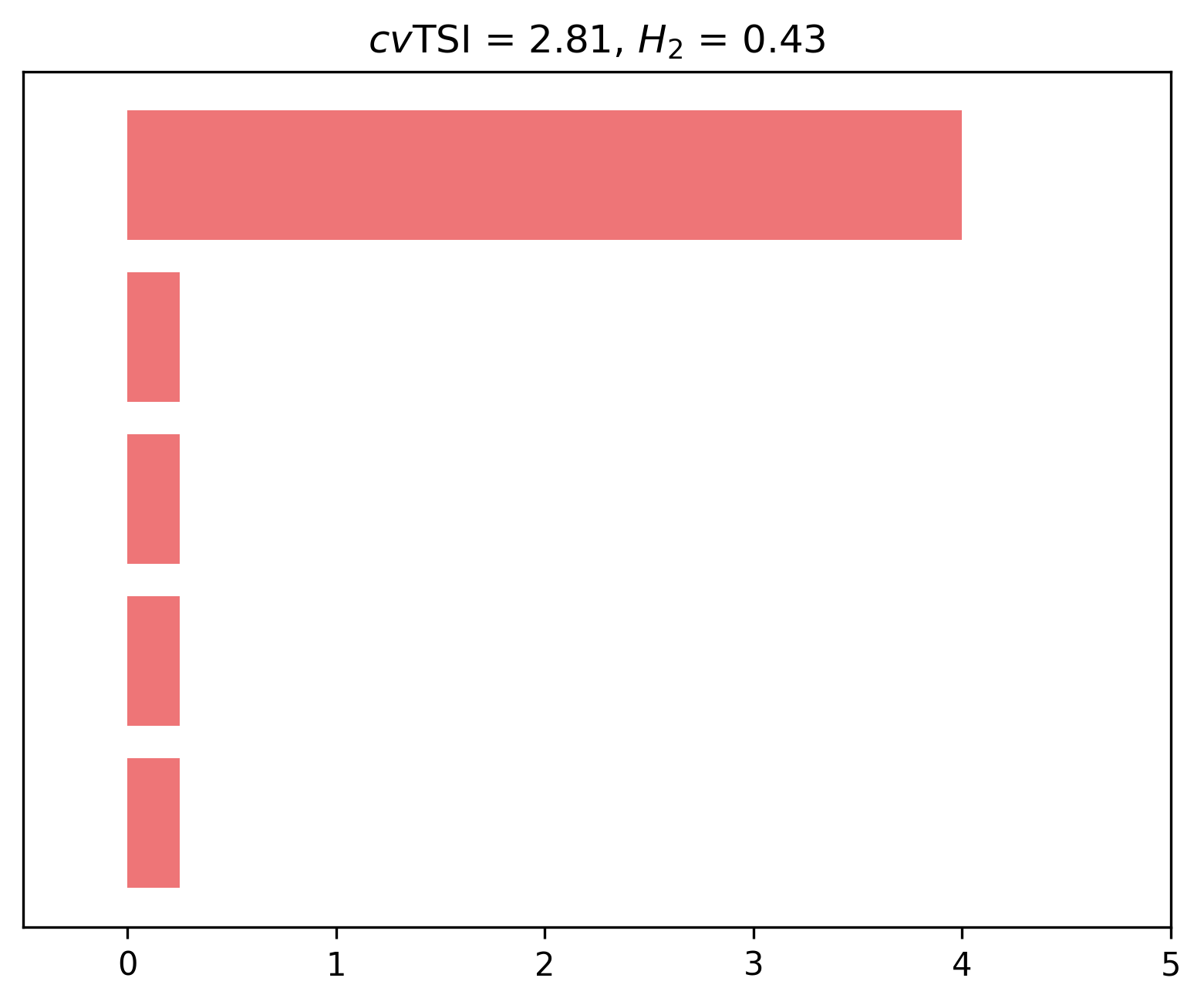}
    \caption{Concentrated lifetime distribution.}
\end{subfigure}
\caption{Extremal behavior of the normalized TSI and Rényi entropy of order two. Uniform lifetime distributions minimize $cv\tsi$ and maximize $H_2$, whereas maximally concentrated lifetime distributions maximize $cv\tsi$ and minimize $H_2$.}
\label{fig:cvtsi_h2_extremal}
\end{figure}

\begin{prop}\label{prop:entropy_expansion}
Let $p=(p_1,\dots,p_{n^B})$ be a probability vector of the form
\[
p_i=\frac{1}{n^B}+\varepsilon_i,
\qquad
\sum_i \varepsilon_i=0,
\]
with $\|\varepsilon\|$ sufficiently small. Then
\[
E(B)=\log(n^B)-\frac{n^B-1}{2n^B}cv\tsi(B)+O(\|\varepsilon\|^3),
\]
with respect to any fixed norm on $\mathbb{R}^n$.
\end{prop}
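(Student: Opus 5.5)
The plan is to Taylor expand the Shannon entropy around the uniform distribution, to second order. Throughout, write $n=n^B$ and $p_i=1/n+\varepsilon_i$. Assume $\|\varepsilon\|$ is small enough that $|n\varepsilon_i|\le 1/2$ for all $i$. All norms on $\mathbb{R}^n$ are equivalent, so this is one condition up to constants, and the choice of norm only affects the implied constant in $O(\|\varepsilon\|^3)$. We may therefore work with $\|\cdot\|_\infty$ and translate at the end.

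\textbf{Step 1: expand each term.} For each $i$,
\[
\log p_i=-\log n+\log(1+n\varepsilon_i)=-\log n+n\varepsilon_i-\tfrac12 n^2\varepsilon_i^2+R_i,
\qquad |R_i|\le C n^3|\varepsilon_i|^3 .
\]
The remainder bound comes from the Lagrange form of the Taylor remainder of $\log(1+x)$ on $|x|\le 1/2$. Multiplying by $p_i=1/n+\varepsilon_i$ and collecting powers of $\varepsilon_i$ gives
\[
p_i\log p_i=-\tfrac{1}{n}\log n+\varepsilon_i(1-\log n)+\tfrac{n}{2}\varepsilon_i^2+O(|\varepsilon_i|^3).
\]
The second-order coefficient is $-\tfrac n2+n=\tfrac n2$. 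The cubic leftovers, namely $\varepsilon_i\cdot(-\tfrac12 n^2\varepsilon_i^2)$ and $\tfrac1n R_i$, are absorbed into the error term, with constants depending on $n$ only. That is acceptable since $n$ is fixed.

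\textbf{Step 2: sum over $i$.} Using $\sum_i\varepsilon_i=0$, the linear terms vanish, so
\[
E(B)=\log n-\tfrac n2\sum_i\varepsilon_i^2+O(\|\varepsilon\|^3).
\]

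\textbf{Step 3: identify $\sum_i\varepsilon_i^2$.} Note that $\sum_i\varepsilon_i^2=\|p-u\|_2^2$. By Proposition~\ref{prop:cvtsi_l2_uniform}, this equals $\frac{n-1}{n^2}cv\tsi(B)$. Therefore
\[
\tfrac n2\sum_i\varepsilon_i^2=\frac{n-1}{2n}cv\tsi(B),
\]
which is exactly the claimed coefficient.

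The main obstacle is bookkeeping of the remainder rather than anything conceptual. Two points need care. First, the cubic error has a constant depending on $n$, which is fine because $n$ is fixed. Second, the condition $p_i>0$ must hold so that $\log p_i$ is defined. Smallness of $\varepsilon$ guarantees this, since $p_i\ge 1/(2n)$ under the assumption $|n\varepsilon_i|\le 1/2$.
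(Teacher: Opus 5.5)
Your proposal is correct and follows essentially the same route as the paper: a second-order Taylor expansion of the entropy around the uniform distribution, cancellation of the linear terms via $\sum_i\varepsilon_i=0$, and identification of $\sum_i\varepsilon_i^2$ through Proposition~\ref{prop:cvtsi_l2_uniform}. The only cosmetic differences are that you expand $\log(1+n\varepsilon_i)$ and then multiply by $p_i$, whereas the paper expands $f(x)=-x\log x$ directly at $x_0=1/n$, and that you handle the remainder and the positivity of $p_i$ more explicitly than the paper does.
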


\begin{proof}
Let $n=n^B$ and write
\[
p_i=\frac{1}{n}+\varepsilon_i,
\qquad
\sum_{i=1}^n \varepsilon_i=0,
\]
with $\|\varepsilon\|$ sufficiently small. We expand the function
\[
f(x):=-x\log x
\]
around the point $x_0=\frac{1}{n}$.

Since
\[
f'(x)=-(\log x+1),
\qquad
f''(x)=-\frac{1}{x},
\qquad
f'''(x)=\frac{1}{x^2},
\]
Taylor's formula gives, for each $i$,
\[
f\!\left(\frac{1}{n}+\varepsilon_i\right)
=
f\!\left(\frac{1}{n}\right)
+
f'\!\left(\frac{1}{n}\right)\varepsilon_i
+
\frac12 f''\!\left(\frac{1}{n}\right)\varepsilon_i^2
+
O(|\varepsilon_i|^3).
\]
Now
\[
f\!\left(\frac{1}{n}\right)=\frac{1}{n}\log n,
\qquad
f'\!\left(\frac{1}{n}\right)=\log n-1,
\qquad
f''\!\left(\frac{1}{n}\right)=-n.
\]
Hence
\[
-p_i\log p_i
=
\frac{1}{n}\log n
+
(\log n-1)\varepsilon_i
-\frac{n}{2}\varepsilon_i^2
+
O(|\varepsilon_i|^3).
\]

Summing over $i=1,\dots,n$, we obtain
\begin{align*}
E(B)
&=
\sum_{i=1}^n \left(-p_i\log p_i\right)\\
&=
\sum_{i=1}^n
\left(
\frac{1}{n}\log n
+
(\log n-1)\varepsilon_i
-\frac{n}{2}\varepsilon_i^2
+
O(|\varepsilon_i|^3)
\right)\\
&=
\log n
+
(\log n-1)\sum_{i=1}^n \varepsilon_i
-\frac{n}{2}\sum_{i=1}^n \varepsilon_i^2
+
O(\|\varepsilon\|^3).
\end{align*}
Since $\sum_i \varepsilon_i=0$, this simplifies to
\[
E(B)=\log n-\frac{n}{2}\sum_{i=1}^n \varepsilon_i^2+O(\|\varepsilon\|^3).
\]

It remains to express the quadratic term in terms of $cv\tsi(B)$. By Proposition~\ref{prop:cvtsi_l2_uniform},
\[
cv\tsi(B)=\frac{n^2}{n-1}\|p-u\|_2^2,
\]
where $u=(1/n,\dots,1/n)$. But
\[
\|p-u\|_2^2=\sum_{i=1}^n \left(p_i-\frac{1}{n}\right)^2
=\sum_{i=1}^n \varepsilon_i^2.
\]
Therefore,
\[
\sum_{i=1}^n \varepsilon_i^2=\frac{n-1}{n^2}cv\tsi(B).
\]
Substituting this into the previous expression yields
\[
E(B)
=
\log n
-\frac{n}{2}\cdot \frac{n-1}{n^2}cv\tsi(B)
+
O(\|\varepsilon\|^3),
\]
that is,
\[
E(B)=\log(n^B)-\frac{n^B-1}{2n^B}cv\tsi(B)+O(\|\varepsilon\|^3).
\]
This completes the proof.
\end{proof}

\begin{remark}\rm
This expansion shows that near the uniform distribution, the normalized TSI captures the leading quadratic deviation of persistent entropy from its maximal value $\log(n^B)$.
\end{remark}

The previous result gives a precise interpretation of the normalized TSI. Since $\sum_i p_i^2$ is the collision probability of the distribution $(p_i)$, the quantity $cv\tsi(B)$ measures concentration of persistence mass. Small values correspond to a more even distribution of lifetimes, while large values indicate that a relatively small number of bars carry most of the total persistence.

Finally, the usual Shannon-Rényi inequality \cite{renyi1961} yields a direct comparison with persistent entropy.

\begin{prop}\label{prop:entropy_bound}
For any barcode $B$ with $n^B\geq 2$ and $L^B>0$,
\begin{equation}
E(B)\geq H_2(B).
\end{equation}
Equivalently,
\begin{equation}
E(B)\geq -\log\left(
\frac{1}{n^B}
+
\frac{n^B-1}{(n^B)^2}cv\tsi(B)
\right).
\end{equation}
\end{prop}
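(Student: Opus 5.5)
The inequality $E(B)\geq H_2(B)$ is an application of Jensen's inequality to the concave function $\log$. The second form then follows at once from Corollary~\ref{cor:renyi_relation}. The plan is to work with the probability vector $p=(p_1,\dots,p_n)$, where $p_i=\ell_i/L^B$ and $n=n^B$. Since $L^B>0$ and $\ell_i\geq 0$, this is a genuine probability vector. Bars with $\ell_i=0$ contribute nothing to either $E(B)$ (using the convention $0\log 0=0$) or to $\sum_i p_i^2$. We may therefore restrict every sum to the index set $I=\{i: p_i>0\}$, on which $\sum_{i\in I}p_i=1$.

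The key step is to view $\sum_i p_i^2=\sum_{i\in I}p_i\cdot p_i$ as the expectation of the random variable $X$ taking the value $p_i$ with probability $p_i$. Concavity of $\log$ on $(0,\infty)$ gives, via Jensen,
\[
\log\Big(\sum_{i\in I}p_i\,p_i\Big)\;\geq\;\sum_{i\in I}p_i\log p_i .
\]
Negating both sides yields $H_2(B)=-\log\big(\sum_i p_i^2\big)\leq -\sum_{i\in I}p_i\log p_i=E(B)$, which is the first claim. Equality holds exactly when $p_i$ is constant on $I$, i.e.\ when the positive-length bars all have the same length. Recording this equality case costs nothing and could be added as a remark.

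For the equivalent formulation, substitute the identity of Proposition~\ref{prop:collision_identity},
\[
\sum_{i=1}^{n}p_i^2=\frac{1}{n}+\frac{n-1}{n^2}\,cv\tsi(B),
\]
into $H_2(B)=-\log\sum_i p_i^2$; this is exactly Corollary~\ref{cor:renyi_relation}. The argument of the logarithm is positive, since it equals $\sum_i p_i^2\geq 1/n>0$ by Theorem~\ref{thm:cvtsi_bounds} or Cauchy--Schwarz, so the expression is well defined.

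None of these steps is a real obstacle. The only point needing care is zero-length bars. The paper allows them (as in Theorem~\ref{thm:tsi_extremal}), and $\log p_i$ is then undefined, so the reduction to the support $I$ must be made before invoking Jensen. After that reduction the argument is a direct one-line application of Jensen's inequality.
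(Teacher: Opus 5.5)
Your proof is correct, but it takes a different route from the paper. The paper does not prove the inequality directly. It cites the standard fact that $\alpha\mapsto H_\alpha$ is non-increasing, notes that Shannon entropy is recovered as $\alpha\to 1$, concludes $E(B)=H_1(B)\geq H_2(B)$, and then substitutes Corollary~\ref{cor:renyi_relation}.

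You instead prove the specific comparison between orders $1$ and $2$ in one line, using Jensen's inequality for $\log$ on the random variable that equals $p_i$ with probability $p_i$. This is the core of the usual monotonicity proof, specialized to this case. Your argument has three advantages:
\begin{itemize}
\item It is self-contained and avoids both the external monotonicity theorem and the limiting argument at $\alpha=1$.
\item Strict concavity gives the equality case for free: equality holds exactly when all positive-length bars have the same length.
\item It handles zero-length bars explicitly. The paper passes over these, even though its definition of $E(B)$ needs the convention $0\log 0=0$.
\end{itemize}
The paper's appeal to monotonicity places the bound within the whole Rényi family and gives $E(B)\geq H_\alpha(B)$ for every $\alpha>1$ at once, but it relies on a cited result rather than a proof.

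The second form is obtained the same way in both, by substituting Proposition~\ref{prop:collision_identity}. Your remark that the argument of the logarithm equals $\sum_i p_i^2\geq 1/n^B>0$ correctly confirms that the expression is well defined.
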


\begin{proof}
It is a standard fact that Rényi entropy is non-increasing in the parameter $\alpha$. Since Shannon entropy is recovered in the limit $\alpha\to 1$, one has
\[
E(B)=H_1(B)\geq H_2(B).
\]
Substituting the formula from Corollary~\ref{cor:renyi_relation} gives the second inequality.
\end{proof}


\subsection{Perturbation Behavior of the Normalized TSI}

Although $cv\tsi$ is scale invariant, it is still sensitive to perturbations that change the number of bars. Nevertheless, its behavior under insertion of a new bar can be described explicitly.

\begin{lemma}\label{lemma:cvtsi_added_bar}
Let $B$ be a barcode with $n^B\geq 2$ bars, and let $\bar{\ell}_B=L^B/n^B$ be the mean bar length with $L^B>0$. Suppose a new bar of length $\ell$ is added, and define
\[
r:=\frac{\ell-\bar{\ell}_B}{\bar{\ell}_B}.
\]
Then
\begin{equation}
cv\tsi(B\cup\{[b,b+\ell)\})
=
\frac{(n^B+1)^2(n^B-1)}{n^B(n^B+1+r)^2}cv\tsi(B)
+
(n^B+1)\left(\frac{r}{n^B+1+r}\right)^2.
\end{equation}
\end{lemma}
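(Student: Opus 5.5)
The plan is to reduce the statement directly to the additive update formula of Lemma~\ref{lemma:1_changed_bar} and then divide by the square of the new mean lifetime. Write $n=n^B$, $\bar\ell=\bar\ell_B=L^B/n$ and $B^+:=B\cup\{[b,b+\ell)\}$. By Lemma~\ref{lemma:1_changed_bar},
\[
\tsi(B^+)=\frac{n-1}{n}\tsi(B)+\frac{1}{n+1}(\ell-\bar\ell)^2 .
\]
The definition of $r$ gives $\ell-\bar\ell=r\bar\ell$, so the second term equals $\frac{r^2\bar\ell^2}{n+1}$. For the first term I will use the definition of the normalized index in the form $\tsi(B)=\bar\ell^2\,cv\tsi(B)$.

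Next I will express the new mean $\bar\ell^+=(L^B+\ell)/(n+1)$ in terms of $\bar\ell$ and $r$. Since $L^B=n\bar\ell$ and $\ell=(1+r)\bar\ell$, we get
\[
\bar\ell^+=\bar\ell\,\frac{n+1+r}{n+1}.
\]
Dividing the expression for $\tsi(B^+)$ by $(\bar\ell^+)^2=\bar\ell^2(n+1+r)^2/(n+1)^2$ cancels the factor $\bar\ell^2$ from both terms. The first term becomes $\frac{(n+1)^2(n-1)}{n(n+1+r)^2}cv\tsi(B)$. The second term becomes $\frac{r^2}{n+1}\cdot\frac{(n+1)^2}{(n+1+r)^2}=(n+1)\bigl(\frac{r}{n+1+r}\bigr)^2$. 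Together these are exactly the claimed formula.

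The only point needing care is that the division is legitimate, i.e.\ that $\bar\ell^+>0$, equivalently $n+1+r\neq 0$. This holds because $n+1+r=n+\ell/\bar\ell\geq n\geq 2$, using $\ell\geq 0$ and $\bar\ell>0$ (the latter from $L^B>0$). In particular $L^{B^+}>0$, so $cv\tsi(B^+)$ is well defined. Since $B^+$ has $n+1\geq 3$ bars, the unbiased-variance formula applies with no degenerate case. I expect no genuine obstacle beyond this bookkeeping; the main risk is an algebra slip when rewriting $\bar\ell^+$, so I would check that step carefully against the special case $r=0$, where $\bar\ell^+=\bar\ell$ and the formula reduces to $\frac{n^2-1}{n(n+1)}cv\tsi(B)\cdot\frac{n+1}{n+1}$, which is consistent with Lemma~\ref{lemma:1_changed_bar}.
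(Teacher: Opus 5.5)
Your proposal is correct and follows essentially the same route as the paper. Like the paper, you apply the insertion formula of Lemma~\ref{lemma:1_changed_bar}, substitute $\ell-\bar\ell=r\bar\ell$ and $\tsi(B)=\bar\ell^2\,cv\tsi(B)$, rewrite the new mean as $\bar\ell(n+1+r)/(n+1)$, and divide; your extra check that $n+1+r=n+\ell/\bar\ell\geq n>0$, which justifies the division, is a point the paper leaves implicit.
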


\begin{proof}
Let $B^+:=B\cup\{[b,b+\ell)\}$ and write $n=n^B$ and $\bar\ell=\bar{\ell}_B$. By definition,
\[
\tsi(B)=cv\tsi(B)\,\bar\ell^2.
\]
From Lemma~\ref{lemma:1_changed_bar}, we know that
\[
\tsi(B^+)
=
\frac{n-1}{n}\tsi(B)
+
\frac{1}{n+1}(\ell-\bar\ell)^2.
\]
Moreover, the new mean length is
\[
\bar\ell^+
=
\frac{L^B+\ell}{n+1}
=
\bar\ell+\frac{\ell-\bar\ell}{n+1}.
\]
Since $\ell-\bar\ell=r\bar\ell$, this becomes
\[
\bar\ell^+
=
\bar\ell\left(1+\frac{r}{n+1}\right).
\]
Therefore,
\begin{align*}
cv\tsi(B^+)
&=
\frac{\tsi(B^+)}{(\bar\ell^+)^2}\\
&=
\frac{\frac{n-1}{n}cv\tsi(B)\bar\ell^2+\frac{1}{n+1}r^2\bar\ell^2}
{\bar\ell^2\left(1+\frac{r}{n+1}\right)^2}\\
&=
\frac{\frac{n-1}{n}cv\tsi(B)+\frac{1}{n+1}r^2}
{\left(1+\frac{r}{n+1}\right)^2}.
\end{align*}
Multiplying numerator and denominator by $(n+1)^2$ gives
\[
cv\tsi(B^+)
=
\frac{(n+1)^2(n-1)}{n(n+1+r)^2}cv\tsi(B)
+
(n+1)\left(\frac{r}{n+1+r}\right)^2,
\]
as claimed.
\end{proof}

As a consequence, if $\ell\to 0$, then $r\to -1$, and hence
\begin{equation}
cv\tsi(B\cup\{[b,b+\ell)\})
\longrightarrow
\frac{(n^B+1)^2(n^B-1)}{(n^B)^3}cv\tsi(B)
+
\frac{n^B+1}{(n^B)^2}.
\end{equation}
Thus $cv\tsi$ is not continuous under insertion of arbitrarily short bars either. However, unlike the unnormalized TSI, the limiting discrepancy depends only on the number of bars and not on the total scale of the barcode.

Under fixed cardinality, one again obtains a quantitative stability estimate.

\begin{lemma}\label{lemma:cvtsi_stability}
Let $B_1$ and $B_2$ be barcodes with the same number of bars $n\geq 2$ and with $L^{B_1},L^{B_2}>0$. Then
\begin{equation}
|cv\tsi(B_1)-cv\tsi(B_2)|
\leq
\frac{8n^2}{(n-1)\min\{\bar{\ell}_{B_1},\bar{\ell}_{B_2}\}}\,d_B(B_1,B_2),
\end{equation}
where $\bar{\ell}_{B_i}=L^{B_i}/n$ denotes the mean bar length of $B_i$.
\end{lemma}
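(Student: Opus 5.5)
The plan is to reduce everything to the normalized weights and use the collision-probability identity of Proposition~\ref{prop:collision_identity}. Write $p_i=\ell_i/L^{B_1}$ and $q_i=\ell_i'/L^{B_2}$. The identity gives
\[
cv\tsi(B_j)=\frac{n^2}{n-1}\Bigl(\sum_i (\text{weights})^2-\tfrac1n\Bigr),
\]
and the $-1/n$ terms cancel in the difference. Hence
\[
|cv\tsi(B_1)-cv\tsi(B_2)|=\frac{n^2}{n-1}\Bigl|\sum_i p_i^2-\sum_i q_i^2\Bigr|\le \frac{n^2}{n-1}\sum_i |p_i-q_i|\,(p_i+q_i).
\]
It therefore suffices to show $\sum_i|p_i-q_i|(p_i+q_i)\le 8\delta/\min\{\bar\ell_{B_1},\bar\ell_{B_2}\}$, where $\delta=d_B(B_1,B_2)$.

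\textbf{Matching.} As in the proof of the previous lemma (the fixed-cardinality bottleneck bound for $\tsi$), take an optimal matching and pair the bars as $\ell_i\leftrightarrow\ell_i'$ with $|\ell_i-\ell_i'|\le 2\delta$. Summing gives $|L^{B_1}-L^{B_2}|\le 2n\delta$. By symmetry of the statement I may relabel so that $L^{B_1}\le L^{B_2}$, i.e. $\bar\ell_{B_1}$ is the minimum.

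\textbf{Weight estimate.} Decompose
\[
p_i-q_i=\frac{\ell_i-\ell_i'}{L^{B_1}}+\ell_i'\Bigl(\frac1{L^{B_1}}-\frac1{L^{B_2}}\Bigr)=\frac{\ell_i-\ell_i'}{L^{B_1}}+q_i\,\frac{L^{B_2}-L^{B_1}}{L^{B_1}}.
\]
This yields $|p_i-q_i|\le \frac{2\delta}{L^{B_1}}(1+nq_i)$. Then
\[
\sum_i|p_i-q_i|(p_i+q_i)\le \frac{2\delta}{L^{B_1}}\Bigl(\sum_i(p_i+q_i)+n\sum_i(p_iq_i+q_i^2)\Bigr)\le\frac{2\delta}{L^{B_1}}(2+2n),
\]
using $\sum p_i=\sum q_i=1$ and $\sum_i p_iq_i,\ \sum_i q_i^2\le 1$.

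\textbf{Conclusion.} Since $L^{B_1}=n\bar\ell_{B_1}$, the difference is at most
\[
\frac{n^2}{n-1}\cdot\frac{4(n+1)\delta}{n\,\bar\ell_{B_1}}=\frac{4n(n+1)}{(n-1)\bar\ell_{B_1}}\,\delta\le\frac{8n^2}{(n-1)\min\{\bar\ell_{B_1},\bar\ell_{B_2}\}}\,\delta,
\]
because $n+1\le 2n$.

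The main delicate point is the matching step. The bottleneck matching could in principle send bars to the diagonal rather than to each other. I would adopt the same convention as in the previous lemma: with equal cardinalities, the matching is taken as a bijection between the bars, so every bar has a partner with $|\ell_i-\ell_i'|\le 2\delta$. Beyond that, the only care needed is handling the normalization, which is resolved by the decomposition above that places the $1/L$ factor at the smaller total persistence.
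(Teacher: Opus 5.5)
Your proof is correct and follows the paper's outline: both use the collision identity to reduce to bounding $\frac{n^2}{n-1}\sum_i|p_i-q_i|(p_i+q_i)$. The difference lies in the key weight estimate.

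The paper imports from Atienza et al.\ the uniform bound $|p_i-q_i|\le 4n\,d_B(B_1,B_2)/\min\{L^{B_1},L^{B_2}\}$. It then uses $\sum_i(p_i+q_i)=2$, which gives exactly the constant $8n^2$. You instead derive the weight-dependent bound $|p_i-q_i|\le\frac{2\delta}{L^{B_1}}(1+nq_i)$ directly from your decomposition. This makes the argument self-contained; it even implies the cited estimate, since $1+nq_i\le n+1\le 2n$. Keeping $q_i$ inside the sum also gives the sharper constant $4n(n+1)$.

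Two further remarks. First, the matching issue you flag, which the paper passes over in the same way, is harmless. With equal cardinalities, an optimal bottleneck matching sends equally many points of each diagram to the diagonal, and each such point has lifetime at most $2\delta$. Pairing these leftovers arbitrarily therefore gives a bijection of bars with $|\ell_i-\ell_i'|\le 2\delta$ for all $i$, which is all you use.

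Second, your decomposition holds for either labelling, so the reduction to $L^{B_1}\le L^{B_2}$ is unnecessary. Taking $L^{B_1}$ to be the larger total persistence even puts $\max\{\bar\ell_{B_1},\bar\ell_{B_2}\}$ in the denominator.
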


\begin{proof}
By Proposition~\ref{prop:cvtsi_explicit},
\begin{align*}
|cv\tsi(B_1)-cv\tsi(B_2)|
&=
\frac{n^2}{n-1}
\left|
\sum_i\left(\frac{\ell_i}{L^{B_1}}\right)^2
-
\sum_i\left(\frac{\ell_i'}{L^{B_2}}\right)^2
\right|\\
&\leq
\frac{n^2}{n-1}
\sum_i
\left|
\left(\frac{\ell_i}{L^{B_1}}\right)^2
-
\left(\frac{\ell_i'}{L^{B_2}}\right)^2
\right|\\
&=
\frac{n^2}{n-1}
\sum_i
\left|
\frac{\ell_i}{L^{B_1}}-\frac{\ell_i'}{L^{B_2}}
\right|
\left(
\frac{\ell_i}{L^{B_1}}+\frac{\ell_i'}{L^{B_2}}
\right).
\end{align*}
Using the normalized stability estimate from \cite{atienza_persistent_2019}, one has
\[
\left|
\frac{\ell_i}{L^{B_1}}-\frac{\ell_i'}{L^{B_2}}
\right|
\leq
\frac{4n}{\min\{L^{B_1},L^{B_2}\}}\,d_B(B_1,B_2).
\]
Therefore,
\begin{align*}
|cv\tsi(B_1)-cv\tsi(B_2)|
&\leq
\frac{4n^3}{(n-1)\min\{L^{B_1},L^{B_2}\}}\,d_B(B_1,B_2)
\sum_i
\left(
\frac{\ell_i}{L^{B_1}}+\frac{\ell_i'}{L^{B_2}}
\right)\\
&=
\frac{4n^3}{(n-1)\min\{L^{B_1},L^{B_2}\}}\,d_B(B_1,B_2)\cdot 2\\
&=
\frac{8n^3}{(n-1)\min\{L^{B_1},L^{B_2}\}}\,d_B(B_1,B_2).
\end{align*}
Since $\min\{L^{B_1},L^{B_2}\}=n\min\{\bar{\ell}_{B_1},\bar{\ell}_{B_2}\}$, this becomes
\[
|cv\tsi(B_1)-cv\tsi(B_2)|
\leq
\frac{8n^2}{(n-1)\min\{\bar{\ell}_{B_1},\bar{\ell}_{B_2}\}}\,d_B(B_1,B_2),
\]
as claimed.
\end{proof}

\begin{remark}\rm 
The role of $cv\tsi$ is conceptually different from that of the original TSI. The TSI captures absolute heterogeneity in persistence lifetimes, while $cv\tsi$ removes scale and measures concentration of persistence mass. Through its exact relation to $H_2$, the normalized TSI should therefore be viewed as an information-theoretic counterpart of the variance-based summary introduced in Section~\ref{Sec:The Topological Stability Index}.
\end{remark}



\subsection{Relation to persistence curves}

Persistence curves provide a general framework for transforming persistence diagrams into functional summaries \cite{chung_persistence_2022}. In that framework, one associates to a persistence diagram a real-valued function of the filtration parameter, typically by aggregating contributions from the features that are alive at each scale.

Although the TSI is not itself a persistence curve in this sense, it may be viewed as a scalar, time-collapsed summary built from the global distribution of persistence lifetimes. More precisely, the TSI aggregates squared deviations of bar lengths from their mean, and therefore plays a role analogous to a global second-moment functional on the barcode.

This viewpoint clarifies the contrast between scalar and functional summaries in topological data analysis. Persistence curves retain scale-by-scale information and can in many cases exhibit stronger stability under local perturbations, whereas the TSI compresses the entire barcode into a single number that measures overall lifetime dispersion. This greater compression makes the TSI simpler and more interpretable, but also more sensitive to certain global modifications, such as the insertion of new bars, as shown in Section~\ref{Sec:The Topological Stability Index}.

\begin{remark}\rm
The persistence-curve framework suggests a natural functional analogue of the TSI: one could define a time-dependent summary by aggregating squared deviations of the lifetimes of features that are alive at each filtration value. Such a construction would retain temporal information while capturing dispersion, and may provide a more stable alternative to the scalar TSI. We leave this direction for future work.
\end{remark}


\section{Numerical Experiments and Applications}\label{Sec:Numerical Experiments and Applications}
In this section, we investigate the empirical behavior of the Topological Stability Index (TSI) and compare it with persistent entropy and its normalized counterpart $cv\tsi$. The experiments are designed to validate the theoretical properties established in \S~\ref{Sec:The Topological Stability Index} and \S~\ref{Sec:Relation to Persistence Entropy} and to highlight the practical differences between dispersion-based and entropy-based summaries.


\subsection{Geometric configurations: circle models}

We begin with controlled geometric examples where the persistence structure is well understood. For this we use the Rips complex as it is easiest to interpret and we look only at dimension 1 features.

\medskip

\paragraph{{\bf Disjoint circles:}}
We consider two disjoint circles of radii $1$ and $\tfrac{1}{4}$, sampled with $n$ equidistant points as illustrated in Figure~\ref{fig:disjoint_circles_total}. In the ideal continuous setting, the persistence diagram contains two dominant $1$-dimensional features whose lifetimes are proportional to the radii, yielding a fixed ratio of persistence lengths.

The death of each loop is determined by the formation of the first triangle that contains the center of the circle. In the continuous limit, this corresponds to the inscribed equilateral triangle. Its side length, and consequently the death time of the $1$-dimensional features, can be computed from the radius $r$ as $s=2r\cos(\pi/6)$.

In the discrete setting with finitely many points, the birth and death times of these features are perturbed. The birth time is determined by the distance between adjacent points, which is inversely proportional to the number of points. The death time depends on the closest available approximation of the theoretical inscribed equilateral triangle. Consequently, we observe features with persistence intervals of the form
\[
    \left[2r_i\sin(\frac{\pi}{n_i}),\,  2r_i\cos(\pi/6)+\varepsilon_{n_i}\right),
\]
where $r_i$ is the radius of circle $i$, $n_i$ is the number of points sampled from circle $i$, and $\varepsilon_{n_i}$ is a decreasing error term arising from the discrete approximation of the equilateral triangle. Note that $\varepsilon_{3n_i}=0$, because in such cases, the points can perfectly form an inscribed equilateral triangle. Moreover, by sampling four times as many points from the larger circle, by ignoring the error, and by using the first order approximation of the sine function, one obtains the following barcode: 
\[\left\{\left[\frac{5\pi}{2n}, 2\cos(\pi/6)\right), \left[\frac{5\pi}{2n}, \frac{1}{2}\cos(\pi/6)\right)\right\}.
\]

The birth times of the bars in this barcode are therefore determined by $n$; however, the death times remain largely unaffected. Consequently, comparing varying values of $n$ is analogous to applying a uniform translation to all bars. Hence, by Proposition~\ref{prop:length_translate_tsi}, the TSI remains invariant. Figure~\ref{fig:disjoint_circles_total} illustrates that the TSI converges almost immediately to its asymptotic value; only the decreasing approximation error is visible. In contrast, persistent entropy and $cv\tsi$ exhibit slower convergence. Both metrics are highly dependent on the birth times converging to $0$, which is reflected in their respective increasing and decreasing convergence behaviors.

\begin{figure}[H]
    \centering
    \begin{subfigure}[t]{0.3\textwidth}
        \centering
        \includegraphics[height=1.5in]{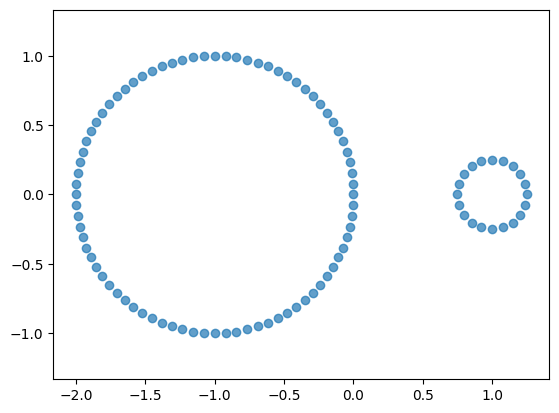}
        \caption{Point cloud.}
    \end{subfigure}
    ~
    \begin{subfigure}[t]{0.3\textwidth}
        \centering
        \includegraphics[height=1.5in]{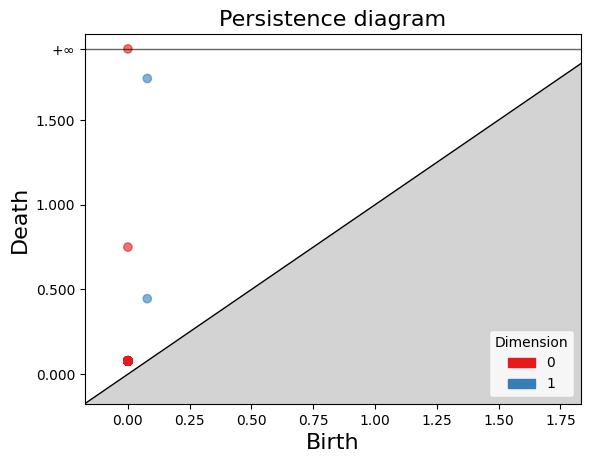}
        \caption{Persistence diagram.}
    \end{subfigure}
    ~
    \begin{subfigure}[t]{0.35\textwidth}
        \centering
        \includegraphics[height=1.5in]{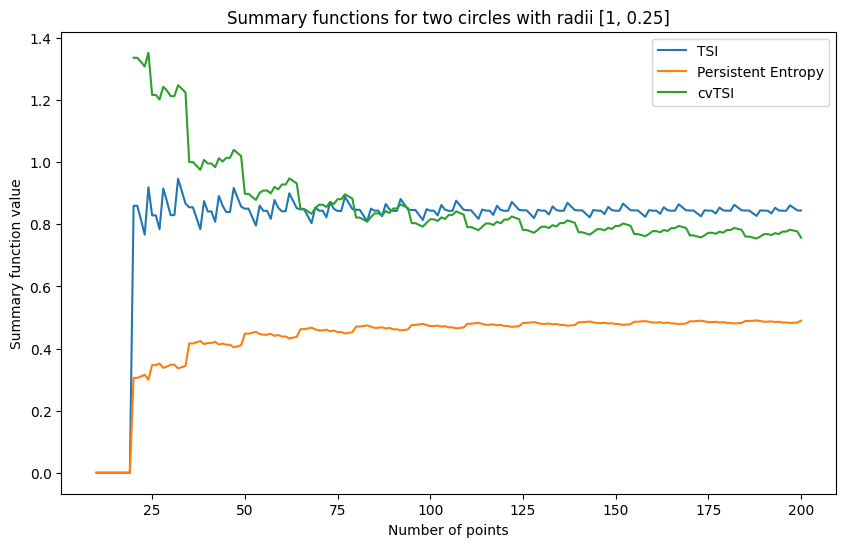}
        \caption{Statistics vs.\ sample size.}
    \end{subfigure}
    \caption{Two disjoint circles with radii $1$ and $\tfrac{1}{4}$. 
(A) Point cloud sampled with increasing density. 
(B) Persistence diagram showing two dominant $H_1$ features corresponding to the two loops. 
(C) Evolution of TSI, persistent entropy, and $cv\tsi$ as a function of sample size. 
}
    \label{fig:disjoint_circles_total}
\end{figure}


\bigbreak 

\paragraph{{\bf Intertwined circles:}}
We next consider two intersecting circles, as illustrated in Figure~\ref{fig:intertwined_circles_total}. In this configuration, the persistence diagram exhibits three prominent $1$-dimensional features corresponding to the two individual loops and the additional cycle created by their intersection. Unlike the disjoint case, the persistence lifetimes are no longer proportional, resulting in a more heterogeneous distribution of bar lengths. Furthermore, in this setting, the number of features changes depending on the amount of points due to small cycles appearing at the intersection points of the circles.

We observe that the TSI (again) stabilizes rapidly with increasing sample size, confirming its robustness to sampling density. However, small fluctuations remain due to the presence of short-lived bars, illustrating the sensitivity of TSI to local perturbations (recall Lemma~\ref{lemma:1_changed_bar}). Persistent entropy, on the other hand, reflects the overall balance of the normalized distribution and is less sensitive to the relative separation between dominant features. This highlights a key distinction: while entropy measures distributional uniformity, the TSI emphasizes dispersion and relative scale differences among persistence lifetimes. Finally, $cv\tsi$ exhibits pronounced oscillations due to its sensitivity to short-lived features, illustrating the impact of local perturbations on scale-invariant summaries.

\begin{figure}[H]
    \centering
    \begin{subfigure}[t]{0.3\textwidth}
        \centering
        \includegraphics[height=1.5in]{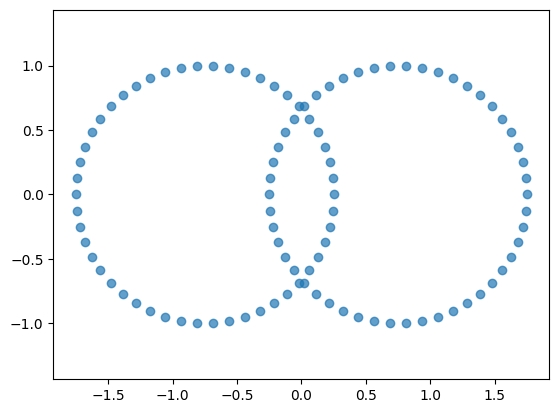}
        \caption{Point cloud.}
    \end{subfigure}
    ~
    \begin{subfigure}[t]{0.3\textwidth}
        \centering
        \includegraphics[height=1.5in]{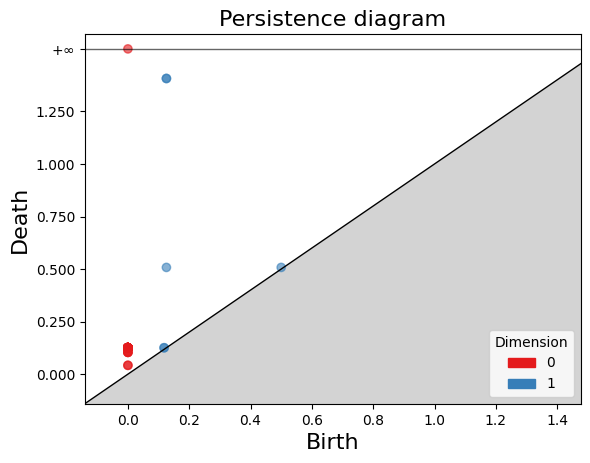}
        \caption{Persistence diagram.}
    \end{subfigure}
    ~
    \begin{subfigure}[t]{0.35\textwidth}
        \centering
        \includegraphics[height=1.5in]{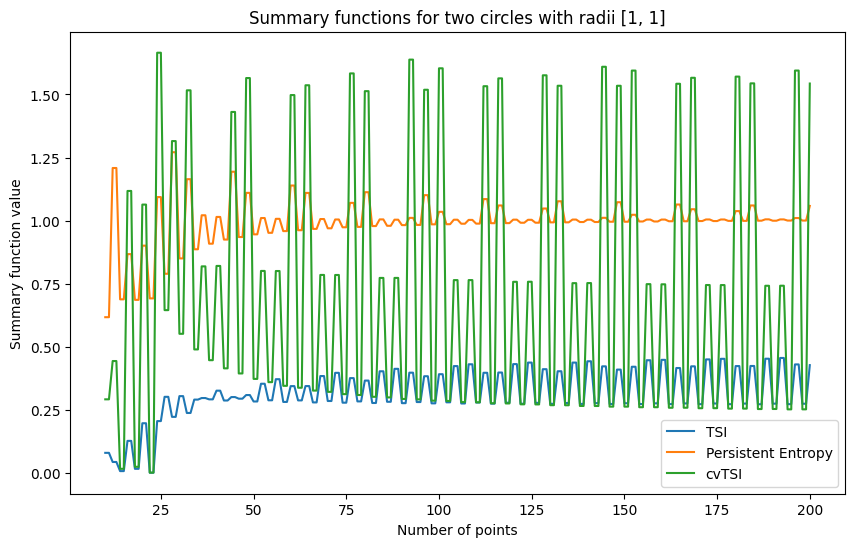}
        \caption{Statistics vs.\ sample size.}
    \end{subfigure}
    \caption{
Two intertwined circles of equal radius. 
(A) Point cloud. 
(B) Persistence diagram showing multiple $H_1$ features arising from the overlapping structure. 
(C) Evolution of TSI, persistent entropy, and $cv\tsi$ as a function of sample size.
}
    \label{fig:intertwined_circles_total}
\end{figure}

\bigbreak 

\paragraph{\bf Random sampling effects}

We now introduce randomness by sampling points independently and uniformly along two circles, as illustrated in Figure~\ref{fig:sampled_circles_total}. The resulting persistence diagrams vary across realizations, and we therefore estimate the statistics using Monte Carlo simulations. At low sampling densities, the underlying topological structure is only partially resolved, leading to incomplete and unstable persistence diagrams. As the number of sampled points increases, the dominant features become more clearly defined, resulting in a gradual increase and eventual stabilization of both the TSI and the persistent entropy.

A key distinction emerges in the variability of the estimators under stochastic sampling. While the TSI exhibits an initial growth phase reflecting the progressive resolution of topological features, it stabilizes relatively quickly across realizations once the dominant structure is captured. Persistent entropy exhibits smoother convergence due to its normalization. In contrast, $cv\tsi$ shows significantly higher variability, as small fluctuations in short-lived bars are amplified by normalization with respect to the mean lifetime.

\begin{figure}[H]
    \centering
    \begin{subfigure}[t]{0.3\textwidth}
        \centering
        \includegraphics[height=1.5in]{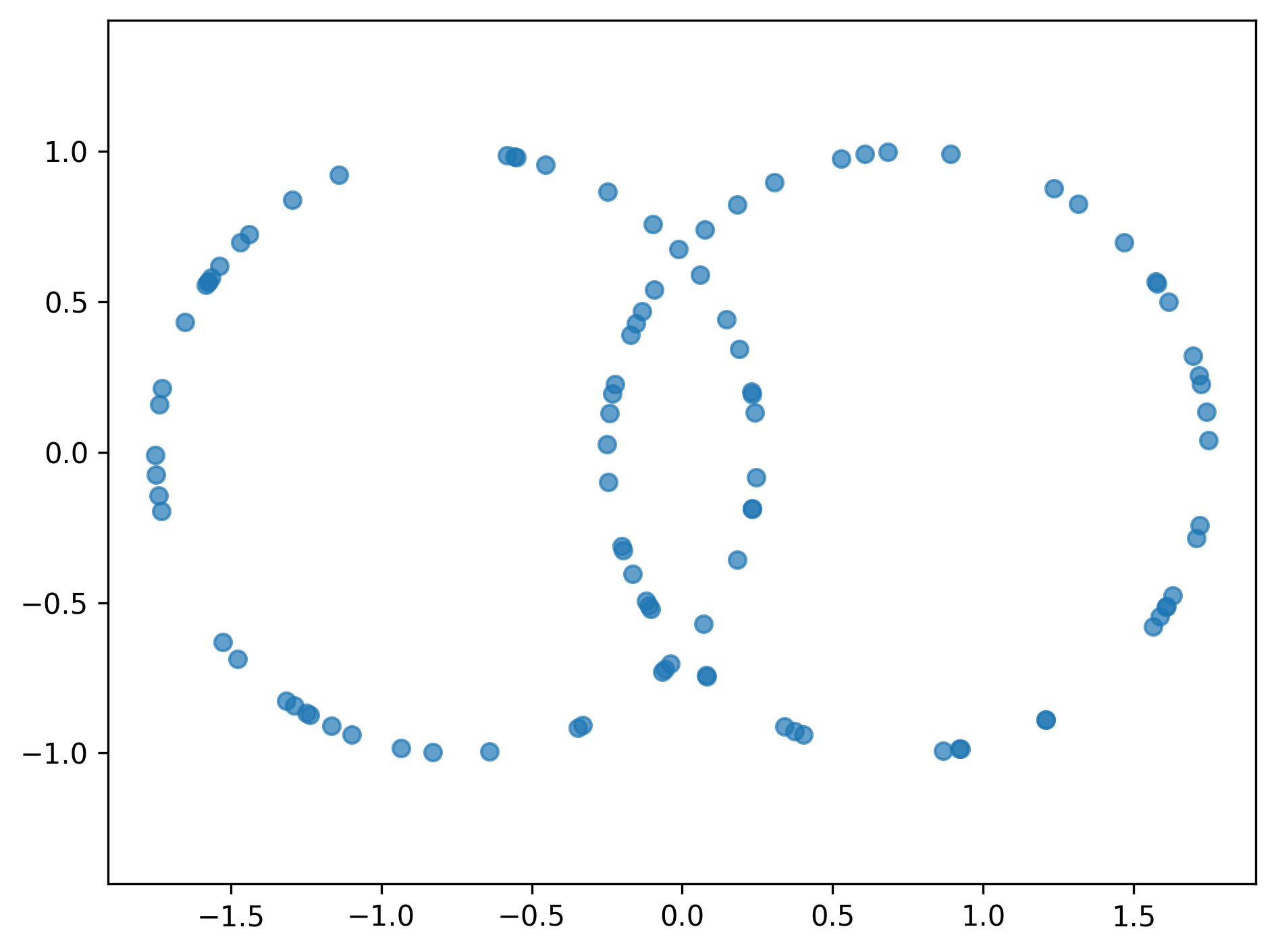}
        \caption{Point cloud. \label{subfig:sampled_circles}}
    \end{subfigure}
    ~
    \begin{subfigure}[t]{0.3\textwidth}
        \centering
        \includegraphics[height=1.5in]{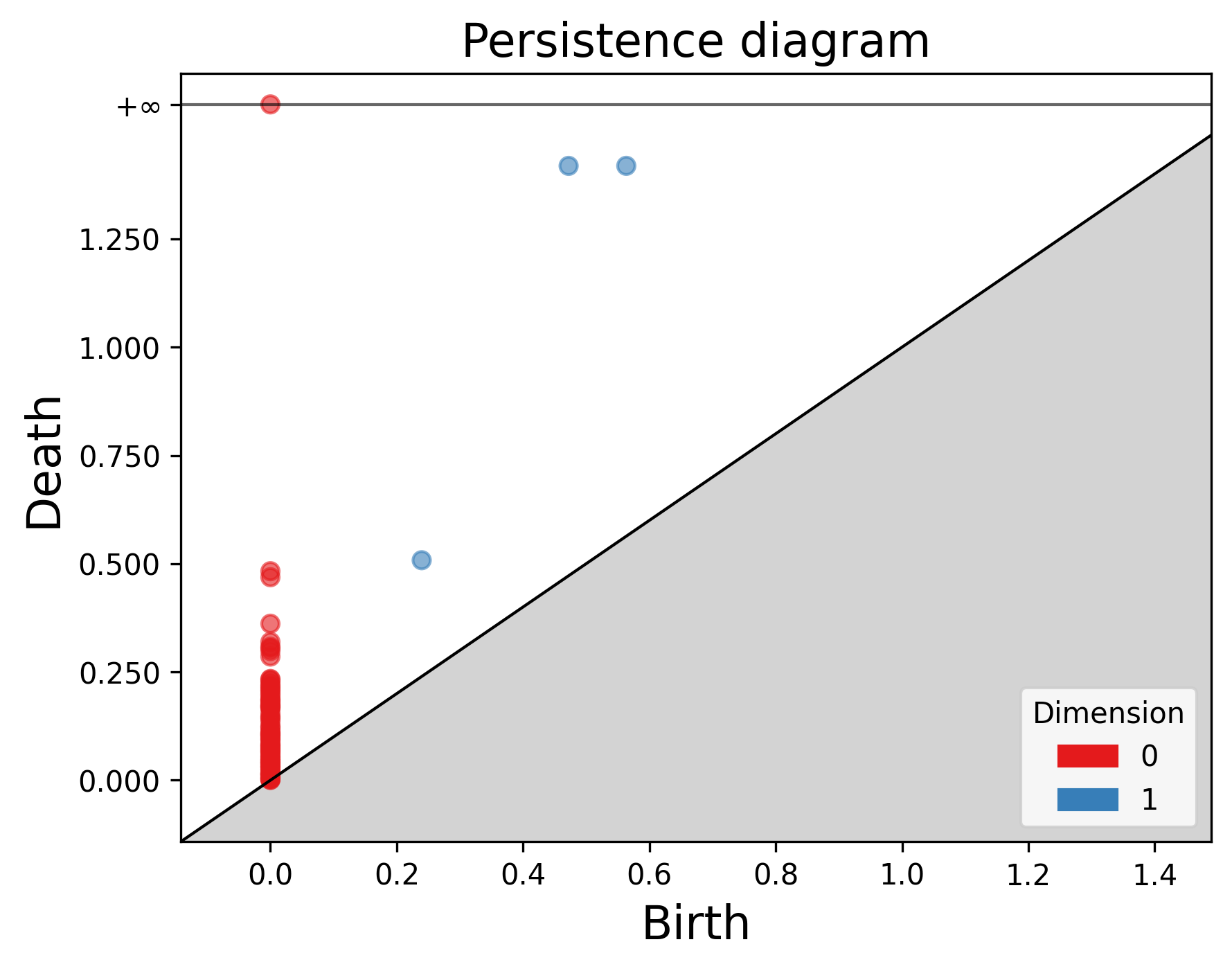}
        \caption{Persistence diagram.}
    \end{subfigure}
    ~
    \begin{subfigure}[t]{0.35\textwidth}
        \centering
        \includegraphics[height=1.5in]{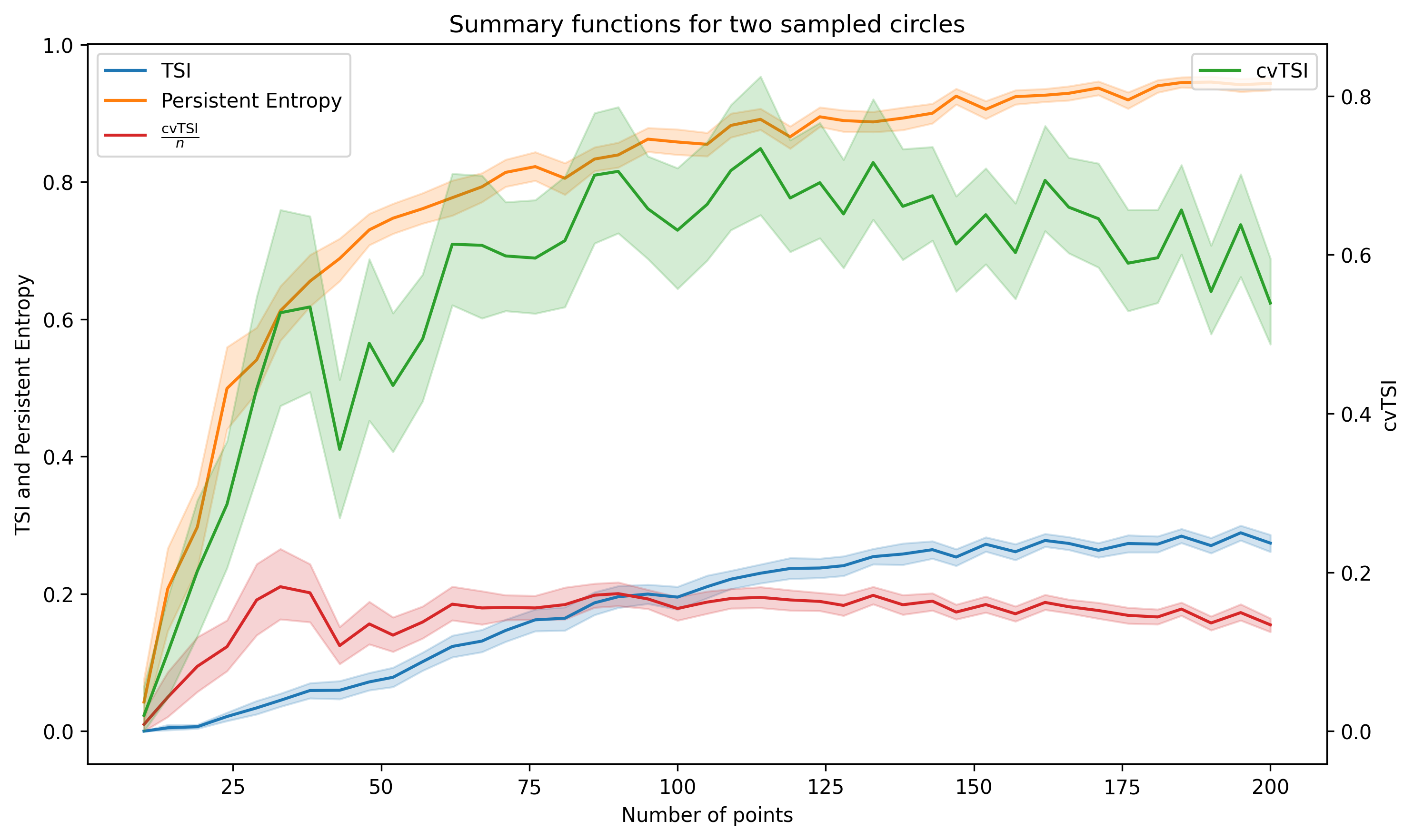}
        \caption{Monte Carlo estimates.}
    \end{subfigure}
    \caption{
Random sampling on intertwined circles. 
(A) A random realization of the sampled point cloud. 
(B) Corresponding persistence diagram, illustrating variability due to stochastic sampling. 
(C) Monte Carlo estimates (mean and variability) of TSI, persistent entropy, $cv\tsi$ and $\frac{cv\tsi}{n}$ as functions of sample size based on 100 simulations per point.
}
    \label{fig:sampled_circles_total}
\end{figure}

To conclude, this experiment highlights a fundamental distinction between the summaries. As noted before, persistent entropy provides a stable, smooth characterization of the normalized lifetime distribution, while the TSI captures the progressive resolution of geometric structure as sampling density increases. In contrast, $cv\tsi$ amplifies fluctuations arising from short-lived features, making it highly sensitive to local perturbations. These observations suggest that, under stochastic sampling, the TSI offers a robust descriptor of structural heterogeneity, whereas entropy emphasizes distributional balance and $cv\tsi$ serves as a fine-scale fluctuation detector.

\subsubsection{Alpha complex} 
All preceding experiments were conducted using the Vietoris-Rips complex. However, the Alpha complex serves as a common alternative due to its superior computational efficiency for filtrations on large point clouds. There are two primary distinctions between these complexes in this context. First, the Alpha complex is restricted to simplices up to dimension $1$, whereas the Rips complex permits simplices of arbitrarily high dimension. Second, the Alpha complex can contain $1$-dimensional cycles composed of three simplices, a configuration that is impossible in the Rips complex (where any triplet of pairwise connected vertices immediately forms a $2$-simplex). This latter distinction is particularly significant when analyzing $1$-dimensional features, as the Alpha complex introduces a multitude of short-lived topological features, or "noise", that can influence the resulting statistics.

Figure~\ref{fig:sampled_circles_alpha} illustrates the previously described experiment on random sampling effects, now conducted utilizing the Alpha complex. While the resulting persistence diagram retains the prominent features observed in the Rips diagram, it now also incorporates these additional short-lived features. For large sample sizes, the persistent entropy and TSI remain largely unperturbed by this noise; discrepancies only become apparent for small values of $n$. Conversely, the $cv\tsi$ operates on a vastly different scale, as it is highly sensitive to the total number of features, in accordance with Theorem~\ref{thm:cvtsi_bounds}. To meaningfully compare the two filtrations using this statistic, we examine the normalized quantity $\frac{cv\tsi}{n}$. This metric exhibits a substantial difference, yielding approximately double the value observed in the Rips filtration, thereby demonstrating its capacity to capture the structural changes induced by the choice of complex (compare with Figure~\ref{fig:sampled_circles_total}).

To conclude, we note that the use of the Alpha filtration introduces topological noise that can distort quantitative precision. Should computational constraints require the Alpha complex, the analysis appears to remain qualitatively sound; our results indicate that global persistent patterns are consistent across both methods.

\begin{figure}[H]
    \begin{subfigure}[t]{0.33\textwidth}
        \centering
        \includegraphics[height=2in]{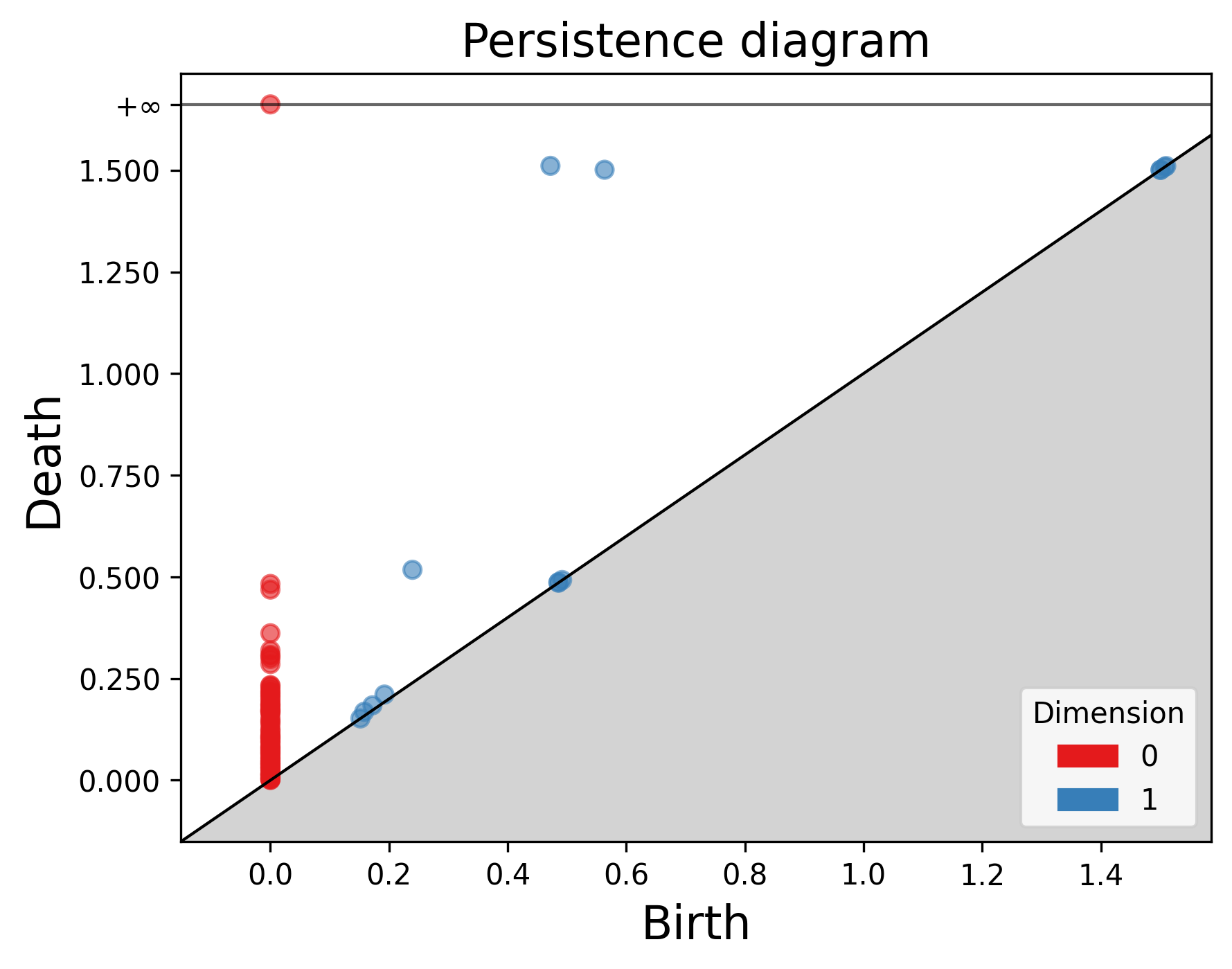}
        \caption{Persistence diagram.}
    \end{subfigure}
    ~
    \begin{subfigure}[t]{0.66\textwidth}
        \centering
        \includegraphics[height=2in]{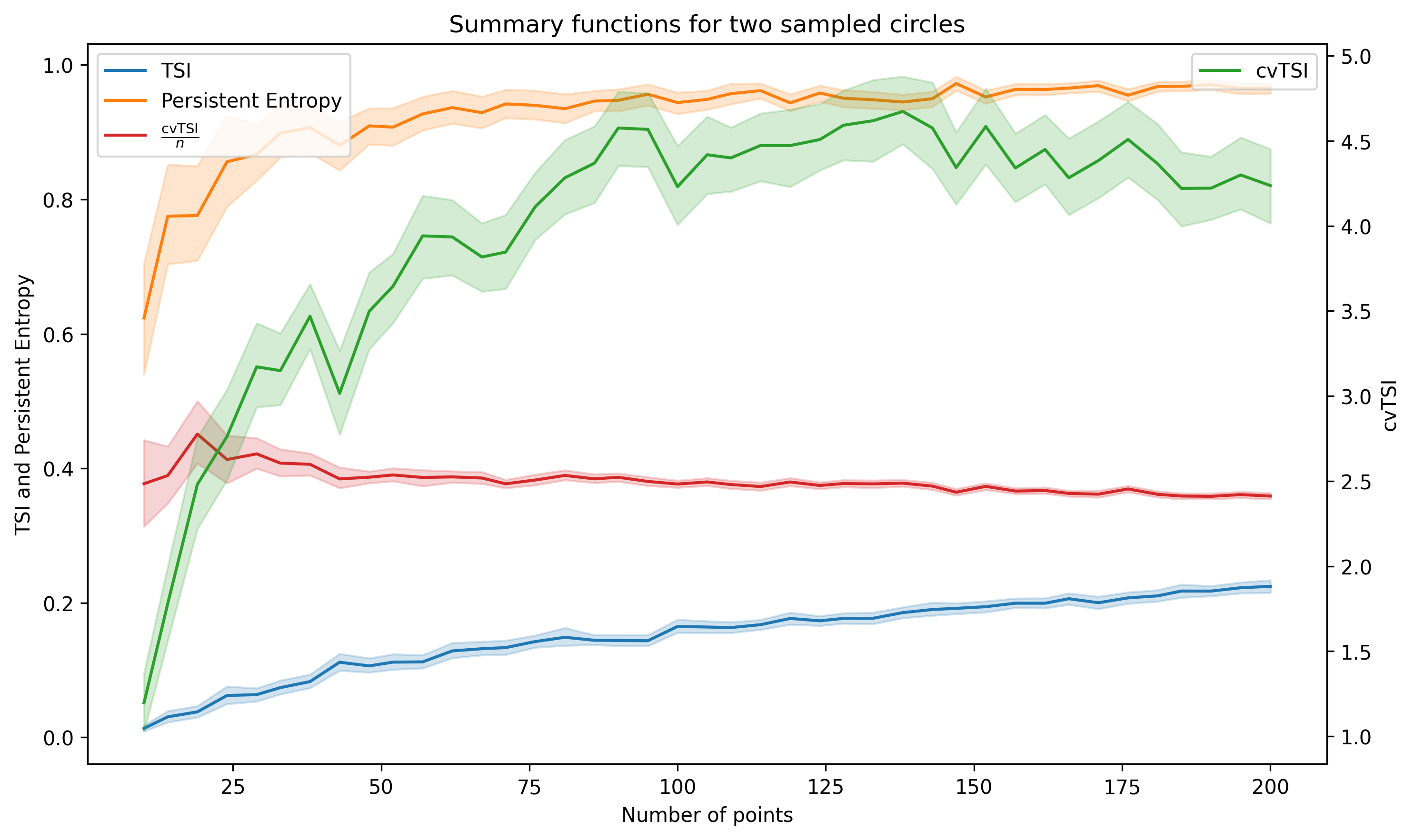}
        \caption{Monte Carlo estimates.}
    \end{subfigure}
    \caption{
Random sampling on intertwined circles using the Alpha complex. 
(A) Persistence diagram corresponding to the point cloud of Figure~\ref{fig:sampled_circles_total}(A), illustrating additional small features due to the Alpha complex. 
(B) Monte Carlo estimates (mean and variability) of TSI, persistent entropy, $cv\tsi$ and $\frac{cv\tsi}{n}$ as functions of sample size based on 100 simulations per point.
}
    \label{fig:sampled_circles_alpha}
\end{figure}


\subsection{Robustness under noise}

We now analyze the effect of noise on the persistence-based summaries (see Figure~\ref{fig:noisy_circles_total}). As noise increases, the persistence diagram becomes progressively dominated by short-lived features, reflecting the degradation of the underlying geometric structure.

We consider two distinct models of noise. The first is Gaussian noise, in which $200$ sampled points are perturbed by an error term $\varepsilon \sim \mathcal{N}(0, \sigma I)$, serving to mimic measurement inaccuracies. By varying the standard deviation $\sigma$, we can evaluate the impact of different noise intensities on the topological features. The second model is uniform noise. In this scenario, alongside $100$ points sampled from the underlying space, we introduce additional points sampled from a uniform distribution over the bounding rectangle $[-1.75, 1.75] \times [-1, 1]$. This distribution represents random outliers and faulty samples that are fundamentally irrelevant to the topological analysis. The number of these supplementary points is set to $100r$, where the parameter $r \in [0, 1]$ dictates the intensity of the noise and is systematically varied throughout the experiments. 


Under Gaussian noise (Figure~\ref{fig:noisy_circles_total}(A)), the TSI decreases rapidly toward zero, indicating a loss of variability in persistence lifetimes as dominant features are destroyed. In contrast, persistent entropy increases monotonically, reflecting a more uniform distribution of persistence mass across a multitude of short-lived features. The behavior of $cv\tsi$ is more nuanced. As noise increases, $cv\tsi$ initially rises, reaching a peak when the persistence diagram contains a mixture of prominent and short-lived features, and subsequently decreases as the diagram becomes dominated by uniformly small lifetimes. This non-monotone behavior highlights the sensitivity of $cv\tsi$ to intermediate regimes where both structure and noise coexist. By utilizing this metric in conjunction with the TSigI, we can infer several intrinsic properties of the point cloud. The TSigI is initially high due to a predominance of prominent features, and it decreases with added noise owing to the increasing number of short-lived features. Consequently, under low noise conditions, a high TSigI paired with a low $cv\tsi$ indicates minimal variance among the prominent features. At intermediate noise levels, a diminished TSigI coupled with a high $cv\tsi$ suggests a distinct split between prominent and short-lived features. Finally, at high noise levels, concurrently low values of both the TSigI and $cv\tsi$ denote a topological landscape composed almost entirely of short-lived features.

Under uniform noise (Figure~\ref{fig:noisy_circles_total}(B)), similar qualitative trends are observed, but the presence of outliers introduces additional spurious features. In this setting, the TSI again decreases, while persistent entropy increases more sharply due to the broader spread of persistence values and a corresponding increase in the total number of bars. The normalized index $cv\tsi$ stabilizes at an intermediate level, reflecting a balance between structured and unstructured topological contributions. The TSigI follows a trajectory similar to that observed under Gaussian noise, again signaling the presence of prominent features at low noise levels and short-lived features at higher intensities.

\begin{figure}[H]
    \centering
    \begin{subfigure}[t]{0.49\textwidth}
        \centering
        \includegraphics[height=2in]{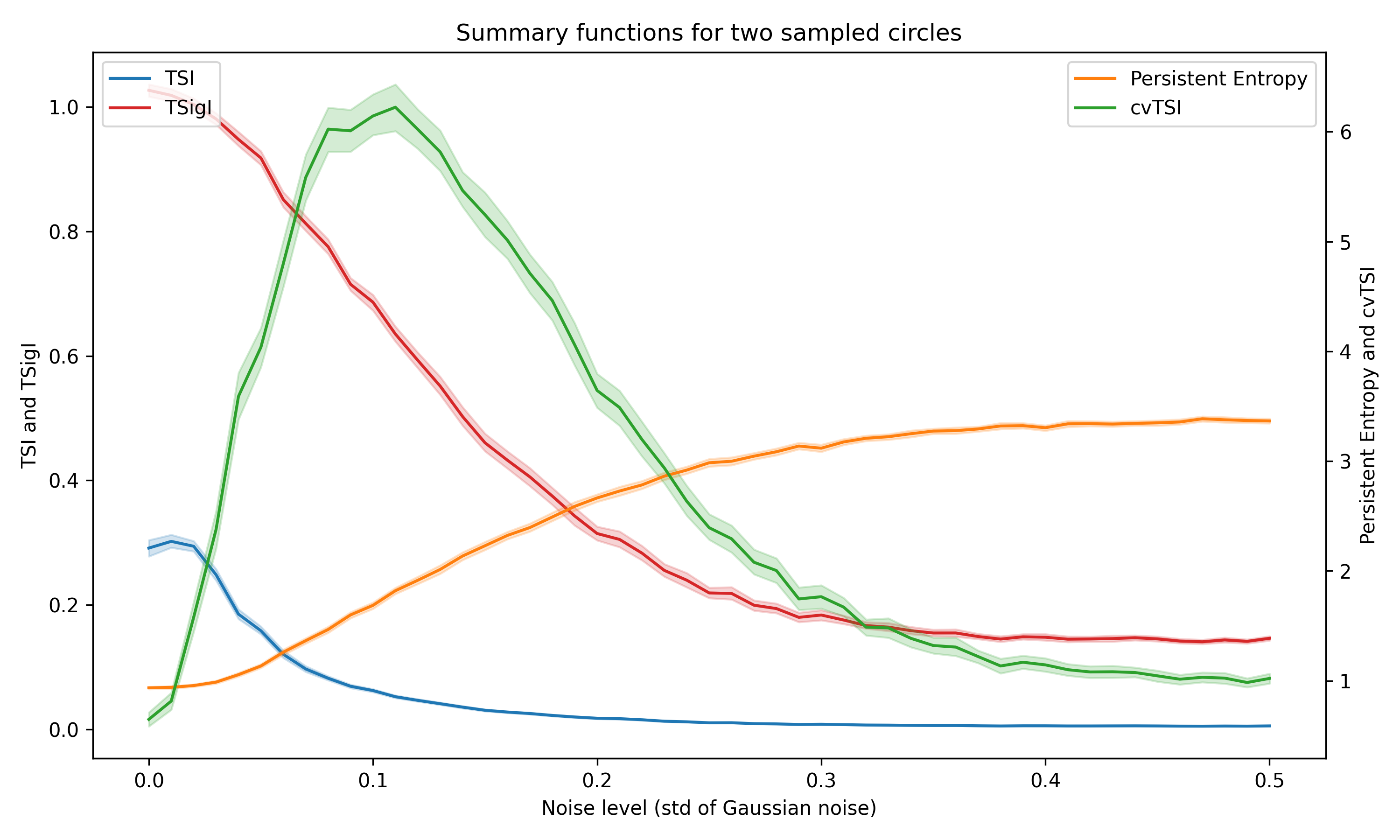}
        \caption{Gaussian noise.}
    \end{subfigure}
    ~
    \begin{subfigure}[t]{0.49\textwidth}
        \centering
        \includegraphics[height=2in]{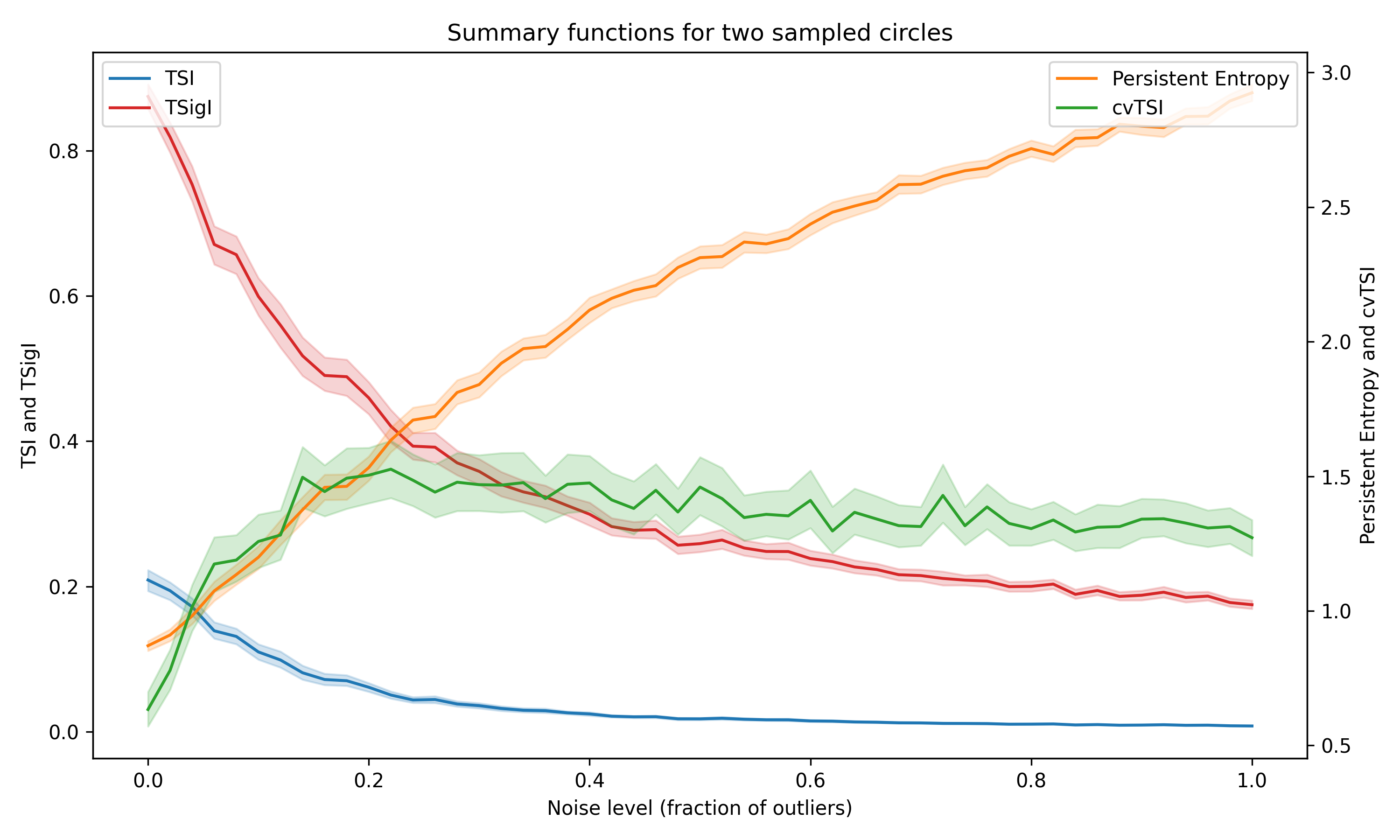}
        \caption{Uniform noise.}
    \end{subfigure}
    \caption{Effect of noise on TSI, TSigI, entropy, and $cv\tsi$.}
    \label{fig:noisy_circles_total}
\end{figure}


\subsection{Time series: geometric Brownian motion}

We now consider synthetic time series generated by geometric Brownian motion (GBM). Using Takens embedding (the interested readers is referred to \cite{schilling2014brownian, skraba_topological_2012}), we reconstruct a point cloud from each time series and compute the associated persistence diagrams (for an illustration see Figures~\ref{fig:gbm_total1} and \ref{fig:gbm_total2}). 

\begin{figure}[H]
    \centering
    \begin{subfigure}[t]{0.32\textwidth}
    \centering
        \includegraphics[height=1.5in]{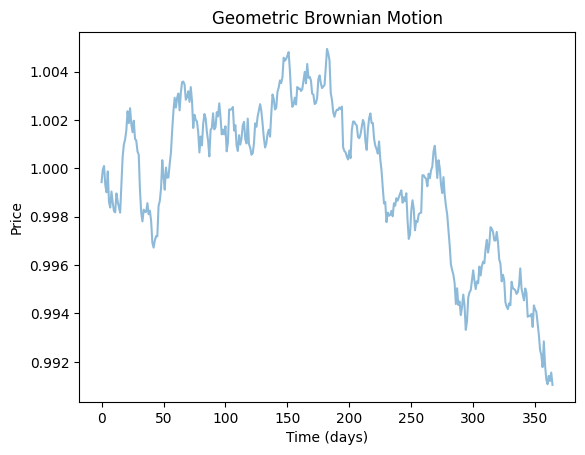}
        \caption{\label{fig:gbm_example} Realization of geometric Brownian motion.}
    \end{subfigure}
    ~
    \begin{subfigure}[t]{0.32\textwidth}
    \centering
        \includegraphics[height=1.5in]{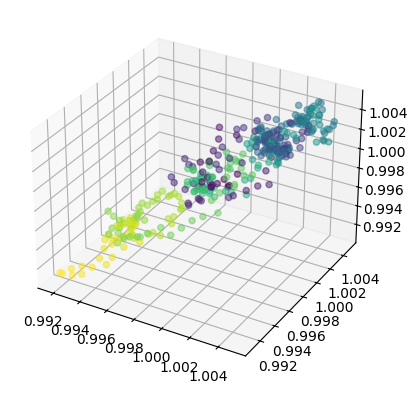}
        \caption{\label{fig:gbm_exampleTaken} Takens embedding.}
    \end{subfigure}
    ~
    \begin{subfigure}[t]{0.32\textwidth}
        \centering
        \includegraphics[height=1.5in]{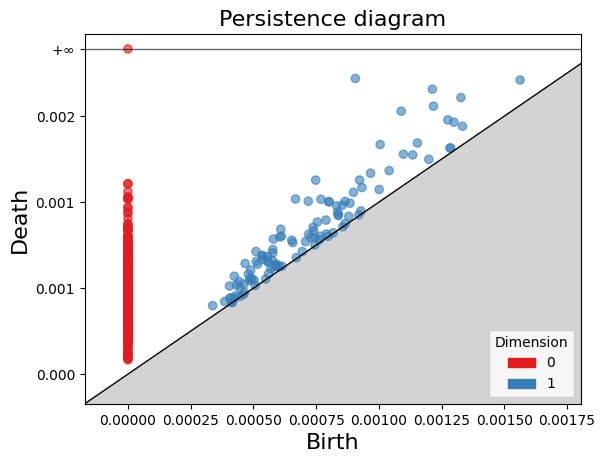}
        \caption{Persistence diagram.}
    \end{subfigure}
    \caption{(A) A simulated example of geometric Brownian motion with drift and volatility $(\mu,\sigma)=(0,0.01)$ (B) 3 dimensional Takens embedding of the curve with delay parameter $\tau=3$. (C) Corresponding persistence diagram of Alpha filtration.}
    \label{fig:gbm_total1}
\end{figure}

\begin{figure}[H]
\centering
    \begin{subfigure}[t]{0.42\textwidth}
        \centering
        \includegraphics[height=1.5in]{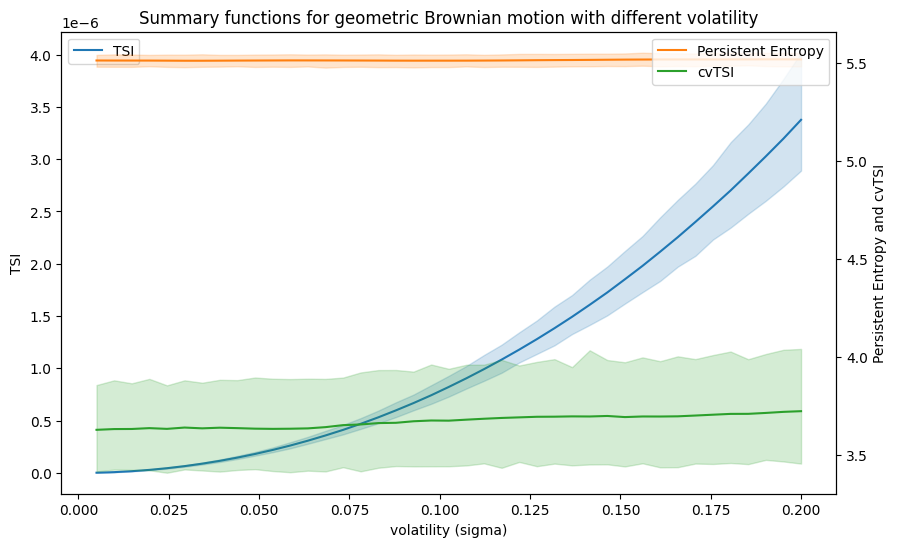}
        \caption{\label{fig:gbm_volatility} Different amount of volatility for zero drift.}
    \end{subfigure}
    ~
    \begin{subfigure}[t]{0.42\textwidth}
        \centering
        \includegraphics[height=1.5in]{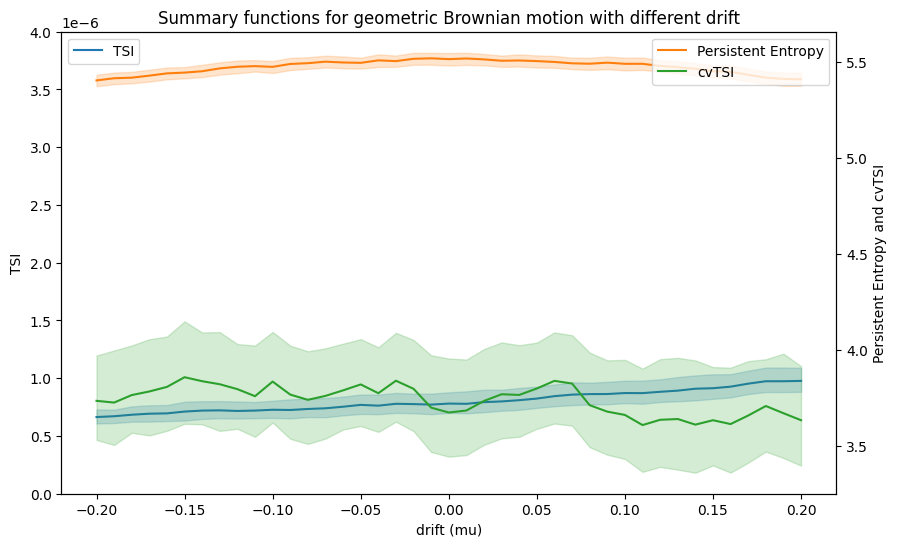}
        \caption{\label{fig:gbm_drift} Different amount of drift for 0.1 volatility.}
    \end{subfigure}
    \caption{TSI, persistent entropy, and $cv\tsi$ as functions of volatility (A) and drift (B). Each value is averaged over 100 Monte Carlo simulations.}
    \label{fig:gbm_total2}
\end{figure}

The persistence diagrams of GBM exhibit a large number of short-lived features, reflecting the absence of strong underlying topological structure. This is consistent with the stochastic and memoryless nature of the process.

To assess sensitivity to model parameters, we vary both the drift $\mu$ and the volatility $\sigma$. The results reveal a clear contrast between these two effects. When varying the drift, all three summary statistics remain largely stable across the parameter range. In particular, the TSI exhibits only minor variation, indicating that it is primarily driven by fluctuations rather than deterministic trends. Persistent entropy displays similarly weak, but significant dependence on $\mu$, while $cv\tsi$ shows higher variability but no systematic trend.

In contrast, increasing volatility leads to noticeable changes in all summary statistics. The TSI increases with volatility, reflecting the growing dispersion in persistence lifetimes induced by stronger stochastic fluctuations. Persistent entropy also increases, capturing the more uniform distribution of persistence mass across features, while $cv\tsi$ exhibits amplified variability due to its normalization.

These findings highlight a key distinction: persistence-based summaries are largely insensitive to drift in GBM, but respond strongly to volatility. In particular, the TSI captures the magnitude of stochastic fluctuations rather than deterministic components of the dynamics, reinforcing its interpretation as a measure of structural dispersion.


\section{Discussion and Future Work}\label{Sec:Discussion and Future Work}
The Topological Stability Index (TSI) provides a new scalar summary of persistence barcodes that complements existing entropy-based approaches. Together with the Topological Signal Index (TSigI), which captures the typical scale of persistence lifetimes, it forms a two-dimensional summary of persistence barcodes, encoding both dispersion and magnitude.

The normalized version of $\tsi$, $cv\tsi$, establishes a precise connection to information-theoretic quantities through its exact relation to Rényi entropy of order two, showing that variance-based and entropy-based summaries are closely linked but emphasize different aspects of the data. Furthermore, while persistent entropy captures the relative distribution of persistence lifetimes, the $cv\tsi$ measures their absolute dispersion, and TSigI captures their characteristic scale, making the pair particularly sensitive to dominant features and structural variability. 

Despite these advantages, several limitations should be acknowledged. Both $cv\tsi$ and TSigI are sensitive to changes in the number of bars, especially under the insertion of short-lived features, and therefore depend on the quality and stability of the underlying persistence diagram. Moreover, as low-dimensional summaries, they compress the full topological information into a small number of values and do not capture the evolution of features across the filtration.

These limitations point to several directions for future research. A natural extension is the development of functional analogues of the TSI and TSigI within the persistence-curve framework, enabling time-resolved analysis of both structural variability and signal strength. From a statistical perspective, it would be valuable to study the asymptotic behavior of these quantities under sampling and noise, as well as their use in hypothesis testing and inference. Further applications in time series analysis, clustering, and anomaly detection may also clarify their practical strengths and limitations.

\printbibliography

@book{schilling2014brownian,
  title={Brownian motion: an introduction to stochastic processes},
  author={Schilling, Ren{\'e} L and Partzsch, Lothar},
  year={2014},
  publisher={Walter de Gruyter GmbH \& Co KG}
}

@article{zomorodian_computing_2005,
	title = {Computing {Persistent} {Homology}},
	volume = {33},
	issn = {1432-0444},
	url = {https://doi.org/10.1007/s00454-004-1146-y},
	doi = {10.1007/s00454-004-1146-y},
	language = {en},
	number = {2},
	journal = {Discrete \& Computational Geometry},
	author = {Zomorodian, Afra and Carlsson, Gunnar},
	month = feb,
	year = {2005},
	pages = {249--274},
}

@article{chazal_robust_2018,
    title = {Robust {Topological} {Inference}: {Distance} {To} a {Measure} and {Kernel} {Distance}},
    volume = {18},
    issn = {1533-7928},
    shorttitle = {Robust {Topological} {Inference}},
    url = {http://jmlr.org/papers/v18/15-484.html},
    number = {159},
    journal = {Journal of Machine Learning Research},
    author = {Chazal, Frédéric and Fasy, Brittany and Lecci, Fabrizio and Michel, Bertrand and Rinaldo, Alessandro and Wasserman, Larry},
    year = {2018},
    pages = {1--40},
}

@article{robinson_hypothesis_2017,
    title = {Hypothesis testing for topological data analysis},
    volume = {1},
    issn = {2367-1734},
    url = {https://doi.org/10.1007/s41468-017-0008-7},
    doi = {10.1007/s41468-017-0008-7},
    language = {en},
    number = {2},
    journal = {Journal of Applied and Computational Topology},
    author = {Robinson, Andrew and Turner, Katharine},
    month = dec,
    year = {2017},
    pages = {241--261},
}

@article{atienza_stability_2020,
    title = {On the stability of persistent entropy and new summary functions for topological data analysis},
    volume = {107},
    issn = {0031-3203},
    url = {https://www.sciencedirect.com/science/article/pii/S0031320320303125},
    doi = {10.1016/j.patcog.2020.107509},
    journal = {Pattern Recognition},
    author = {Atienza, Nieves and Gonzalez-Díaz, Rocio and Soriano-Trigueros, Manuel},
    month = nov,
    year = {2020},
    pages = {107509},
}

@article{atienza_persistent_2019,
    title = {Persistent entropy for separating topological features from noise in vietoris-rips complexes},
    volume = {52},
    issn = {1573-7675},
    url = {https://doi.org/10.1007/s10844-017-0473-4},
    doi = {10.1007/s10844-017-0473-4},
    language = {en},
    number = {3},
    journal = {Journal of Intelligent Information Systems},
    author = {Atienza, Nieves and Gonzalez-Diaz, Rocio and Rucco, Matteo},
    month = jun,
    year = {2019},
    pages = {637--655},
}

@article{chung_persistence_2022,
    title = {Persistence {Curves}: {A} canonical framework for summarizing persistence diagrams},
    volume = {48},
    issn = {1572-9044},
    shorttitle = {Persistence {Curves}},
    url = {https://doi.org/10.1007/s10444-021-09893-4},
    doi = {10.1007/s10444-021-09893-4},
    language = {en},
    number = {1},
    journal = {Advances in Computational Mathematics},
    author = {Chung, Yu-Min and Lawson, Austin},
    month = jan,
    year = {2022},
    pages = {6},
}

@article{bhatia_better_2000,
    title = {A {Better} {Bound} on the {Variance}},
    volume = {107},
    issn = {0002-9890},
    url = {https://www.jstor.org/stable/2589180},
    doi = {10.2307/2589180},
    number = {4},
    journal = {The American Mathematical Monthly},
    publisher = {[Taylor \& Francis, Ltd., Mathematical Association of America]},
    author = {Bhatia, Rajendra and Davis, Chandler},
    year = {2000},
    pages = {353--357},
}

@inproceedings{rucco_characterisation_2016,
    address = {Cham},
    title = {Characterisation of the {Idiotypic} {Immune} {Network} {Through} {Persistent} {Entropy}},
    isbn = {978-3-319-29228-1},
    doi = {10.1007/978-3-319-29228-1_11},
    language = {en},
    booktitle = {Proceedings of {ECCS} 2014},
    publisher = {Springer International Publishing},
    author = {Rucco, Matteo and Castiglione, Filippo and Merelli, Emanuela and Pettini, Marco},
    editor = {Battiston, Stefano and De Pellegrini, Francesco and Caldarelli, Guido and Merelli, Emanuela},
    year = {2016},
    pages = {117--128},
}

@article{fasy_confidence_2014,
    title = {Confidence sets for persistence diagrams},
    volume = {42},
    issn = {0090-5364, 2168-8966},
    url = {https://projecteuclid.org/journals/annals-of-statistics/volume-42/issue-6/Confidence-sets-for-persistence-diagrams/10.1214/14-AOS1252.full},
    doi = {10.1214/14-AOS1252},
    number = {6},
    journal = {The Annals of Statistics},
    publisher = {Institute of Mathematical Statistics},
    author = {Fasy, Brittany Terese and Lecci, Fabrizio and Rinaldo, Alessandro and Wasserman, Larry and Balakrishnan, Sivaraman and Singh, Aarti},
    month = dec,
    year = {2014},
    pages = {2301--2339},
}

@article{hiraoka_limit_2018,
    title = {Limit theorems for persistence diagrams},
    volume = {28},
    issn = {1050-5164, 2168-8737},
    url = {https://projecteuclid.org/journals/annals-of-applied-probability/volume-28/issue-5/Limit-theorems-for-persistence-diagrams/10.1214/17-AAP1371.full},
    doi = {10.1214/17-AAP1371},
    number = {5},
    journal = {The Annals of Applied Probability},
    publisher = {Institute of Mathematical Statistics},
    author = {Hiraoka, Yasuaki and Shirai, Tomoyuki and Trinh, Khanh Duy},
    month = oct,
    year = {2018},
    pages = {2740--2780},
}

@article{mileyko_probability_2011,
    title = {Probability measures on the space of persistence diagrams},
    volume = {27},
    issn = {0266-5611},
    url = {https://doi.org/10.1088/0266-5611/27/12/124007},
    doi = {10.1088/0266-5611/27/12/124007},
    language = {en},
    number = {12},
    journal = {Inverse Problems},
    author = {Mileyko, Yuriy and Mukherjee, Sayan and Harer, John},
    month = nov,
    year = {2011},
    pages = {124007},
}

@article{ali_survey_2023,
    title = {A {Survey} of {Vectorization} {Methods} in {Topological} {Data} {Analysis}},
    volume = {45},
    issn = {1939-3539},
    url = {https://ieeexplore.ieee.org/abstract/document/10235748},
    doi = {10.1109/TPAMI.2023.3308391},
    number = {12},
    journal = {IEEE Transactions on Pattern Analysis and Machine Intelligence},
    author = {Ali, Dashti and Asaad, Aras and Jimenez, Maria-Jose and Nanda, Vidit and Paluzo-Hidalgo, Eduardo and Soriano-Trigueros, Manuel},
    month = dec,
    year = {2023},
    pages = {14069--14080},
}

@article{bubenik_statistical_2015,
    title = {Statistical {Topological} {Data} {Analysis} using {Persistence} {Landscapes}},
    volume = {16},
    issn = {1533-7928},
    url = {http://jmlr.org/papers/v16/bubenik15a.html},
    number = {3},
    journal = {Journal of Machine Learning Research},
    author = {Bubenik, Peter},
    year = {2015},
    pages = {77--102},
}

@inproceedings{anai_dtm-based_2020,
    address = {Cham},
    title = {{DTM}-{Based} {Filtrations}},
    isbn = {978-3-030-43408-3},
    doi = {10.1007/978-3-030-43408-3_2},
    language = {en},
    booktitle = {Topological {Data} {Analysis}},
    publisher = {Springer International Publishing},
    author = {Anai, Hirokazu and Chazal, Frédéric and Glisse, Marc and Ike, Yuichi and Inakoshi, Hiroya and Tinarrage, Raphaël and Umeda, Yuhei},
    editor = {Baas, Nils A. and Carlsson, Gunnar E. and Quick, Gereon and Szymik, Markus and Thaule, Marius},
    year = {2020},
    pages = {33--66},
}

@incollection{renyi1961,
  author    = {Alfr{\'e}d R{\'e}nyi},
  title     = {On measures of entropy and information},
  booktitle = {Proceedings of the Fourth Berkeley Symposium on Mathematical Statistics and Probability, Volume 1: Contributions to the Theory of Statistics},
  publisher = {University of California Press},
  address   = {Berkeley, CA},
  year      = {1961},
  pages     = {547--561}
}

@article{diamantis_shape_2025,
    title = {The Shape of Data: Topology Meets Analytics
A Practical Introduction to Topological Analytics and the Stability Index (TSI) in Business},
    language = {en},
    author = {Diamantis, Ioannis},
    year = {2025},
    eprint = {2511.13503},
    archivePrefix = {arXiv},
    primaryClass = {stat.ML},
    note = {Preprint},
}

@article{Mileyko_2011,
doi = {10.1088/0266-5611/27/12/124007},
url = {https://doi.org/10.1088/0266-5611/27/12/124007},
year = {2011},
month = {},
publisher = {},
volume = {27},
number = {12},
pages = {124007},
author = {Mileyko, Yuriy and Mukherjee, Sayan and Harer, John},
title = {Probability measures on the space of persistence diagrams},
journal = {Inverse Problems}
}

@article{cohen-steiner_stability_2007,
    title = {Stability of Persistence Diagrams},
    volume = {37},
    issn = {0179-5376},
    url = {https://doi.org/10.1007/s00454-006-1276-5},
    doi = {10.1007/s00454-006-1276-5},
    journal = {Discrete \& Computational Geometry},
    author = {Cohen-Steiner, David and Edelsbrunner, Herbert and Harer, John},
    year = {2007},
    number = {1},
    pages = {103--120},
}

@inproceedings{skraba_topological_2012,
	title = {Topological {Analysis} of {Recurrent} {Systems}},
	url = {https://urn.kb.se/resolve?urn=urn:nbn:se:kth:diva-107210},
	language = {eng},
	author = {Skraba, Primoz and de Silva, Vin and Vejdemo-Johansson, Mikael},
	year = {2012},
	pages = {1--5},
    booktitle = {NIPS 2012 Workshop on Algebraic Topology and Machine Learning}
}

@article{LEHMER1971183,
title = {On the compounding of certain means},
journal = {Journal of Mathematical Analysis and Applications},
volume = {36},
number = {1},
pages = {183-200},
year = {1971},
issn = {0022-247X},
doi = {https://doi.org/10.1016/0022-247X(71)90029-1},
url = {https://www.sciencedirect.com/science/article/pii/0022247X71900291},
author = {D.H Lehmer}
}
\end{document}